\documentclass[final]{amsart}
\usepackage{amsfonts,amssymb,amsmath,amsgen,amsthm,datetime}
\usepackage{hyperref}
\usepackage{mathrsfs}
\usepackage{placeins}
\usepackage[top=0.75in, bottom=0.85in, scale=0.7, centering]{geometry}
\usepackage{enumitem}
\usepackage{latexsym,amssymb}
\usepackage{graphicx}
\usepackage{tikz}      % Pour créer des diagrammes
\usetikzlibrary{positioning,calc}
\usepackage{xcolor}
\usepackage{etoolbox}
\pagestyle{plain}

\hypersetup{
    colorlinks=true,      % Enable colored links
    linkcolor=blue,       % Color for internal links
    citecolor=red        % Color for citation links
}

\theoremstyle{plain}
\newtheorem{theorem}{Theorem}[section]
\newtheorem{defi}[theorem]{Definition}
\newtheorem{assumption}[theorem]{Assumption}
\newtheorem{lemma}[theorem]{Lemma}
\newtheorem{corollary}[theorem]{Corollary}
\newtheorem{prop}[theorem]{Proposition}

\theoremstyle{remark}
\newtheorem{remark}[theorem]{Remark}
\newtheorem{notation}[theorem]{Notation}

\newcommand{\ds}{\displaystyle}
\newcommand{\N}{\mathbb{N}}
\newcommand{\R}{\mathbb{R}}

\newcommand{\C}{\mathbb{C}}
\newcommand{\Lp}[3]{L^{#1}(#2,#3)}
\newcommand{\schwartz}[1]{\mathscr{S}(#1)}

\newcommand{\sigmakeps}[3]{\Sigma^{#1}_{\varepsilon}(#2,#3)}

\newcommand{\norm}[2]{\| #1 \|_{#2}}
\newcommand{\normLp}[3]{\| #1 \|_{L^{#2}(#3)}}
\newcommand{\transpose}[1]{#1^{\top}}
\newcommand{\Argsh}[1]{\mathrm{Argsh}\Big(#1\Big)}
\newcommand{\argsh}[1]{\mathrm{Argsh}(#1)}
\newcommand{\dint}{\mathrm{d}}
\newcommand{\diff}{\mathrm{d}}

\newcommand{\ps}[2]{#1 \cdot #2 }
\newcommand{\sgn}{\mathrm{sgn}}
\newcommand{\abs}[1]{\lvert #1 \lvert}

\setcounter{tocdepth}{2}

\begin{document}
\title{Propagation of wave packets close to conical intersections}

\author[M. Curely]{Marianne Curely}
\address[M. Curely]{Univ Angers, CNRS, LAREMA, SFR MATHSTIC, F-49000 Angers, France}
\email{marianne.curely@univ-angers.fr}

\begin{abstract} In this paper, we study the propagation of wave packets close to conical intersections with respect to a system of two Schrödinger equations presenting a codimension $2$ crossing. We focus on the dynamics that occur when the wave packets pass through an area close to the crossing, and our main results provide an explicit formula for the outgoing wave packet in terms of the incoming one, with a complete description of its phase and of the classical trajectories it follows, including a drift.
\end{abstract}

\maketitle

\noindent \textit{\textbf{Keywords:} semiclassical analysis, matrix-valued Hamiltonian, codimension $2$ crossing, wave}

\noindent \textit{packets propagation.}\\

\noindent \textit{\textbf{2020 Mathematics Subject Classification:}} $35$B$40$, $35$C$20$, $35$Q$40$, $35$Q$41$.

\tableofcontents
\addtocontents{toc}{\protect\setcounter{tocdepth}{2}}

%%%%%%%%%%%%%%%%%%%%%%%%%%%%%%%%%%%%%%%%%%%%%%%%%%%%%%%%%%%
\section{Introduction}
\subsection{The equation of molecular dynamics}
Let $V : \R^d \longrightarrow \C^{2 \times 2}$ be a matrix-valued function, referred to as the \textit{potential} and $t_0$ be a real number. We consider a system of two Schrödinger equations as follows
\begin{equation}
\label{equation}
    \left\{
        \begin{array}{r c l l}
        i \varepsilon \partial_t \psi^{\varepsilon}(t,x) + \ds\frac{\varepsilon^2}{2} \Delta \psi^{\varepsilon} (t,x) & = & V(x) \psi^{\varepsilon} (t,x) & \text{for all} \, \, (t,x) \in \R \times \R^d\\
        \psi^{\varepsilon} (t_0,x) & = & \psi_{0}^{\varepsilon}(x) & \text{for all} \, \, x \in \R^d
        \end{array}
    \right.
\end{equation}

\noindent where $0 < \varepsilon \ll 1$ is called the \textit{semiclassical parameter}, $\psi^{\varepsilon} : \R \times \R^d \longrightarrow \C^2$ is the \textit{wave function} we are looking for and $\psi_{0}^{\varepsilon} : \R^d \longrightarrow \C^2$ is a bounded family of $\Lp{2}{\R^d}{\C^2}$.

\subsubsection{On the existence of solutions}

\noindent In our study, we assume that $V$ belongs to $\mathcal{C}^{\infty}(\R^d)$ and that for all $x \in \R^d$, $V(x)$ is a real-valued symmetric matrix. Then, there exist three smooth functions $v, w_1, w_2$ such that for all $x \in \R^d$
\begin{equation*}
\label{matV}
    V(x) = v(x) I_2 + \begin{pmatrix}
        w_1(x) & w_2(x) \\
        w_2(x) & -w_1(x)
    \end{pmatrix} = v(x) I_2 + A(w(x))
\end{equation*}

\noindent using the notations introduced in the Appendix \ref{appendixA} and the vector $w = (w_1 , w_2)$. For all $x \in \R^d$, the two eigenvalues of $V(x)$ are
\begin{equation*}
\label{eigenvalues}
	\lambda_{\pm}(x) = v(x) \pm \lvert w(x) \lvert, \quad \text{where} \, \, \lvert w(x) \lvert \, = \sqrt{w_1^2(x) + w_2^2(x)}.
\end{equation*}

\noindent We make the following assumptions on the growth of the potential and the gap between its two eigenvalues.

\begin{assumption}[Growth conditions]
\label{hypsurV2}
\noindent
\begin{enumerate}
	\item The function $v$ is at most quadratic. For all $\gamma \in \N^d$ such that $\lvert \gamma \lvert \, \geqslant 2$, $\partial_x^{\gamma} v \in \Lp{\infty}{\R^d}{\R}$, that is 
\begin{equation*}
\exists \, c_{\gamma} > 0, \, \underset{x \in \R^d}{\mathrm{sup}} \lvert \partial_x^{\gamma} v(x) \lvert \, \leqslant c_{\gamma}.
\end{equation*}	
    \item For $i \in \lbrace 1,2 \rbrace$, for all $\gamma \in \N^d$ such that $\lvert \gamma \lvert \, \geqslant 1$, $\partial_x^{\gamma} w_i \in \Lp{\infty}{\R^d}{\R}$, that is
\begin{equation*}
\exists \, d_{\gamma} > 0, \underset{x \in \R^d}{\mathrm{sup}} \lvert \partial_x^{\gamma} w_i(x) \lvert \, \leqslant d_{\gamma}.
\end{equation*}
    \item The difference $\lambda_{+} - \lambda_{-}$ is supposed to satisfy a polynomial gap condition at infinity: there exist constants $c_0, n_0, r_0 > 0$ such that
    \begin{equation*}
        | \lambda_{+}(x) - \lambda_{-}(x) | \leqslant c_0 \langle x \rangle^{-n_0}, \quad \text{when } |w(x)| \geqslant r_0.
    \end{equation*}
\end{enumerate}
\end{assumption}

\noindent Our assumptions on the potential $V$ lead us to the same setting as in \cite[Chapter $4$]{livreRobertComb2012}, implying the existence of solutions to Equation \eqref{equation} in $\Lp{2}{\R^d}{\C^2}$ and more generally in all the functional spaces $\Sigma_{\varepsilon}^{k} := \Sigma^{k}_{\varepsilon}(\R^d,\C^2)$ that contain functions $f \in \Lp{2}{\R^d}{\C^2}$ such that
\begin{equation*}
\forall \, (\alpha, \beta) \in \N^{2d}, \, \lvert \alpha \lvert + \lvert \beta \lvert \,  \leqslant k, \, x \in \R^d \mapsto x^{\alpha} \partial_x^{\beta} f(x) \in \Lp{2}{\R^d}{\C^2}
\end{equation*}
\noindent and such that the norms
\begin{equation*}
\norm{f}{\Sigma^{k}_{\varepsilon}} \, := \underset{\lvert \alpha \lvert + \lvert \beta \lvert \leqslant k}{\text{sup}} \normLp{x^{\alpha} (\varepsilon \partial_x)^{\beta} f}{2}{\R^d}
\end{equation*}

\noindent are uniformly bounded in $\varepsilon$. For all $k \in \N$, we denote by $\Sigma^k$ the sets corresponding to $\varepsilon = 1$. We note that if the initial data belongs to one of these spaces, then the solution to \eqref{equation} remains in the same space.

\subsubsection{The Born-Oppenheimer approximation}

\noindent Equation \eqref{equation} arises from molecular dynamics and is obtained under the Born-Oppenheimer approximation, first introduced in \cite{BO1927} (further details can be found in \cite[Chapter II]{bookLubich} and references therein). In summary, this approximation allows to reduce the dimension of the space variable of the studied equation. Specifically, the dimension is reduced from $3N_n + 3N_e$ to $d = 3N_n$, by decoupling the motion of the $N_n$ nuclei and the $N_e$ electrons of the particle, due to the significant mass difference between them. This mass difference is used to define the semiclassical parameter $\varepsilon$, which appears in Equation \eqref{equation}, as the square root of the ratio of the electron mass to the smaller mass of nuclei in the studied molecule.\\

\noindent A system such as \eqref{equation} results from the Born-Oppenheimer approximation in the case where the two electronic potential energy surfaces, here denoted by $\lambda_{-}(x) \leqslant \lambda_{+}(x)$, are not sufficiently separated to allow the application of an adiabatic theorem, such as those presented in \cite{TeuSpAdiab} and \cite{martinezsordoni2002}: if a sufficient separation exists, one can accurately approximate the solution of \eqref{equation} by solving two independent scalar Schrödinger equation. This paper focuses on the case where the two electronic energy levels cross, as explained in the following subsection.

\subsubsection{The problem setting} We consider the case where the electronic energy levels possess a codimension $2$ crossing. The different types of crossings for the Schrödinger equation have been classified by Hagedorn in \cite{hagedornclassification} and the one that interests us is referred to as a type ~\rm{I} crossing in Hagedorn's classification. Therefore, we study the case where the \textit{crossing set}, defined by
\begin{equation*}
\Upsilon = \lbrace (x,\xi) \in \R^{2d}, \, \lvert w(x) \lvert \, = 0 \rbrace
\end{equation*}
\noindent satisfies the following assumption.
\begin{assumption}[Assumption on $\Upsilon$]
\label{hypcrossing} For all $(x,\xi) \in \Upsilon$, 
\begin{equation*}    
    \mathrm{Rank}(\diff w(x)) = 2.
\end{equation*}  
\end{assumption}

\noindent A vector $(x,\xi) \in \Upsilon$ is called a \textit{crossing point}. The previous assumption implies that $\Upsilon$ is a smooth submanifold of $\R^{2d}$ and $w_1(x) = w_2(x) = 0$ is a system of equation of $\Upsilon$. The type of crossing described by Assumption~\ref{hypcrossing} is also known as a \textit{conical crossing}. This terminology can be justified if we look at the following example with $d = 2$. For all $x \in \mathbb{R}^2$, we define the functions $v$ and $w$ by \(v(x) = 0 \) and \(w(x) = x.\) For $x \in \mathbb{R}^2$, we have \(\lambda_{\pm}(x) = \pm |x|.\) The eigenvalue surfaces form a double cone with the apex located at \( x = (0, 0) \). In this case, the set \( \Upsilon \) is given by \[\Upsilon = \{ (0,0) \} \times \mathbb{R}^2.\]

\noindent In what follows, we denote by $\Pi_{\pm}$ the matrices of the eigenprojectors associated with $\lambda_{\pm}$, defined for all $x \in \R^{2d} \setminus \Upsilon$ by
\begin{equation}
\label{projspectraux}
\Pi_{\pm}(x) = \dfrac{1}{2} \Big(  I_2  \pm \dfrac{1}{\lvert w(x) \lvert} A(w(x)) \Big). 
\end{equation}

\subsection{Wave packets}
\label{subsec:WP}

\noindent Here, we introduce the wave packets which are functions used to construct both our initial data and the approximate solution of system \eqref{equation}. Wave packets, also called coherent states, are highly localized functions, both in terms of position and impulsion, and are outlined in the following definition.

\begin{defi}[Wave packets] Let $\varepsilon$ be a positive real number, $z = (q,p)$ be a point of $\R^{2d}$ and $\varphi$ belong to $\schwartz{\R^d}$. The wave packet associated with the point $z$ and the function $\varphi$ is the following function
\begin{equation*}
\mathrm{WP}_{z}^{\varepsilon} \varphi : x \in \R^d \mapsto \varepsilon^{-\frac{d}{4}} e^{\frac{i}{\varepsilon} p \cdot (x - q)} \varphi \Big( \dfrac{x - q}{\sqrt{\varepsilon}} \Big).
\end{equation*}
 $z$ is called the center and $\varphi$ the profile of the wave packet.
\end{defi}

\noindent For all $\varepsilon > 0$, $z \in \R^{2d}$ and $\varphi \in \schwartz{\R^d}$, $\mathrm{WP}_{z}^{\varepsilon} \varphi$ belongs to all spaces $\Sigma_{\varepsilon}^{k}$ for any $k \in \N$ and therefore to ~$\schwartz{\R^d}$. Moreover, the family $(\mathrm{WP}_{z}^{\varepsilon}\varphi)_{\varepsilon > 0}$ is uniformly bounded in all spaces $\Sigma_{\varepsilon}^{k}$. These functions have been extensively studied, in particular in \cite{livreRobertComb2012} and \cite{Hagedorn1980}.\\

\noindent We recall that the initial data and the solution of the system \eqref{equation} are vectors. Since wave packets are scalar-valued functions, they must be combined with a vector to produce vector-valued functions. This vector is referred to as the \textit{direction} of the wave packet. As we will see in the following, the wave packets we construct are focalised along eigenvectors of the matrix $V$. Let us explain how we construct the initial wave packet considered in our study.

\begin{assumption}[Assumption on the initial data] \label{hypsurID}
    For all $x \in \R^d$, the initial data $\psi_{0}^{\varepsilon}$ of the system \eqref{equation} is given by
    \begin{equation*}
        \psi_{0}^{\varepsilon}(x) = \Vec{Y}_{0} \, \mathrm{WP}_{z_0}^{\varepsilon} \varphi(x)
    \end{equation*}
where $\mathrm{WP}_{z_0}^{\varepsilon}$ is a wave packet associated with a point $z_0 = (q_0,p_0) \in \R^{2d} \setminus \Upsilon$, a function $\varphi \in \schwartz{\R^d}$ and a normalized eigenvector $\Vec{Y}_{0} \in \R^2$ of $V(q_0)$ for the minus-mode ($V(q_0) \Vec{Y}_{0} = \lambda_{-}(q_0) \Vec{Y}_{0}$).
\end{assumption}

\begin{remark} The choice of minus-mode for the initial data is arbitrary. The corresponding details for the case where we start from the plus-mode are provided in Appendix~\ref{appendix:autremode}. If we choose to start from the two modes, solutions can be superposed by linearity of Equation \eqref{equation}.
\end{remark}

\noindent In this paper, our objective is to study the behavior of wave packets with respect to Equation~\eqref{equation}. Specifically, this means the following: if the initial data is defined using a wave packet (as in Assumption \ref{hypsurID}), is it possible to find a good approximate solution that remains dependent on wave packets in the same manner? Can we link the parameters of this wave packet to those of the initial one? The main purpose of this article is to study how wave packets behave with respect to Equation ~\eqref{equation}, when their centers ``reach''  a point close to the crossing set, in a sense that will be clarified in Subsection \ref{subsec:neighCS}.

\subsection{Classical quantities}
\label{subsec:parameters} First, we introduce all the quantities required to construct the approximate solution. The evolution of the parameters of this approximation is described by classical mechanics and a non-semiclassical Schrödinger equation.

\subsubsection{Classical trajectories}

Let $t_0$ be in $\mathbb{R}$ and $z_0 = (q_0,p_0)$ be a point of $\mathbb{R}^{2d} \setminus \Upsilon$. We consider the classical trajectories at time $t \in \R$, $z_{\pm}(t) = (q_{\pm}(t), p_{\pm}(t)) \in \R^{2d}$, issued from $z_0$ at time $t_0$. The maps $z_{\pm}$ are solutions to the following ordinary differential equations
\begin{equation}
    \label{eqclassicaltraj}
    \left\lbrace 
    \begin{array}{cll}
        \dot{q}_{\pm} & = & p_{\pm} \\
        \dot{p}_{\pm} & = & -\nabla \lambda_{\pm}(q_{\pm})\\
        (q_{\pm}(t_0),p_{\pm}(t_0)) & = & z_0.
    \end{array}
    \right.
\end{equation}

\noindent We consider $\mathrm{I}$ an interval of $\R$ that contains $t_0$. Thanks to Cauchy-Lipschitz theorem and Assumptions \ref{hypsurV2}, as soon as $\mathrm{I}$ is such that for all $t \in \mathrm{I}$, $(q_{\pm}(t), p_{\pm}(t)) \in \mathbb{R}^{2d} \setminus \Upsilon$, the system \eqref{eqclassicaltraj} has a unique solution $(q_{\pm}, p_{\pm}) \in \mathcal{C}^{\infty}(\mathrm{I})$. The solutions of \eqref{eqclassicaltraj} are associated with the flow maps $\Phi_{\pm}^{t,t_0}(z_0) = z_{\pm}(t)$, defined for $t \in \mathrm{I}$. Thus, system \eqref{eqclassicaltraj} can be rewritten as follows
\begin{equation}
\label{eqflot}
    \left\lbrace 
    \begin{array}{cll}
        \partial_t  \Phi_{\pm}^{t,t_0}(z_0) & = & J \nabla_z h_{\pm} (\Phi_{\pm}^{t,t_0}(z_0)), \quad t \in \mathrm{I}\\
        \Phi_{\pm}^{t_0,t_0}(z_0) & = & z_0,
    \end{array}
    \right.
\end{equation}

\noindent where $J = \begin{pmatrix}
            0 & I_d \\
            - I_d & 0
        \end{pmatrix} \in \R^{2d \times 2d}$, $\nabla_z = \begin{pmatrix}
            \nabla_x \\
            \nabla_\xi 
        \end{pmatrix} \in \R^{2d}$ and $h_{\pm}$ are the scalar Hamiltonians associated with the eigenvalues $\lambda_{\pm}$, defined by
\begin{equation}
\label{eq:scalarhamiltonien}
        h_{\pm}(z) = \dfrac{\lvert \xi \lvert^2}{2} + \lambda_{\pm}(x), \quad z = (x, \xi).
\end{equation}
        
\subsubsection{Actions}
\noindent We associate with the classical trajectories the actions defined for all $t \in \mathrm{I}$ by
\begin{equation}
\label{Actions}
S_{\pm}(t;t_0,z_0) = \ds\int_{t_0}^{t} \lvert p_{\pm}(s) \lvert^2 - h_{\pm}(z_{\pm}(s))) \,  \diff s.
\end{equation}

\subsubsection{Real-valued time-dependent eigenvectors along the trajectory}

\noindent In order to determine the time-dependent eigenvectors $\Vec{Y}_{\pm}(t)$ guiding the approximate solution, we introduce the matrix-valued function $B_{\pm} \in \mathcal{C}^{\infty}(\R^{2d} \setminus \Upsilon)$, defined for all $(x,\xi) \in \R^{2d} \setminus \Upsilon$ by
\begin{equation}
\label{matB}
    B_{\pm}(x,\xi) = \pm \Pi_{\mp}(x) \xi \cdot \nabla_x \Pi_{+}(x) \Pi_{\pm}(x).
\end{equation}

\noindent We consider the normalized vectors $\Vec{Y}_{\pm}$ which are solutions to the differential equation
\begin{equation}
\label{eqVec}
    \partial_t  \Vec{Y}_{\pm}(t) =  B_{\pm}(q_{\pm}(t), p_{\pm}(t)) \Vec{Y}_{\pm}(t), \quad t \in \mathrm{I}
\end{equation}

\noindent where $\Vec{Y}_{\pm}(t_0) =  \Vec{Y}_{0}$ with $\Vec{Y}_{0}$ is defined in Assumption \ref{hypsurID}. Under the same assumptions as above on the interval $\mathrm{I}$, this system admits a unique solution. More precise results related to these vectors and the analysis of their behavior when the classical trajectories approach the crossing set are detailed in Subsection \ref{subsec:studyVec}. In particular, we will prove that \(\vec{Y}_{\pm}\) is an eigenvector of the matrix \(V(q_{\pm})\). 

\subsubsection{Profile equations}

\noindent The profiles of the approximate solutions are constructed from the classical trajectories $(q_{\pm}, p_{\pm})$ on the time interval where they are defined and the solutions of the Schrödinger equations with a time-dependent harmonic potential
\begin{equation}
\label{eqProfil}
    i \partial_t  u_{\pm} = - \dfrac{1}{2} \Delta u_{\pm} + \dfrac{1}{2} \text{Hess} \, \lambda_{\pm} (q_{\pm}) y \cdot y u_{\pm} 
\end{equation}

\noindent with an initial data in $\schwartz{\R^d}$. According to \cite{MasperoRobert17}, these equations admit solutions in $\Sigma^k$ for any $k \in \N^{*}$ as long as the classical trajectories are far enough from the crossing. The behavior of these solutions when the trajectories $(q_{\pm}, p_{\pm})$ approach the crossing set $\Upsilon$ is analyzed in Subsection \ref{subsec:consProfil}.

\subsection{Adiabatic propagation and aim of the study}

\noindent The question of the propagation of coherent states has been extensively analyzed and the construction of a leading-order approximate solution using wave packets has been thoroughly studied. The case of the scalar equation is covered in \cite{Hagedorn1980} and \cite{livreRobertComb2012}, while \cite{carles2021semi} provides details for the nonlinear case. Concerning the vector case, the propagation far enough from the crossing (in the sense that $\lvert w(q) \lvert \, > \delta$, with $\delta$ a positive real number) is well known (see \cite{livreRobertComb2012} for example). Outside the gap region, a wave packet remains approximately in the same eigenspace as the initial one and propagates along the classical trajectories associated with the same mode. In ~\cite[Theorem 1.10]{FGH2021}, we have an error estimate between the solution $\psi^{\varepsilon}$ of Equation ~\eqref{equation} and the coherent state that approaches it to leading order, which depends on the parameter ~$\delta$. Let us recall this adiabatic theorem, that is the starting point of this article.

\begin{theorem}[Adiabatic propagation] \label{theo:adiabprop} Let Assumption \ref{hypsurV2} and \ref{hypcrossing} hold. We consider $\varepsilon \in (0,1)$, $\delta > 0$ and $(t_0,T) \in \R^2$. We assume that $\psi^{\varepsilon}_0$ satisfies Assumption \ref{hypsurID} at $t_0$ and that the trajectory $t \mapsto \Phi_{-}^{t,t_0}(z_0) \in \lbrace \lvert w \lvert \, > \delta \rbrace$ for all $t \in [t_0, t_0 + T]$. For all $k \in \N$, there exists a constant $C_k > 0$ (uniform in $\delta$ and $\varepsilon$) such that
\begin{equation}
\label{eq:erreurFGH}
    \big\lvert \big\lvert \psi^{\varepsilon}(t) - \Vec{Y}_{-}(t) w_{-}^{\varepsilon}(t) \big\lvert \big\lvert_{\sigmakeps{k}{\R^d}{\C^2}} \leqslant C_k (1 + \lvert \ln \delta \lvert) \Bigg( \dfrac{\varepsilon^{\frac{3}{2}}}{\delta^4} + \dfrac{\sqrt{\varepsilon}}{\delta} \Bigg) \quad \text{for all} \, \, t \in [t_0, t_0 + T],
\end{equation}
\noindent where the vector-valued function $t \mapsto \Vec{Y}_{-}(t)$ is defined in \eqref{eqVec} with $\vec{Y}_0$ as initial condition at time $t_0$, the function $t \mapsto w_{-}^{\varepsilon}(t)$ is a wave packet $$\quad w_{-}^{\varepsilon}(t,x) = e^{\frac{i}{\varepsilon} S_{-}(t;t_0,z_0)} \mathrm{WP}_{z_{-}(t)}^{\varepsilon} u_{-}(t,x), \quad \text{for all } x \in \R^d$$ with the classical trajectory $t \mapsto z_{-}(t)$ defined in \eqref{eqclassicaltraj}, the action $t \mapsto S_{-}(t;t_0,z_0)$ introduced in \eqref{Actions} and the profile $t \mapsto u_{-}(t)$ solution of $\eqref{eqProfil}$ with $\varphi$ as initial data at time $t_0$.
\end{theorem}

\noindent A similar $\delta$-dependence of the approximation appears in \cite{FLRcodimun}, where the authors study a codimension ~$1$ crossing and provide an approximation, not only at leading order but at any order $N \in \mathbb{N}$. The remainder in their adiabatic theorem (\cite[Theorem 5.1]{FLRcodimun}) is of order \[ \left( \frac{\sqrt{\varepsilon}}{\delta} \right)^{N+1} \delta^{-2 \kappa_0}, \quad \text{with } \kappa_0 \in \mathbb{N}.\] We expect that a similar result holds in our case and we will deserve future works to this question. Thus, in this lines of ideas, the result of Theorem \ref{theo:adiabprop} shows that the remainder is small provided that the classical trajectory \( t \mapsto z_{-}(t) \) satisfies
\begin{equation}
\label{eq:validitetheoadiab}
    \inf_{t \in [t_0, t_0 + T]} \, |w(q_{-}(t))| \gg \sqrt{\varepsilon}.
\end{equation}

\noindent Otherwise, we can apply Theorem~\ref{theo:adiabprop} only far from the crossing region and we must analyze the passage through the gap, that is, the region where $|w(q)| \leqslant \delta$. The case where
\begin{equation}
\label{eq:conditionFGH}
    \inf_{t \in [t_0, t_0 + T]} \, |w(q_{-}(t))| = 0
\end{equation}
is treated in~\cite{FGH2021}. The authors prove that, when condition~\eqref{eq:validitetheoadiab} is not satisfied (because \eqref{eq:conditionFGH} holds), a new non-negligible wave packet is generated on the other mode at leading order. More precisely, if we start with a wave packet propagating in the minus-mode, after the crossing a good approximate solution is the sum of two wave packets: one propagating in the minus mode and the other in the plus mode.\\

\noindent The aim of this article is to complement the results of \cite{FGH2021}. In fact, the main motivation is to approximate the propagator associated with Equation \eqref{equation} using a decomposition into a continuous sum of Gaussian wave packets, similar to the approach in \cite{FLRcodimun}. For general initial data, such a decomposition will, for every \( \varepsilon > 0 \), contain a non-negligible number of wave packets associated with classical trajectories that neither satisfy condition \eqref{eq:validitetheoadiab} or condition \eqref{eq:conditionFGH}. Therefore, it is crucial to analyze the evolution of a wave packet centered at such a point. Then, in this paper, we provide a complete picture of the dynamics for a system of two Schrödinger equations exhibiting a codimension $2$ crossing, as described in Assumption~\ref{hypcrossing}.

\subsection{The framework}
\label{subsec:neighCS}

\noindent To construct the set of initial points, we first focus on minimizing the functions \( J_{\pm} : t \in \mathrm{I} \mapsto \dfrac{|w(q_{\pm}(t))|^2}{2} .\) For all \( t \in \mathrm{I} \), we have
    \begin{equation*}
        J_{\pm}'(t) = w(q_{\pm}(t)) \cdot \diff w(q_{\pm}(t)) p_{\pm}(t).
    \end{equation*}
\noindent Therefore, we introduce the following set
\begin{center}
$\Sigma = \lbrace (x,\xi) \in \R^{2d}, \ps{w(x)}{\diff w(x)\xi} = 0 \rbrace$.
\end{center}

\noindent Note that $\Upsilon \subset \Sigma$. We say that a point \((x, \xi) \in \Sigma\) is \textit{non-degenerate} if \[\mathrm{d}w(x)\, \xi \neq 0,\]
and we denote by \(\Sigma_{\mathrm{nd}}\) the set of non-degenerate points in \(\Sigma\). Considering non-degenerate points implies that we are dealing with generic codimension $2$ crossings according to Colin de Verdière's classification (see \cite{CdVclassification} and also \cite{FGcodim3}, where a similar classification is provided).

\begin{remark} For all $t \in \mathrm{I}$, we compute
    \begin{equation*}
        J_{\pm}''(t) = \lvert \diff w(q_{\pm}(t)) p_{\pm}(t) \lvert^2 + \begin{pmatrix}
            - \nabla \lambda_{\pm}(q_{\pm}(t)) \cdot \nabla w_1 (q_{\pm}(t)) + p_{\pm}(t) \cdot \text{Hess} \, w_1 (q_{\pm}(t)) p_{\pm}(t)\\
            - \nabla \lambda_{\pm}(q_{\pm}(t)) \cdot \nabla w_2 (q_{\pm}(t)) + p_{\pm}(t) \cdot \text{Hess} \, w_2 (q_{\pm}(t)) p_{\pm}(t)
        \end{pmatrix} \cdot w(q_{\pm}(t)).
    \end{equation*}

\noindent If $t^{\flat} \in \mathrm{I}$ is such that $J'_{\pm}(t^{\flat}) = 0$ and $w(q_{\pm}(t^{\flat}))$ is small ($\lvert w(q_{\pm}(t^{\flat})) \rvert \leqslant \delta$), the dominant term in $J_{\pm}'(t^{\flat})$ is $\lvert \diff w(q_{\pm}(t^{\flat})) p_{\pm}(t^{\flat}) \lvert^2 \, > 0$ ($z_{\pm}(t^{\flat}) \in \Sigma_{\text{nd}}$). Therefore, $J_{\pm}$ indeed attains a minimum at $t^{\flat}$.
\end{remark}

\noindent Let us introduce the set of admissible initial data for the classical trajectories \( z_{\pm} \). We consider the parameters \((M, \alpha_0,r_0,\delta) \in \R_{+}^{*} \times \R_{+} \times \R_{+}^{*} \times (0, 1] \) and a compact interval \( \mathrm{I} \subset \mathbb{R} \) with \( t_0 = \min(\mathrm{I}) \).

\begin{defi}
\label{defi:initialdata}
    A point $z_0 = (q_0,p_0) \in \R^{2d}$ belongs to the set of admissible initial data \( \mathcal{A}_{\pm}(M,\alpha_0,r_0, \delta, \mathrm{I}) \) if it satisfies the following conditions
    \begin{enumerate}
        \item $|p_0| + |q_0| \leqslant M$,
        \item there exists $t^{\flat} \in \mathrm{I}$ such that $[t^{\flat} - \delta, t^{\flat} + \delta] \subset \mathrm{I}$ and \[ \Phi_{\pm}^{t^{\flat},t_0}(z_0) := z^{\flat} = (q^{\flat}, p^{\flat}) \in \Sigma_{\mathrm{nd}} \quad \text{and} \quad \Phi_{\pm}^{t,t_0}(z_0) \notin \Sigma \text{ for } t \in \mathrm{I} \setminus \lbrace t^{\flat} \rbrace, \]
        \item the following controls hold
        \[ |w(q^{\flat})| \leqslant \alpha_0 \quad \text{and} \quad |\mathrm{d}w(q^{\flat}) p^{\flat}| > r_0. \]
    \end{enumerate}
\end{defi} 

\noindent The main result of this article (Theorem~\ref{theo:main} below) provides a good approximation, as \( \varepsilon \to 0 \), of the solution \( \psi^{\varepsilon} \) to Equation~\eqref{equation} on the time interval \( \mathrm{I} \), with the initial data at time \( t_0 \) satisfying Assumption~\ref{hypsurID} with its center \( z_0 \) belonging to \( \mathcal{A}_{-}(M,C \varepsilon^{\frac{1}{2}-\beta}, r_0, \varepsilon^{\frac{5}{14}}, \mathrm{I}) \) where \( C > 0 \). 

\subsection{Classical quantities after the crossing and main result} To state the main theorem of this article (Theorem~\ref{theo:main}), we must introduce new classical quantities, defined near and after the crossing. In fact, when we pass through a region close to the crossing, it is known that transitions between modes can occur. This effect was first described by L.~D.~Landau and C.~Zener in the early $1930$s. Their analysis shows that, after the crossing, a new wave packet can be generated on the mode opposite to the initial one. Then, the parameters of these outgoing coherent states have to be constructed and it is important to ask which initial condition to choose at time $t^{\flat}$, especially for the wave packet that has changed mode. This condition is chosen in particular to ensure that the energy of the particles that undergo the mode transition is conserved, up to an error of order \( \varepsilon \). When the initial wave packet is supported on the minus-mode, the conservation of energy implies
\begin{equation}
\label{eq:conservationNRJ}
    h_{+}(\widetilde{z}^{\flat}) = h_{-}(z^{\flat}) + \mathcal{O}(\varepsilon),
\end{equation}
\noindent
where the scalar Hamiltonians \( h_{\pm} \) are defined in~\eqref{eq:scalarhamiltonien}.\\

\noindent In theoretical chemistry, various strategies have been developed for the numerical resolution of the Schrödinger equation, leading to surface hopping algorithms (see the celebrated paper \cite{tully1971}). The hops reflect the generation of a new wave packet as a trajectory passes close to the crossing set (see~\cite{fermlass2008JCP} and ~\cite{fermlass2017CIMP}). These algorithms, based on the use of classical trajectories, require a choice of parameters that describe the approximate solution. In this paper, we explicitly compute the parameters of the outgoing wave packet. These computations provide both a \textit{drift term} for the classical trajectories and \textit{transfers coefficients} for the profiles. In this setting, the main result of this article has the objective of extending the methods mentioned just above to systems presenting a codimension $2$ crossing, in the spirit of \cite{FLRcodimun} where the codimension $1$ case is treated. The scalar case can be found in \cite{swart2009herman} and \cite{robert2010herman}, while a numerical analysis is presented in \cite{lassersatdiscretising}. Other strategies have been developed for the numerical analysis of such equations, in particular in \cite{lu2018hopping}.\\

\noindent In the following, we consider \( z_0 \in \mathcal{A}_{-}(M,\alpha_0,r_0,\delta,\mathrm{I})\). According to Definition \ref{defi:initialdata}, there exist a time \( t^{\flat} \in \mathrm{I} \) and a point $z^{\flat} \in \Sigma_{\text{nd}}$ such that $[t^{\flat} - \delta,\, t^{\flat} + \delta] \subset \mathrm{I}$ and
\[ z^{\flat} := \Phi_{-}^{t^{\flat}, t_0}(z_0) \in \Sigma_{\mathrm{nd}}, \quad \Phi_{-}^{t,t_0}(z_0) \notin \Sigma \text{ for } t \in \mathrm{I} \setminus \lbrace t^{\flat}\rbrace, \quad |w(q^{\flat})| \leqslant \alpha_0 \quad \text{and} \quad \left| \mathrm{d}w(q^{\flat})\, p^{\flat} \right| > r_0.\]

\noindent We introduce notations that will be used throughout the rest of the document.

\begin{notation} \label{notation:alpha} Let $r > 0$ and $\theta \in \R$ be such that $\diff w (q^{\flat})p^{\flat} = r \Vec{e}_{\theta}$ where $\Vec{e}_{\theta} = (\cos(\theta), \sin(\theta)) \in \R^2$. As $z^{\flat} \in \Sigma_{\text{nd}}$ and $(\Vec{e}_{\theta} , \Vec{e}_{\theta}^{\perp})$ is a basis of $\R^2$, we write $w(q^{\flat}) = \alpha \Vec{e}_{\theta}^{\perp}$ with $$\alpha = \lvert w(q^{\flat}) \lvert.$$
\end{notation}

\noindent Note that the quantities \(t^{\flat} \), \(z^{\flat} \), \( r \), \( \theta \), and \( \alpha \) depend on the choice of the point \( z_0 \) but we omit this dependence in the notation for simplicity. Subsection~\ref{subsec:drift} sets the condition imposed at time \( t^{\flat} \) for the classical trajectories, Subsection~\ref{subsec:newCQ} introduces the classical quantities used to describe the wave packets after the crossing and Subsection~\ref{subsec:maintheo} presents the main theorem.

\subsubsection{Drift}
\label{subsec:drift}

\noindent  Since ~\cite{hagedorn1998}, \cite{hagedorn1999}, ~\cite{fermanianlasserdrift2012} and \cite{Haridrift2016} it is classical to implement a drift for trajectories passing close to a crossing point (some talk of avoiding crossings) by setting $~\widetilde{z}^{\flat} = (q^{\flat}, p^{\flat} + \delta_{p^{\flat}})$ with $\delta_{p^{\flat}}$ the required drift. The energy conservation, stated in \eqref{eq:conservationNRJ}, imposes the following conditions on $\delta_{p^{\flat}}$
\begin{equation}
\label{eq:defdrift}
    p^{\flat} \cdot \delta_{p^{\flat}} = - 2\alpha \quad \text{and} \quad \delta_{p^{\flat}} = \mathcal{O}(\sqrt{\varepsilon}).
\end{equation}

\noindent These conditions do not provide a unique definition for $\delta_{p^{\flat}}$. In the literature (see \cite{tully1971}, ~\cite{hagedorn1998}, \cite{hagedorn1999}, \cite{fermanianlasserdrift2012} or \cite{Haridrift2016} for example), a commonly chosen particular solution for $\delta_{p^{\flat}}$ is given by \[ \delta_{p^{\flat}} = - 2 \alpha \dfrac{p^{\flat}}{\lvert p^{\flat} \lvert}.\] If \( z^{\flat} \in \Upsilon \), then \( \alpha = \lvert w (q^{\flat}) \lvert \, = 0 \) and therefore there is no drift, as it is the case in ~\cite{hagedornclassification} and ~\cite{FGH2021}. Our main result (Theorem ~\ref{theo:main}) does not hold for this choice of drift. As it turns out, our analysis leads us to the following, different solution of ~\eqref{eq:defdrift}.

\begin{defi}[] \label{def:drift} In this paper, the drift $\delta_{p^{\flat}}$ is given by 
\begin{equation}
\label{eq:ourdrift}
\delta_{p^{\flat}} = - \dfrac{2\alpha}{r} \transpose{\dint w(q^{\flat})} \Vec{e}_{\theta}.
\end{equation}
\end{defi}

\noindent We note that this drift satisfies Equation ~\eqref{eq:defdrift} of energy conservation for $\alpha$ small enough. This definition is dictated by the specific steps of the proof presented in Subsection~\ref{subsec:proof} (see also Figure ~\ref{fig:phase_diagram_both}).

\subsubsection{Classical quantities after the crossing}
\label{subsec:newCQ}

\noindent Let us now define the parameters that will be used to describe the wave packets generated after the crossing. A new coherent state appears in the mode distinct from the initial one. This requires the introduction of additional  ``drifted" classical trajectories, denoted with a tilde. Under Assumption \ref{hypsurID}, these new parameters are relevant only for the plus-mode (the corresponding details when we start for the minus-mode are provided in Appendix~\ref{appendix:autremode}). Moreover, there is also a wave packet that remains in the initial mode. This coherent state is constructed using the classical trajectories \( z_{-} \) introduced in \eqref{eqclassicaltraj} with \( z^{\flat} \) as initial condition at time \(t^{\flat}\). Here, we introduce the parameters associated with the plus-mode wave packets. More precisely, we explain how the initial data are chosen. Concerning the minus-mode, we only explain how we construct the direction \( \vec{Y}_{-} \) and the profile. Further details are provided in Section~\ref{sec:analysisCQ}, where we study the classical trajectories, the actions, and the directions of the wave packets, and in Section~\ref{sec:profiles}, where we analyze the profiles.\\

\noindent We consider the following  ``drifted" classical trajectories
\begin{equation}
    \label{eqclassicaltrajdrift}
    \left\lbrace 
    \begin{array}{cll}
        \dot{\widetilde{q}}_{+} & = & \widetilde{p}_{+} \\
        \dot{\widetilde{p}}_{+} & = & -\nabla \lambda_{+}(\widetilde{q}_{+})\\
        (\widetilde{q}_{+}(t^{\flat}),\widetilde{p}_{+}(t^{\flat})) & = & \widetilde{z}_{\flat}
    \end{array}
    \right.
\end{equation}
\noindent and the action associated with it
\begin{equation}
\label{Actionstilde}
    \widetilde{S}_{+}(t;t^{\flat},\widetilde{z}_{\flat}) = \ds\int_{t_0}^t (\lvert \widetilde{p}_{+}(s) \lvert^2 \, - \, h_{+}(\widetilde{z}_{+}(s))) \, \diff s \quad \text{and we set} \quad \widetilde{S}_{+}^{\flat} = \widetilde{S}_{+}(t^{\flat};t^{\flat},\widetilde{z}_{\flat}).
\end{equation}
\noindent Regarding the profiles, we consider the functions $v_{-}$ solution to Equation ~\eqref{eqProfil} and \( \widetilde{v}_{+} \) solution to the following Schrödinger equation
\begin{equation}
\label{eqProfiltilde}
    i \, \partial_t \widetilde{v}_{+} = - \frac{1}{2} \Delta \widetilde{v}_{+} + \frac{1}{2} \operatorname{Hess} \lambda_{+}(\widetilde{q}_{+}) \, y \cdot y \, \widetilde{v}_{+},
\end{equation}
\noindent
for $t > t^{\flat}$, where the initial data at time $t^{\flat}$ are respectively given by $v_{-}^{\flat} \in \schwartz{\R^d}$ and \( \widetilde{v}_{+}^{\flat} \in \schwartz{\R^d} \). These functions are defined for all $y \in \R^d$ by
\begin{equation}
\label{eq:profilout}
    v_{-}^{\flat}(y) := \lim_{t \to t^{\flat}, \, t> t^{\flat}} \exp\left( \frac{i}{2} G_{\alpha}(t)\, y \cdot y \right) u_{-}^{\mathrm{out},\alpha}(t) \text{ and } \widetilde{v}_{+}^{\flat}(y) := \lim_{t \to t^{\flat}, \, t > t^{\flat}} \exp\left( -\frac{i}{2} G_{\alpha}(t)\, y \cdot y \right) \widetilde{u}_{+}^{\mathrm{out},\alpha}(t),
\end{equation}
where $G_{\alpha}$ is a symmetric matrix-valued function defined for all $t \in \mathbb{R}$ by
\begin{multline}
\label{def:phase}
    G_{\alpha}(t) := \Bigg( 
        \dfrac{\partial_i w(q^{\flat}) \cdot \partial_j w(q^{\flat})}{r} h_{\alpha}(t) 
        \\ - \dfrac{\left( \partial_i w(q^{\flat}) \cdot \vec{e}_{\theta} \right) \left( \partial_j w(q^{\flat}) \cdot \vec{e}_{\theta}^{\perp} \right)}{r}
        \left( h_{\alpha}(t) - \dfrac{r \lvert t - t^{\flat}\lvert}{\sqrt{\alpha^2 + r^2 (t - t^{\flat})^2}} \right) \Bigg)_{1 \leq i,j \leq d},
\end{multline}
with the scalar function $h_{\alpha}$ given by
\begin{equation}
    \label{eq:h_alpha}
    h_{\alpha}(t) := \ln\left( r \abs{t - t^{\flat}} + \sqrt{ \alpha^2 + r^2 (t - t^{\flat})^2 } \right).
\end{equation}

\noindent The outgoing functions $\left( \widetilde{u}_{+}^{\mathrm{out},\alpha},\ u_{-}^{\mathrm{out},\alpha} \right)$ that appear in \eqref{eq:profilout} are expressed for all $y \in \R^d$ by
\begin{equation}
    \label{coro:transitioninandout}
    \begin{pmatrix}
        \widetilde{u}_{+}^{\mathrm{out},\alpha}(y) \\
        u_{-}^{\mathrm{out},\alpha}(y)
    \end{pmatrix}
    =
    \begin{pmatrix}
        e^{i\widetilde{\Lambda}^{\varepsilon,\alpha}(\mathrm{H}(y))} b\left( \frac{\mathrm{H}_2(y)}{\sqrt{r}} + \frac{\alpha}{\sqrt{r\varepsilon}}\right)
        & 
        a\left( \frac{\mathrm{H}_2(y)}{\sqrt{r}} + \frac{\alpha}{\sqrt{r\varepsilon}} \right) \\
        a\left( \frac{\mathrm{H}_2(y)}{\sqrt{r}} + \frac{\alpha}{\sqrt{r\varepsilon}} \right)
        &
        -e^{-i\widetilde{\Lambda}^{\varepsilon,\alpha}(\mathrm{H}(y))} \overline{b}\left( \frac{\mathrm{H}_2(y)}{\sqrt{r}} + \frac{\alpha}{\sqrt{r\varepsilon}} \right)
    \end{pmatrix}
    \begin{pmatrix}
        0 \\
        e^{\frac{i}{\varepsilon} S_{-}^{\flat}} u_{-}^{\mathrm{in},\alpha}(y)
    \end{pmatrix},
\end{equation}
with the incoming profile $u_{-}^{\mathrm{in},\alpha}$ defined for $y \in \R^d$ by
\begin{equation}
    \label{eq:uinalpha}
    u_{-}^{\mathrm{in},\alpha}(y) := \lim_{t \to t^{\flat}, \, t < t^{\flat}} \exp\left( \frac{i}{2} G_{\alpha}(t)\, y \cdot y \right) u_{-}(t),
\end{equation}
where $u_{-}$ is the solution to~\eqref{eqProfil} for $t < t^{\flat}$ with initial condition $\varphi$ (introduced in Assumption~\ref{hypsurID}) at time $t_0$. In the above expression, we also have
\[
    S_{-}^{\flat} := S_{-}(t^{\flat}; t_0, z_0), 
    \qquad 
    \mathrm{H}(y) := 
    \Bigg( 
        \underbrace{ \diff w(q^{\flat})\, y \cdot \vec{e}_{\theta} }_{\mathrm{H}_1(y)}, 
        \underbrace{ \diff w(q^{\flat})\, y \cdot \vec{e}_{\theta}^{\perp}}_{\mathrm{H}_2(y)} 
    \Bigg),
\]
\noindent and 
\begin{equation}
\label{fonctionaetb}
    a(z) := e^{-\frac{\pi z^2}{2}}, \quad
    b(z) := \frac{2i}{z \sqrt{\pi}}\, 2^{-\frac{i z^2}{2}} e^{-\frac{\pi z^2}{4}} 
    \Gamma\left( 1 + \frac{i z^2}{2} \right) \sinh\left( \frac{\pi z^2}{2} \right) \quad \text{for all } z \in \C
\end{equation}
where $\Gamma$ is the Gamma function. Finally, the phase $\widetilde{\Lambda}^{\varepsilon,\alpha}$ is defined by
\begin{align*}
    \widetilde{\Lambda}^{\varepsilon,\alpha}(\mathrm{H}(y)) := \Gamma_1\, y \cdot y - \frac{1}{r} \mathrm{H}_1^2(y) + \frac{\alpha^2}{2r \varepsilon} 
    - \frac{1}{r} \mathrm{H}_2^2(y) \ln \left( \frac{1}{2\sqrt{r \varepsilon}} \right) - \frac{\alpha^2}{r \varepsilon} \ln \left( \frac{\alpha}{2\sqrt{r \varepsilon}} \right)
    - \frac{2 \alpha}{r \sqrt{\varepsilon}}\, \mathrm{H}_2(y) \ln \left( \frac{\alpha}{2\sqrt{r \varepsilon}} \right)
\end{align*}
with the matrix $\Gamma_1$ explicitly given by
\begin{equation}
\label{matGamma1}
    \Gamma_1 = \dfrac{1}{r} \Bigg( (\partial_i w(q^{\flat}) \cdot \Vec{e}_{\theta})(\partial_j w(q^{\flat}) \cdot \Vec{e}_{\theta}^{\perp}) \Bigg)_{1 \leqslant i,j \leqslant d}.
\end{equation}
\noindent All the limits mentioned to construct the profiles exist by Propositions ~\ref{prop:ingoingprofiles} and ~\ref{prop:outgoingprofiles}. Moreover, we can note that Equation \eqref{coro:transitioninandout} fully determines the relation between the incoming and outgoing profiles. The phases involved are explicit, contrary to \cite{kammererlasser2008}, where the authors studied such transfers in terms of Wigner transforms.\\

\noindent With regard to the directions, for \( t > t^{\flat} \), we consider the vectors \( \Vec{Y}_{-} \) and \( \Vec{\widetilde{Y}}_{+} \), solutions to Equation ~\eqref{eqVec} and to the following one
\begin{equation}
    \label{eqVectilde}
    \partial_t \Vec{\widetilde{Y}}_{+} = B_{+}(\widetilde{q}_{+}, \widetilde{p}_{+}) \, \Vec{\widetilde{Y}}_{+}.
\end{equation}
The initial data at time \( t^{\flat} \) are given in Equation \eqref{eq:ICvecafter}. These conditions are chosen to be the eigenvectors of the matrix \( V(q^{\flat}) \) associated with the eigenvalues \( \lambda_{-}(q^{\flat}) \) and \( \lambda_{+}(q^{\flat})\) respectively. This is detailed in Subsection~\ref{subsec:studyVec} and more precisely in Proposition~\ref{prop:Vecaftercrossing}.

\subsubsection{Main theorem}
\label{subsec:maintheo}
We can now state the main theorem.
\begin{theorem}
\label{theo:main}
Let Assumptions \ref{hypsurV2} and \ref{hypcrossing} hold.

\noindent We consider \( M, \, r_0 \) two positive real numbers and $k \in \N$. There exists a constant $C > 0$ such that the following holds. For all $\varepsilon \in (0,1)$ and \(\beta \in \big(0,\frac{1}{42}\big)\), for all compact interval $\mathrm{I} \subset \R$ with $t_0 = \text{min}(\mathrm{I})$ and $z_0 \in \mathcal{A}_{-}(M,C \varepsilon^{\frac{1}{2}-\beta},r_0,\varepsilon^{\frac{5}{14}},\mathrm{I})$, the solution $\psi^{\varepsilon}$ of ~\eqref{equation}, with an initial data $\psi_0^{\varepsilon}$ as in Assumption \ref{hypsurID}, satisfies
\begin{equation*}
     \psi^{\varepsilon}(t) = \Vec{Y}_{-}(t) w_{-}^{\varepsilon}(t) + \Vec{\widetilde{Y}_{+}}(t) \widetilde{w}_{+}^{\varepsilon}(t) + \mathcal{O} \Big( \varepsilon^{\frac{1}{14} - \beta} (1 + \lvert \ln \varepsilon\lvert)\Big) \quad \text{for all } t > t^{\flat} := t^{\flat}(z_0), 
\end{equation*}
\noindent in $\sigmakeps{k}{\R^d}{\C^2}$, as $\varepsilon \to 0$, where
\begin{itemize}[leftmargin=*, labelindent=0pt]
    \item the vector-valued functions \( t \mapsto \Vec{Y}_{-}(t) \) and \( t \mapsto \Vec{\widetilde{Y}}_{+}(t) \) are the solutions to Equations~\eqref{eqVec} and~\eqref{eqVectilde} with their initial conditions given in \eqref{eq:ICvecafter}, 
    \item the functions $t \mapsto w_{-}^{\varepsilon}(t)$ and $t \mapsto \widetilde{w}_{+}^{\varepsilon}(t)$ are wave packets : for all $x \in \R^d$, $$w_{-}^{\varepsilon}(t,x) = e^{\frac{i}{\varepsilon} S_{-}(t;t^{\flat},z^{\flat})} \mathrm{WP}_{z_{-}(t)}^{\varepsilon} v_{-}(t,x) \quad \text{and} \quad \widetilde{w}_{+}^{\varepsilon}(t,x) = e^{\frac{i}{\varepsilon} \widetilde{S}_{+}(t;t^{\flat},\widetilde{z}^{\flat})} \mathrm{WP}_{\widetilde{z}_{-}(t)}^{\varepsilon} \widetilde{v}_{+}(t,x)$$ with 
    \begin{itemize}
        \item[$\star$] $t \mapsto v_{-}(t)$ and $t \mapsto \widetilde{v}_{+}(t)$ are solutions of \eqref{eqProfil} and \eqref{eqProfiltilde} for $t > t^{\flat}$, with the initial conditions at $t^\flat$ given above,
        \item[$\star$] the trajectories $t \mapsto z_{-}(t)$ $t \mapsto \widetilde{z}_{+}(t)$ are the classical trajectories defined in \eqref{eqclassicaltraj} and \eqref{eqclassicaltrajdrift}, with respectively \( z^{\flat} \) and \( \widetilde{z}^\flat \), as initial conditions at \( t^{\flat} \),
        \item[$\star$] $t \mapsto S_{-}(t;t^{\flat},z^{\flat}) := S_{-}(t;t_0,z_0) - S_{-}(t^{\flat};t_0,z_0)$ and $t \mapsto \widetilde{S}_{+}(t;t^{\flat},\widetilde{z}^{\flat})$ are the related actions introduced in \eqref{Actions} and \eqref{Actionstilde}.
    \end{itemize}
\end{itemize}
\end{theorem}

\noindent Regarding the accuracy of the remainder in Theorem ~\ref{theo:main}, it is reasonable to expect that the solution admits a wave packet representation at any order, as shown in \cite{FLRcodimun} for codimension 1 crossing. In this paper, we are interested in obtaining a precise information about the transition rules and we have restricted ourselves to the previous leading-order approximation. In order to get a full wave packet expansion, further (non negligible) work is needed, in particular to construct the profiles of the higher-order wave packets.

\subsection{Ideas of the proof and organization of the article} \label{subsec:proof} The proof is inspired by the work presented in \cite{FGH2021}. The initial step of the proof is to examine the behavior of classical quantities when $t$ is close to $t^{\flat}$, that is to say, when the classical trajectories reach $\Sigma_{\text{nd}}$. This is the goal of Section ~\ref{sec:analysisCQ} for the classical trajectories, the actions and the directions of the wave packets and Section ~\ref{sec:profiles} for the profiles.

\subsubsection{Change of variables and unknown}

\noindent A subsequent step involves the analysis of the transition through the area close to the crossing. To this end, we first introduce the following change of time
\begin{equation}
\label{newtime}
    t = t^{\flat} + s \sqrt{\varepsilon}
\end{equation}
\noindent and the new unknown function $u^{\varepsilon}\in \Lp{2}{\R^d}{\C^2}$ defined by
\begin{equation}
\label{newinconnue}
    \psi^{\varepsilon}(t) = e^{\frac{i}{\varepsilon} S_0(t;t^{\flat},z^{\flat})} \mathrm{WP}_{\Phi_0^{t,t^{\flat}}(z^{\flat})}^{\varepsilon} u^{\varepsilon}\Big( \dfrac{t - t^{\flat}}{\sqrt{\varepsilon}} \Big)
\end{equation}
\noindent where $\psi^{\varepsilon}$ is the solution of \eqref{equation} and the flow $\Phi_0^{t,t^{\flat}}(z^{\flat}) = (q_0(t), p_0(t))$ is associated with the smooth Hamiltonian $h_0(z) = \dfrac{\lvert \xi \lvert^2}{2} + v(x)$. In other words, $z_0 = (q_0,p_0)$ satisfies the following system of ordinary differential equations
\begin{equation}
\label{eqclassicaltrajzero}
    \left\lbrace 
    \begin{array}{cll}
        \dot{q}_{0} & = & p_{0} \\
        \dot{p}_{0} & = & -\nabla v(q_{0})\\
        (q_{0}(t^{\flat}),p_{0}(t^{\flat})) & = & z^{\flat}.
    \end{array}
    \right.
\end{equation}
\noindent We also introduce the associated action $S_0(t;t^{\flat},z^{\flat})$, given by
\begin{equation}
\label{actionzero}
    S_{0}(t;t^{\flat},z^{\flat}) = \ds\int_{t^{\flat}}^t \lvert p_{0}(s)\lvert^2 - \, h_{0}(z_{0}(s))) \, \dint s.
\end{equation}

\begin{remark}
    We can note that when $t = t^{\flat} \pm \delta$, $s = \pm \, s_0 := \pm \, \dfrac{\delta}{\sqrt{\varepsilon}}$. Since we will choose $\delta$ such that $\sqrt{\varepsilon} \delta^{-1} \ll 1$, we will have ~$s_0 \gg 1$.
\end{remark}

\noindent We also consider the new space variable $y = \dfrac{x - q_0}{\sqrt{\varepsilon}}$. The profile $u^{\varepsilon}$ defined in \eqref{newinconnue} will be analyzed in terms of this variable. Subsequently, it is advantageous to use the norm
\begin{equation*}
    \norm{f}{\widetilde{\Sigma}^{k}_{\varepsilon}(\R^d,\C)} = \underset{\lvert \alpha \lvert + \lvert \beta \lvert \, \leqslant k}{\text{sup}} \varepsilon^{\frac{\lvert \alpha \lvert + \lvert \beta \lvert }{2}} \norm{f}{{\Sigma}^{\lvert \alpha \lvert + \lvert \beta \lvert}(\R^d,\C)}.
\end{equation*}

\subsubsection{The Landau--Zener model}

\noindent The previous change of unknown and variables allows us to reduce the analysis of our problem to solving a Landau--Zener model (Subsection~\ref{subsec:LZmodel}), since the function ~$u^{\varepsilon}$, defined in \eqref{newinconnue}, satisfies a system that can be viewed as a perturbation of such a model. Landau--Zener systems were introduced in the $1950$s in the context of quantum mechanical transition processes (see references \cite{livrelandau} and \cite{zener1932}) and can be written in the following form 
\begin{equation}
\label{eqLZ}
\dfrac{1}{i} \partial_s u_{\text{LZ}}^{\varepsilon}(s,z) = \begin{pmatrix}
            s + z_1 & z_2 \\
            z_2 & - s - z_1
        \end{pmatrix} u_{\text{LZ}}^{\varepsilon}(s,z)
\end{equation}
\noindent for $z = (z_1, z_2) \in \C^2$.\\ 

\noindent In this paper, we will use the results from \cite{fermaniangerard2002} and \cite{kammererlasser2008} to obtain the behavior of the solutions to ~\eqref{eqLZ} as $s \to \pm \infty$. These papers provide asymptotics of $u_{\text{LZ}}^{\varepsilon}$ as $s \to \pm \infty$, that we call \textit{Landau--Zener asymptotics}. They also establish an algebraic relation relating the profile at $s \to - \infty$ and $s \to + \infty$.

\subsubsection{Phases analysis and justification of the drift}

\noindent In view of the change of unknown, the phases of the ingoing and outgoing wave packets must be compared with the phase $\frac{i}{\varepsilon} S_0 + \frac{i}{\sqrt{\varepsilon}} p_0 \cdot \frac{x - q_0}{\sqrt{\varepsilon}}$ of the average wave packet for $t$ close to $t^{\flat}$. To perform this comparison, the change of time enables us to use the Taylor's expansions of classical trajectories and actions, which are studied in Section ~\ref{sec:analysisCQ}. This comparison allows us to relate the wave packet profiles to the Landau--Zener asymptotics. In fact, this leads to identify in $$\frac{i}{\varepsilon} S_- + \frac{i}{\sqrt{\varepsilon}} p_- \cdot \frac{x - q_-}{\sqrt{\varepsilon}} \quad \text{and} \quad \frac{i}{\varepsilon} \widetilde{S}_+ + \frac{i}{\sqrt{\varepsilon}} \widetilde{p}_+ \cdot \frac{x - \widetilde{q}_+}{\sqrt{\varepsilon}}$$ four distinct contributions: the first, $\Lambda^{\varepsilon,\alpha}$, coincides with the Landau--Zener asymptotics; the second, $\frac{i}{2} G_{\alpha}(t^{\flat} + \sqrt{\varepsilon} s) y \cdot y$, allows to regularize the profile; the third $\Phi^{\varepsilon,\alpha}$, participates to the definition of the transfer coefficient for the profiles; and the last term is used to define the drift. This is formalized in the following lemma.

\begin{lemma} \label{prop:changetime} We consider \( M \) and \( r_0 \) two positive real numbers. There exists a constant $C > 0$ such that the following holds. For all $(\varepsilon,\delta, \alpha_0,R) \in (0,1) \times (0,1] \times \R_{+} \times [1, + \infty [ \) such that $\sqrt{\varepsilon} \delta^{-1} \leqslant 1$ and $\alpha_0 \leqslant \dfrac{R}{2} \sqrt{\varepsilon}$, for all compact interval $\mathrm{I} \subset \R$ with $t_0 = \text{min}(\mathrm{I})$ and $z_0 \in \mathcal{A}_{-}(M,\alpha_0,r_0,\delta,\mathrm{I})$, we define for $(s,\eta) \in \R \times \R^2$ the phases $\Lambda^{\varepsilon,\alpha}$ and $\Phi^{\varepsilon,\alpha}$ by 
\begin{equation}
\label{phase}
    \Lambda^{\varepsilon,\alpha}(s,\eta) = \dfrac{1}{2r} \Big[ ( sr + \eta \cdot \Vec{e}_{\theta})^2 + \Big(\eta \cdot \Vec{e}_{\theta}^{\perp} + \frac{\alpha}{\sqrt{\varepsilon}}\Big)^2 \ln(|s|\sqrt{r}) \Big],
\end{equation}
\begin{multline}
\label{phase2}
    \Phi^{\varepsilon,\alpha}(\eta) = \dfrac{\alpha^2}{4r \varepsilon} - \dfrac{\alpha^2}{2r \varepsilon} \ln \Big( \dfrac{\alpha}{2\sqrt{r \varepsilon}} \Big) - \dfrac{1}{2r} (\eta \cdot \Vec{e}_{\theta})^2 \\ - \dfrac{\alpha}{r \sqrt{\varepsilon}} \eta \cdot \Vec{e}_{\theta}^{\perp} \ln \Big( \dfrac{\alpha}{2\sqrt{r \varepsilon}} \Big) - \dfrac{1}{2r} (\eta \cdot \Vec{e}_{\theta}^{\perp})^2 \ln \Big( \dfrac{1}{2\sqrt{r \varepsilon}} \Big) + \dfrac{1}{2} \Gamma_1 y \cdot y
\end{multline}

\noindent where the matrix $\Gamma_1$ is explicitly given in \eqref{matGamma1}. For $t = t^{\flat} + s \sqrt{\varepsilon}$, $ y = \frac{x - q_0(t)}{\sqrt{\varepsilon}}$, $\eta(y) = \diff w(q^{\flat})y$ and $G_{\alpha}$ the matrix-valued function defined in \eqref{def:phase}, we have the following pointwise estimate
\begin{enumerate}[leftmargin=*, labelindent=0pt]
    \item If $s \to - \infty$ (i.e. $\varepsilon \to 0$, $\lvert t - t^{\flat} \lvert \, \leqslant \delta$ and $t < t^{\flat}$)
\begin{multline*}
\dfrac{i}{\varepsilon}  S_{-}(t;t_0,z_0) + \dfrac{i}{\varepsilon} p_{-}(t) \cdot (x - q_{-}(t)) = \dfrac{i}{\varepsilon} S_{-}(t^{\flat};t_0,z_0) + \dfrac{i}{\varepsilon} S_{0}(t;t^{\flat},z^{\flat}) + \dfrac{i}{\sqrt{\varepsilon}} p_0(t) \cdot y - i \Lambda^{\varepsilon,\alpha}(s, \eta(y)) \\ - i \Phi^{\varepsilon,\alpha}(\eta(y)) + \dfrac{i}{2} G_{\alpha}(t^{\flat} + \sqrt{\varepsilon}s) y \cdot y - \dfrac{i \alpha}{r \sqrt{\varepsilon}} \eta(y) \cdot \Vec{e}_{\theta} + \sigma
\end{multline*}
    \item If $s \to + \infty$ (i.e. $\varepsilon \to 0$, $\lvert t - t^{\flat} \lvert \, \leqslant \delta$ and $t > t^{\flat}$)
\begin{multline*}
\dfrac{i}{\varepsilon} S_{0}(t;t^{\flat},z^{\flat}) + \dfrac{i}{\sqrt{\varepsilon}} p_0(t) \cdot y = \dfrac{i}{\varepsilon}  S_{-}(t;t^{\flat},z^{\flat}) + \dfrac{i}{\varepsilon} p_{-}(t) \cdot (x - q_{-}(t)) - i \Lambda^{\varepsilon,\alpha}(s, \eta(y)) \\ - i \Phi^{\varepsilon,\alpha}(\eta(y)) + \dfrac{i}{2} G_{\alpha}(t^{\flat} + \sqrt{\varepsilon}s) y \cdot y + \dfrac{i \alpha}{r \sqrt{\varepsilon}} \eta(y) \cdot \Vec{e}_{\theta} +  \sigma \text{ and }
\end{multline*}
\begin{multline*}
\dfrac{i}{\varepsilon} S_{0}(t;t^{\flat},z^{\flat}) + \dfrac{i}{\sqrt{\varepsilon}} p_0(t) \cdot y = \dfrac{i}{\varepsilon} \widetilde{S}_{+}(t;t^{\flat},\widetilde{z}^{\flat}) + \dfrac{i}{\varepsilon} \widetilde{p}_{+}(t) \cdot (x - \widetilde{q}_{+}(t)) - \dfrac{i}{\sqrt{\varepsilon}} \delta_{p^{\flat}} \cdot y + i \Lambda^{\varepsilon,\alpha}(s, \eta(y)) \\ + i \Phi^{\varepsilon,\alpha}(\eta(y)) - \dfrac{i}{2} G_{\alpha}(t^{\flat} + \sqrt{\varepsilon}s) y \cdot y - \dfrac{i \alpha}{r \sqrt{\varepsilon}} \eta(y) \cdot \Vec{e}_{\theta} + \sigma.
\end{multline*}
\end{enumerate}
\noindent where $\lvert \sigma \lvert \, \leqslant C \Big( \dfrac{1}{|s|} + \sqrt{\varepsilon} |s|^3 + \sqrt{\varepsilon} s^2 \lvert y \lvert \Big)$.
\end{lemma}

\noindent In the proof, by analyzing the minus-phase for $t < t^{\flat}$ (using the first point of the previous lemma) and the plus-phase for $t > t^{\flat}$ (using the second point of the previous lemma), we deduce that the function $\widetilde{u}_{+}^{\text{out}}$, introduced in \eqref{coro:transitioninandout}, must satisfy
\begin{equation*}
    \widetilde{u}_{+}^{\text{out}}(y) 
    = \text{Exp}\Big( \frac{i}{\varepsilon} S_{-}^{\flat} \Big) 
      \text{Exp}\Big( -\frac{i}{\sqrt{\varepsilon}}\, \delta_{p^{\flat}} \cdot y 
      - \frac{2 i \alpha}{r \sqrt{\varepsilon}}\, \eta(y) \cdot \vec{e}_{\theta} \Big)\,
      a\Big( \frac{\eta \cdot \vec{e}_{\theta}}{\sqrt{r}} 
      + \frac{\alpha}{\sqrt{r \varepsilon}} \Big)\,
      u_{-}^{\text{in},\alpha}(y).
\end{equation*}
In order to ensure sufficient regularity (we want that all its derivatives and momenta are uniformly bounded in $\mathrm{L}^2$), the term 
\[
- \frac{i}{\sqrt{\varepsilon}}\, \delta_{p^{\flat}} \cdot y - \frac{2 i \alpha}{r \sqrt{\varepsilon}}\, \mathrm{d}w(q^{\flat})\, y \cdot \vec{e}_{\theta}
\]
must be canceled. The definition of $\delta_{p^{\flat}}$ given in \eqref{eq:ourdrift} is motivated by the latter result.

\subsubsection{The ingoing wave packet in the new coordinates}

\noindent The third step of the proof consists in identifying the ingoing wave packet at time \( t^{\flat} - \delta \) (see Subsection~\ref{subsec:ingoingWP}), that is, determining the initial data for the Landau--Zener model as \( s \to -\infty \). Using the asymptotic when $s$ tends to $- \infty$ of the solution of \eqref{eqLZ} and the previous lemma, we obtain the following theorem.

\begin{theorem}[The ingoing wave packet] \label{theo:ingoingWP} Let Assumptions \ref{hypsurV2} and \ref{hypcrossing} hold. We consider \( M, \, r_0 \) two positive real numbers. There exists a constant $C > 0$ such that the following holds.

\noindent For all $(\varepsilon,\delta,\alpha_0,R,k) \in (0,1) \times (0,1] \times \R_{+} \times [1, + \infty [ \, \times \, \N \) such that $\sqrt{\varepsilon} \delta^{-1} \leqslant 1$ and $\alpha_0 \leqslant \dfrac{R}{2} \sqrt{\varepsilon}$, for all compact interval $\mathrm{I} \subset \R$ with $t_0 = \text{min}(\mathrm{I})$, $z_0 \in \mathcal{A}_{-}(M,\alpha_0,r_0,\delta,\mathrm{I})$, the solution $\psi^{\varepsilon}$ of ~\eqref{equation}, with an initial data $\psi_0^{\varepsilon}$ as in Assumption \ref{hypsurID}, satisfies \eqref{newinconnue} at time $t = t^{\flat} - \delta$ \Bigg(i.e. at time $- s_0 = - \dfrac{\delta}{\sqrt{\varepsilon}}$\Bigg) with 
\begin{equation}
\label{Uepsminus}
    u^{\varepsilon}(-s_0,y) = e^{-i \Lambda^{\varepsilon,\alpha}(-s_0,\eta(y))} \alpha_2^{\text{in}}(\eta(y)) \, \Vec{Y}_{\flat} \, +  \, \sigma(\varepsilon, \delta)
\end{equation}
\noindent where
\begin{equation*}
    \| \sigma(\varepsilon, \delta) \|_{_{\widetilde{\Sigma}_{\varepsilon}^{k}}} \leqslant C (\sqrt{\varepsilon} \delta^{-1} + \varepsilon^{\frac{3}{2}} \delta^{-4} + \varepsilon^{-1} \delta^3)(1 + \lvert \ln \delta \lvert).
\end{equation*}
\noindent The vector $\Vec{Y}_{\flat}$ is introduced in \eqref{eq:ICvecafter}, the phase $\Lambda^{\varepsilon,\alpha}$ is defined by \eqref{phase} and the function $y \mapsto \alpha_2^{\text{in}}(\eta(y))$ is given by
\begin{multline}
\label{alphain}
    \alpha_2^{\text{in}}(\eta(y)) = \text{Exp} \Bigg( \dfrac{i}{\varepsilon} S_{-}^{\flat} - \dfrac{i\alpha^2}{4r \varepsilon} + \dfrac{i \alpha^2}{2r \varepsilon} \ln\Big(\dfrac{\alpha}{2\sqrt{r\varepsilon}} \Big) + \dfrac{i}{2r} (\eta(y) \cdot \Vec{e}_{\theta})^2 + \dfrac{i \alpha}{r \sqrt{\varepsilon}} \eta(y) \cdot \Vec{e}_{\theta}^{\perp} \ln\Big(\dfrac{\alpha}{2\sqrt{r\varepsilon}} \Big)\\ + \dfrac{i}{2r} (\eta(y) \cdot \Vec{e}_{\theta}^{\perp})^2 \ln\Big(\dfrac{1}{2\sqrt{r\varepsilon}} \Big) - \dfrac{i}{2} \Gamma_1 y \cdot y \Bigg) \text{Exp} \Bigg( - \dfrac{i \alpha}{r \sqrt{\varepsilon}} \eta(y) \cdot \Vec{e}_{\theta} \Bigg)u_{-}^{\text{in},\alpha}(y),
\end{multline}
\noindent with $u_{-}^{\text{in},\alpha}$ is defined in \eqref{eq:uinalpha}.
\end{theorem}

\subsubsection{The transfer laws and the outgoing wave packet}

\noindent Thanks to the study of the Landau--Zener model and the algebraic relation between the asymptotic solution of \eqref{eqLZ} in $-\infty$ and the solution at $+ \infty$, we establish the following theorem.

\begin{theorem} \label{theo:outgoingWPfctu} Let Assumptions \ref{hypsurV2} and \ref{hypcrossing} hold. We consider \( M, \, r_0 \) two positive real numbers and $N_0 \in \N^{*}$. There exists a constant $C > 0$ such that the following holds. For all $R \in [1, + \infty[$, there exists a cut-off function $\chi_0 : \R^d \longrightarrow [0,1]$ satisfying 
\begin{equation}
\label{eq:cutoff}
|\eta(y)| := \Big\lvert \diff w(q^{\flat}) y \Big\lvert \, \leqslant \dfrac{R}{2} \text{ when } \dfrac{y}{R} \in \text{supp}(\chi_0),
\end{equation} such that for all $(\varepsilon,\delta,\alpha_0,k) \in (0,1) \times (0,1] \times \, \R_{+} \times \, \N \) with
\begin{equation}
\label{eq:conditiontheo}
    \sqrt{\varepsilon} \delta^{-1} \leqslant 1, \alpha_0 \leqslant \dfrac{R\sqrt{\varepsilon}}{2}, R^2 \sqrt{\varepsilon} \ll 1, R \delta \ll 1 \quad \text{and} \quad R \varepsilon^2 \delta^{-4} \ll 1,
\end{equation} for all compact interval $\mathrm{I} \subset \R$ with $t_0 = \text{min}(\mathrm{I})$ and for all $z_0 \in \mathcal{A}_{-}(M,\alpha_0,r_0,\delta,\mathrm{I})$, the solution $\psi^{\varepsilon}$ of Equation \eqref{equation}, with an initial data $\psi_0^{\varepsilon}$ as in Assumption ~\ref{hypsurID}, satisfies \eqref{newinconnue} at time $t = t^{\flat} + \delta$ \Big( i.e. $s_0 = \dfrac{\delta}{\sqrt{\varepsilon}}$ \Big) with 
\begin{multline*}
    u^{\varepsilon}(s_0,y) = \chi_0 \Big( \dfrac{y}{R} \Big) \times \Big( e^{i \Lambda^{\varepsilon,\alpha}(s_0,\eta(y))} \alpha_1^{\text{out}}(\eta(y)) \Vec{Y}_{\flat}^{\perp} + e^{-i \Lambda^{\varepsilon,\alpha}(s_0,\eta(y))} \alpha_2^{\text{out}}(\eta(y)) \Vec{Y}_{\flat} \Big) + \sigma(\varepsilon, \delta) \\ \text{where } \| \sigma(\varepsilon, \delta) \|_{_{\widetilde{\Sigma}_{\varepsilon}^{k}}} \leqslant C \Big( (\sqrt{\varepsilon} \delta^{-1} + \varepsilon^{\frac{3}{2}} \delta^{-4} + \delta + R \varepsilon^{-1} \delta^3 + R^3 \sqrt{\varepsilon} \delta^{-1} + R^{-N_0})(1 + \lvert \ln \delta \lvert) \Big).
\end{multline*}
\noindent In the formula above, we have $\eta(y) := \diff w(q^{\flat}) y$ and the functions $\begin{pmatrix}
        \alpha_1^{\text{out}} \\
        \alpha_2^{\text{out}}
    \end{pmatrix}$ are defined for all $\eta \in \C^2$ by
\begin{equation}
\label{eq:relationalphainetout}
    \begin{pmatrix}
        \alpha_1^{\text{out}}(\eta) \\
        \alpha_2^{\text{out}}(\eta)
    \end{pmatrix} = \mathcal{S}\Bigg(\dfrac{\eta \cdot \vec{e}_{\theta}}{\sqrt{r}} + \dfrac{\alpha}{\sqrt{\varepsilon}} \Bigg) \begin{pmatrix}
        0 \\
        \alpha_2^{\text{in}}(\eta)
    \end{pmatrix} \quad \text{with} \quad \mathcal{S}(z) = \begin{pmatrix}
        a(z) & -\overline{b}(z) \\
        b(z) & a(z)
    \end{pmatrix} \quad \text{for } z \in \C
\end{equation}
\noindent and the functions $a$ and $b$ given in \eqref{fonctionaetb}.
\end{theorem}

\noindent The fact that the function ~$a$ is nonzero explains why a new wave packet is generated on the other mode : both functions $\alpha_1^{\text{out}}$ and $\alpha_2^{\text{out}}$ are nonzero. The second point of Lemma \ref{prop:changetime} allows us to return to the original coordinates (see Subsection \ref{subsec:outgoingWP}) and to obtain the following theorem.

\begin{theorem}[The outgoing wave packet] \label{theo:outgoingWP} Let Assumptions \ref{hypsurV2} and \ref{hypcrossing} hold. We consider \( M, \, r_0 \) two positive real numbers and $N_0 \in \N^{*}$. There exists a constant $C > 0$ such that the following holds. For all $(\varepsilon,\delta,\alpha_0,R,k) \in (0,1) \times (0,1] \times \, \R_{+} \times \, [1, + \infty[ \, \times \, \N \) satisfying conditions \eqref{eq:conditiontheo}, for all compact interval $\mathrm{I} \subset \R$ with $t_0 = \text{min}(\mathrm{I})$ and for all $z_0 \in \mathcal{A}_{-}(M,\alpha_0,r_0,\delta,\mathrm{I})$, the solution $\psi^{\varepsilon}$ of Equation \eqref{equation}, with an initial data $\psi_0^{\varepsilon}$ as in Assumption \ref{hypsurID}, satisfies at time $t = t^{\flat} + \delta$ 
\begin{equation}
\label{outgoingWP}
\psi^{\varepsilon} = \psi^{\varepsilon}_{+} + \psi^{\varepsilon}_{-} + \sigma(\varepsilon, \delta, R, N_0)  
\end{equation}
\noindent with for all $x \in \R^d$
\begin{equation*}
    \psi^{\varepsilon}_{-}(t^{\flat} + \delta,x) = e^{\frac{i}{\varepsilon} S_{-}(t^{\flat} + \delta;t^{\flat},z^{\flat})} \mathrm{WP}_{\Phi_-^{t,t^{\flat}}(z^{\flat})}^{\varepsilon} \Big(\mathrm{Exp} \Big( \frac{i}{2} G_{\alpha}(t^{\flat} + \delta) x \cdot x \Big) u_{-}^{\text{out},\alpha}(x) \Big) \Vec{Y}_{\flat}^{\perp},
\end{equation*}
\begin{equation*}
    \psi^{\varepsilon}_{+}(t^{\flat} + \delta,x) = e^{\frac{i}{\varepsilon}  \tilde{S}_{+}(t^{\flat} + \delta;t^{\flat},\tilde{z}^{\flat})} \mathrm{WP}_{\widetilde{\Phi}_+^{t,t^{\flat}}(\widetilde{z}^{\flat})}^{\varepsilon} \Big(\mathrm{Exp} \Big( - \frac{i}{2} G_{\alpha}(t^{\flat} + \delta) x \cdot x \Big) \widetilde{u}_{+}^{\text{out},\alpha}(x) \Big) \Vec{Y}_{\flat}
\end{equation*}
\noindent where the symmetric matrix-valued function $G_{\alpha}$ is explicitly given in \eqref{def:phase} below, $(\widetilde{u}_{+}^{\text{out},\alpha}, u_{-}^{\text{out},\alpha})$ are defined in \eqref{coro:transitioninandout}, the vector $(\Vec{Y}_{\flat}, \Vec{Y}_{\flat}^{\perp})$ are introduced in \eqref{eq:ICvecafter} and \[ \| \sigma(\varepsilon, \delta, R, N_0)  \|_{_{\Sigma_{\varepsilon}^{k}}} \leqslant C \Bigg( (\sqrt{\varepsilon} \delta^{-1} + \varepsilon^{\frac{3}{2}} \delta^{-4} + \delta + R \varepsilon^{-1} \delta^3 + R^3 \sqrt{\varepsilon} \delta^{-1} + \varepsilon^{-1} \delta^3 + R^{-N_0})(1 + \lvert \ln \delta \lvert) \Bigg).\]
\end{theorem}

\noindent Theorem~\ref{theo:main} is derived from Theorem~\ref{theo:outgoingWP} by propagating the solution outside the gap region using the more general adiabatic theorem~\cite[Proposition~3.1]{FGH2021} (slightly adapting the proof for a different initial error) for $t \geqslant t^{\flat} + \delta$ and by making a suitable choice, detailed at the end of the paper, of $N_0$, $\alpha_0$ and of the parameters $\delta$ and $R$ as functions of $~\varepsilon$, to obtain the desired remainder.\\

\noindent The following diagram illustrates the evolution of the phase and the transfer coefficient of the profile during the propagation. The central comments apply to both modes. The drift is justified in the final box associated with the plus-mode. The functions $a$ and $b$, which appear in the diagram, originate from the analysis of the Landau--Zener model and are explicitly defined in ~\eqref{fonctionaetb}.

\begin{figure}[htbp!]
    \hspace*{-2cm}
    \begin{tikzpicture}[
        node distance=1.4cm and 1.4cm,
        boxminus/.style={
            draw, fill=cyan!20, minimum width=4.5cm, minimum height=1.2cm,
            font=\sffamily\footnotesize, align=center, rounded corners=8pt
        },
        boxplus/.style={
            draw, fill=blue!20, minimum width=4.5cm, minimum height=1.2cm,
            font=\sffamily\footnotesize, align=center, rounded corners=8pt
        },
        arrow/.style={->, thick},
        dashed arrow/.style={->, thick, dashed}
    ]

    % MINUS MODE
    \node[boxminus] (mn1) at (0,0) {
    \begin{tabular}{c}
    \textbf{Phase}\\
    $\dfrac{i}{\varepsilon}  S_{-}(t_0,z_0) + \dfrac{i}{\varepsilon} p_{-} \cdot (x - q_{-})$\\[0.8em] \hline \\[-0.8em]
    \textbf{Profile coefficient}\\
    1
    \end{tabular}};
    
    \node[boxminus, below=of mn1] (mn2) {
    \begin{tabular}{c}
    \textbf{Phase}\\ $\dfrac{i}{\varepsilon} S_0 + \dfrac{i}{\sqrt{\varepsilon}} p_0 \cdot y - \dfrac{i \alpha}{r \sqrt{\varepsilon}} \eta(y) \cdot \vec{e}_{\theta}$\\[0.8em] \hline \\[-0.8em]
    \textbf{Profile coefficient}\\ $\exp(-i \Phi^{\varepsilon,\alpha}(\eta(y)))$
    \end{tabular}};
    
    \node[boxminus, below=of mn2] (mn3) {
    \begin{tabular}{c}
    \textbf{Phase}\\ $\dfrac{i}{\varepsilon} S_0 + \dfrac{i}{\sqrt{\varepsilon}} p_0 \cdot y - \dfrac{i \alpha}{r \sqrt{\varepsilon}} \eta(y) \cdot \vec{e}_{\theta}$\\[0.8em] \hline \\[-0.8em]
    \textbf{Profile coefficient}\\ $\exp(-i \Phi^{\varepsilon,\alpha}(\eta(y))) \cdot \overline{b}\left(\dfrac{\eta(y)\cdot\vec{e}_\theta}{\sqrt{r}}\right)$
    \end{tabular}};
    
    \node[boxminus, below=of mn3] (mn4) {
    \begin{tabular}{c}
    \textbf{Phase}\\ $\dfrac{i}{\varepsilon} S_{-} + \dfrac{i}{\varepsilon} p_{-} \cdot (x - q_{-})$\\ \\[0.8em] \hline \\[-0.8em]
    \textbf{Profile coefficient}\\ $\exp(-2i \Phi^{\varepsilon,\alpha}) \cdot \overline{b}\left(\dfrac{\eta(y)\cdot\vec{e}_\theta}{\sqrt{r}}\right)$
    \end{tabular}};

    % PLUS MODE - aligned horizontally with minus-mode blocks
    \node[boxplus, right=6cm of mn1] (pn1) {
    \begin{tabular}{c}
    \textbf{Phase}\\ 0 \\[0.8em] \hline \\[-0.8em]
    \textbf{Profile coefficient} \\ 1 (the profile is equal to 0)
    \end{tabular}};
    
    \node[boxplus, below=of pn1] (pn2) {
    \begin{tabular}{c}
    \textbf{Phase}\\ 0\\[0.8em] \hline \\[-0.8em]
    \textbf{Profile coefficient} \\ 1 (the profile is equal to 0)
    \end{tabular}};
    
    \node[boxplus, below=of pn2] (pn3) {
    \begin{tabular}{c}
    \textbf{Phase}\\ $\dfrac{i}{\varepsilon} S_0 + \dfrac{i}{\sqrt{\varepsilon}} p_0 \cdot y - \dfrac{i \alpha}{r \sqrt{\varepsilon}} \eta(y) \cdot \vec{e}_{\theta}$\\[0.8em] \hline \\[-0.8em]
    \textbf{Profile coefficient}\\ $\exp(-i \Phi^{\varepsilon,\alpha}) \cdot a\left(\dfrac{\eta(y)\cdot\vec{e}_\theta}{\sqrt{r}}\right)$
    \end{tabular}};
    
    \node[boxplus, below=of pn3] (pn4) {
        \begin{tabular}{c}
        \textbf{Phase}\\
        $\dfrac{i}{\varepsilon} \widetilde{S}_{+} + \dfrac{i}{\varepsilon} \widetilde{p}_{+} \cdot (x - \widetilde{q}_{+})$\\[0.5em] $\underbrace{- \dfrac{2i \alpha}{r \sqrt{\varepsilon}} \eta(y) \cdot \vec{e}_\theta - \delta_{p^\flat} \cdot y}$\\ must be equal to 0 \\ $\rightsquigarrow$ definition of the drift\\ [0.8em] \hline \\[-0.8em]
        \textbf{Profile coefficient}\\
        $a\left(\dfrac{\eta(y)\cdot\vec{e}_\theta}{\sqrt{r}}\right)$
        \end{tabular}
    };

% TITRES DES COLONNES
\node[font=\bfseries\sffamily, align=center] at ($(mn1)+(0,1.5)$) {Minus-mode};
\node[font=\bfseries\sffamily, align=center] at ($(pn1)+(0,1.5)$) {Plus-mode};

%ARROWS
    \draw[dashed, ->, thick] (mn1.south) -- (mn2.north) node[midway, left, align=center] {Change of representation \\ with (1) of Lemma~\ref{prop:changetime}};
    \draw[->, thick] (mn2.south) -- (mn3.north) node[midway, left, align=center] {Landau--Zener transition\\(Proposition~\ref{prop:soleqmodel})};
    \draw[dashed, ->, thick] (mn3.south) -- (mn4.north) node[midway, left, align=center] {Change of representation \\ with (2) of Lemma~\ref{prop:changetime}};

    \draw[dashed, ->, thick] (pn1.south) -- (pn2.north) node[midway, right, align=center] {Change of representation \\ with (1) of Lemma~\ref{prop:changetime}};
    \draw[->, thick] (pn2.south) -- (pn3.north) node[midway, right, align=center] {Landau--Zener transition\\(Proposition~\ref{prop:soleqmodel})};
    \draw[dashed, ->, thick] (pn3.south) -- (pn4.north) node[midway, right, align=center] {Change of representation \\ with (2) of Lemma~\ref{prop:changetime}};

    % COMMENTAIRES CENTRAUX
    \node[align=center, font=\footnotesize] at ($(mn1)!0.5!(pn1)$) {\textit{Representation of the initial wave packet}};
    \node[align=center, font=\footnotesize] at ($(mn2)!0.5!(pn2)$) {\begin{tabular}{c}
    \textit{Representation with the average}\\
    \textit{wave packet using the change} \\
    \textit{of unknown \eqref{newinconnue}}
    \end{tabular}};
    \node[align=center, font=\footnotesize] at ($(mn3)!0.5!(pn3)$) {\begin{tabular}{c}
    \textit{Still in the representation with the}\\
    \textit{average wave packet: the profile} \\
    \textit{coefficients have been replaced using}\\
    \textit{the Landau--Zener model}
    \end{tabular}};
    \node[align=center, font=\footnotesize] at ($(mn4)!0.5!(pn4)$) {\begin{tabular}{c}
    \textit{Return to the initial representation}
    \end{tabular}};

    \end{tikzpicture}
    \caption{Evolution of phase and transfer coefficient during wave packet propagation.}
    \label{fig:phase_diagram_both}
\end{figure}
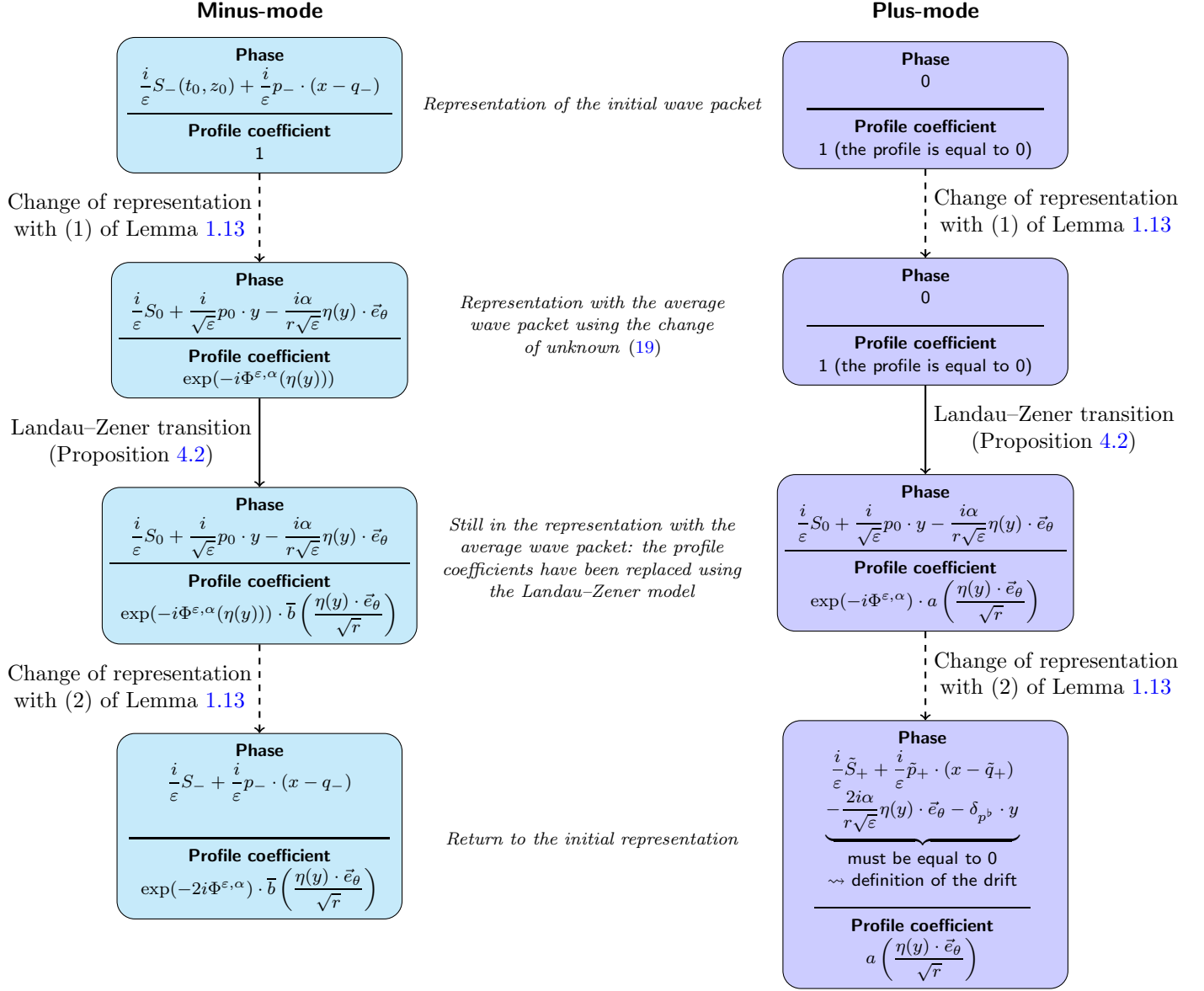
\FloatBarrier

\noindent \textbf{Acknowledgments.} The author would like to express her sincere gratitude to Clotilde Fermanian Kammerer, Lysianne Hari and Martin Averseng for their precious advice, productive discussions and constructive suggestions throughout the preparation of this paper. The author would also like to thank Caroline Lasser for her time and help during the writing. This work is supported by the Région Pays de la Loire via the Connect Talent Project HiFrAn $2022$ $07750$, as well as by the France $2030$ program, Centre Henri Lebesgue ANR-$11$-LABX-$0020$-$01$.

%%%%%%%%%%%%%%%%%%%%%%%%%%%%%%%%%%%%%%%%%%%%%%%%%%%%%%%%%%%

\section{Analysis of classical quantities}
\label{sec:analysisCQ}

\noindent The goal of this section is to study the behavior close to \( t^{\flat} \) of all classical quantities that are useful to the construction of our approximate solution.\\

\noindent We fix the parameters \((M, \alpha_0,r_0,\delta) \in \R_{+}^{*} \times \R_{+} \times \R_{+}^{*} \times (0, 1] \) and a compact interval \( \mathrm{I} \subset \mathbb{R} \) with \( t_0 = \min(\mathrm{I}) \). For convenience, throughout the proofs, we will denote by \( C \) various constants that may depend on \( M \) and $r_0$. Moreover, for all ~$k \in \N$, $\sigma_k$ will denote a remainder that satisfies \[\lvert \sigma_k \lvert \, \leqslant C \lvert t - t^{\flat} \lvert^k \quad \text{with } C > 0.\]

\noindent According to Definition~\ref{defi:initialdata}, recall that when an initial point \( z_0 \) belongs to \( \mathcal{A}_{\pm}(M,\alpha_0,r_0,\delta,\mathrm{I}) \), this means that there exist a time \( t^{\flat} \in \mathrm{I} \) and a point \( z^{\flat} \in \Sigma_{\mathrm{nd}} \) such that
\([t^{\flat} - \delta,\, t^{\flat} + \delta] \subset \mathrm{I}\)
and the trajectory \(t \in \mathrm{I} \mapsto z_{\pm}(t) := \Phi^{t,t_0}(z_0)\) satisfies
\[z^{\flat} := \Phi^{t^{\flat},t_0}(z_0) \in \Sigma_{\mathrm{nd}}, \qquad
\Phi^{t,t_0}(z_0) \notin \Sigma \ \text{for } t \in \mathrm{I} \setminus \{ t^{\flat} \},
\qquad |w(q^{\flat})| \leqslant \alpha_0,
\qquad \text{and} \qquad \left| \mathrm{d}w(q^{\flat})\, p^{\flat} \right| > r_0.\]

\noindent We will adopt Notations~\ref{notation:alpha} throughout the rest of the article. Then, we write
\[\mathrm{d} w(q^{\flat})\, p^{\flat} := r \vec{e}_{\theta}, 
\quad 
w(q^{\flat}) := \alpha \vec{e}_{\theta}^{\perp},\]
where \( \vec{e}_{\theta} = (\cos\theta, \sin\theta) \in \mathbb{R}^2 \), \( r > 0 \) and \( \alpha \geqslant 0 \).

\subsection{The classical trajectories}
\label{subsec:studyCT}

\noindent It is interesting to compare the classical trajectories $(q_{\pm},p_{\pm})$ and $(\widetilde{q}_{\pm},\widetilde{p}_{\pm})$ with the trajectories $(q_0, p_0)$ solutions of \eqref{eqclassicaltraj}, \eqref{eqclassicaltrajdrift} and \eqref{eqclassicaltrajzero}, respectively. There exists a constant \( A > 0 \) such that for all \( t \in \mathrm{I} \) satisfying \( \alpha \leqslant \lvert t - t^{\flat} \rvert \), the following Taylor expansions hold
    \begin{equation}
    \label{DLp0q0}
    \left\lbrace 
    \begin{array}{cll}
        p_{0}(t)& = p^{\flat} - \nabla v(q^{\flat}) (t-t^{\flat}) + \sigma_{p_{0}} \quad \text{with } \lvert \sigma_{p_{0}}\lvert \, \leqslant A \lvert t - t^{\flat}\lvert^2, \\
        q_{0}(t) & = q^{\flat} + p^{\flat} (t-t^{\flat}) - \dfrac{1}{2} \nabla v(q^{\flat}) (t-t^{\flat})^2 + \sigma_{q_{0}} \quad \text{with } \lvert \sigma_{q_{0}}\lvert \, \leqslant A \lvert t - t^{\flat}\lvert^3.
    \end{array}
    \right.
\end{equation}

\noindent The next proposition gives the corresponding expansions for $p_{\pm}$ and $q_{\pm}$.

\begin{prop}
\label{prop:DLpq} For all $M > 0$ and $r_0 > 0$, there exist $\delta_0$ and a constant $C > 0$ such that the following holds. We consider $(\alpha_0, \delta) \in \R_+ \times \R_+^*$, $\mathrm{I} \subset \R$ a compact interval with $t_0 = \text{min}(\mathrm{I})$ and $z_0 \in \mathcal{A}_{\pm}(M,\alpha_0,r_0,\delta,\mathrm{I})$. For all $t \in \mathrm{I}$ satisfying $\alpha \leqslant | t - t^{\flat} | \leqslant \delta_0$, we have
\begin{multline*}
p_{\pm}(t) = p_0(t) \mp \dfrac{1}{r} \transpose{\diff w(q^{\flat})} \Vec{e}_{\theta} \Big[ \sqrt{\alpha^2 + r^2 (t-t^{\flat})^2} - \alpha \Big] \mp \dfrac{\alpha}{r} \transpose{\diff w(q^{\flat})} \Vec{e}_{\theta}^{\perp} \Argsh{\dfrac{r}{\alpha}(t-t^{\flat})} + \sigma_{p_{\pm}},
\end{multline*}

\begin{multline*}
q_{\pm}(t) = q_0(t) \mp \dfrac{1}{2r} \transpose{\diff w(q^{\flat})} \Vec{e}_{\theta} (t - t^{\flat}) \sqrt{\alpha^2 + r^2 (t-t^{\flat})^2} \mp \dfrac{\alpha^2}{2r^2} \transpose{\diff w(q^{\flat})} \Vec{e}_{\theta} \Argsh{\dfrac{r}{\alpha}(t-t^{\flat})} \pm \dfrac{\alpha}{r} \transpose{\diff w(q^{\flat})} \Vec{e}_{\theta} (t - t ^{\flat})\\ \mp \transpose{\diff w(q^{\flat})} \Vec{e}_{\theta}^{\perp} \Big[ \dfrac{\alpha(t-t^{\flat})}{r} \Argsh{\dfrac{r}{\alpha}(t-t^{\flat})} - \dfrac{\alpha}{r^2} \sqrt{\alpha^2 + r^2 (t-t^{\flat})^2} \Big] \pm \dfrac{\alpha^2}{r^2} \transpose{\diff w(q^{\flat})} \Vec{e}_{\theta}^{\perp} + \sigma_{q_{\pm}}
\end{multline*}
\noindent with
\[ \lvert \sigma_{p_{\pm}}\lvert \, \leqslant C \lvert t - t^{\flat}\lvert^2 \quad \text{and} \quad  \lvert \sigma_{q_{\pm}}\lvert \, \leqslant C \lvert t - t^{\flat}\lvert^3. \]
\end{prop}

\begin{proof}[Proof of Proposition~\ref{prop:DLpq}] Let $z_0$ be a point of $\mathcal{A}_{\pm}(M,\alpha_0,r_0,\delta,\mathrm{I})$. We consider \(t \in \mathrm{I} \) such that ~$t$ belongs to $[t^{\flat} + \alpha, t^{\flat} + \delta]$. We consider $s \in [t^{\flat} + \alpha, t ]$ (the proof is the same if $t \in [t^{\flat} - \delta, t^{\flat} - \alpha ]$ and $s \in [t , t^{\flat} - \alpha ]$). Using the Taylor-Lagrange formula, there exists $c \in \, ]t^{\flat},s[$ such that
\begin{equation*}
    w(q_{\pm}(s)) = \alpha \Vec{e}_{\theta}^{\perp} + r (s - t^{\flat})\Vec{e}_{\theta} + \dfrac{1}{2} f_{\pm}''(c)(s-t^{\flat})^2 \quad \text{where } f_{\pm} : t \mapsto  w(q_{\pm}(t)).
\end{equation*}
\noindent According to Corollary \ref{coro:deriveeCQ} of Appendix \ref{appendix:classicalq}, we deduce that
\begin{equation*}
\label{TEw}
    w(q_{\pm}(s)) = \alpha \Vec{e}_{\theta}^{\perp} + r (s - t^{\flat})\Vec{e}_{\theta} + \sigma_2. \tag{a}
\end{equation*}
\noindent Then, computations give that
\begin{equation*}
    |w(q_{\pm}(s))|^2 =  \big(  \alpha^2 + r^2(s-t^{\flat})^2 \big)  \Bigg( 1 + \underbrace{\dfrac{2(\alpha + r(s-t^{\flat}))\sigma_2 + |\sigma_2|^2}{\alpha^2 + r^2(s-t^{\flat})^2}}_{:= \, \sigma} \Bigg).
\end{equation*}

\noindent Using the assumption $\alpha \leqslant \lvert s - t^{\flat} \lvert$, the fact that $\alpha^2 + r^2 (s - t^{\flat})^2 \geqslant r_0^2 (s - t^{\flat})^2$ and the control on $\sigma_2$, there exists $C > 0$ such that
\begin{equation}
\label{eq:majr2}
\lvert \sigma \lvert \, \leqslant 2 C \lvert s - t^{\flat}\lvert \Big( \dfrac{1}{r_0^2} + \frac{1}{r_0} \Big) + C^2 \dfrac{ \lvert s - t^{\flat} \lvert^2}{r_0^2}.
\end{equation}

\noindent It follows that there exists a constant $\delta_0$ (that depends on \(M \) and \(r_0 \)) such that for \( \lvert s - t^{\flat} \lvert \, \leqslant \delta_0 \), we have \( \lvert \sigma \lvert \, \leqslant \dfrac{1}{2}.\) Using Lemma ~\ref{lem:estim1} of Appendix ~\ref{appendixB}, we obtain that for \( \lvert s - t^{\flat} \lvert \, \leqslant \delta_0 \)
\begin{equation*}
    \dfrac{1}{\lvert w(q_{\pm}(s)) \lvert} = \dfrac{1}{\sqrt{\alpha^2 + r^2 (s - t^{\flat})^2}} ( 1 + C \, \sigma).
\end{equation*}

\noindent In view of the expression of $\sigma$, we deduce that 
\begin{equation*}
\label{TEinvw}
    \dfrac{1}{\lvert w(q_{\pm}(s)) \lvert} = \dfrac{1}{\sqrt{\alpha^2 + r^2 (s - t^{\flat})^2}} + \sigma , \quad \text{ with } \lvert \sigma \lvert \, \leqslant C \dfrac{\lvert s-t^{\flat}\lvert^3}{(\alpha^2 + r^2 (s - t^{\flat})^2)^{\frac{3}{2}}} \text{ and } C > 0.  \tag{b} 
\end{equation*}

\noindent Using the Taylor-Lagrange formula again, we also have
\begin{align}
    & \nabla v(q_{\pm}(s)) = \nabla v(q^{\flat}) + \sigma_1, \tag{c} \label{TEgradv}\\
    & \transpose{\diff w(q_{\pm}(s))} w(q_{\pm}(t)) = \transpose{\diff w(q^{\flat})} \Big( \alpha \Vec{e}_{\theta}^{\perp} + r (s - t^{\flat})\Vec{e}_{\theta} \Big) + \sigma_2. \tag{d} \label{TEdw}
\end{align}

\noindent Since $\alpha^2 + r^2 (s - t^{\flat})^2 \geqslant r_0^2 (s - t^{\flat})^2$, we deduce
\begin{equation*}
\dfrac{\transpose{\diff w(q_{\pm}(s))} w(q_{\pm}(s))}{\lvert w(q_{\pm}(s)) \lvert} = \dfrac{\transpose{\diff w(q^{\flat})}}{\sqrt{\alpha^2 + r^2 (s - t^{\flat})^2}} \Big( \alpha \Vec{e}_{\theta}^{\perp} + r (s - t^{\flat}) \Vec{e}_{\theta} \Big) + \sigma_1.
\end{equation*}

\noindent Summing up, we obtain
\begin{equation*}
\dot{p}_{\pm}(s) = - \nabla v(q_{\pm}^{\flat}) \mp \dfrac{\transpose{\diff w(q^{\flat})}}{\sqrt{\alpha^2 + r^2 (s - t^{\flat})^2}} \Big( \alpha \Vec{e}_{\theta}^{\perp} + r (s - t^{\flat}) \Vec{e}_{\theta} \Big) + \sigma_1.
\end{equation*}
\noindent Integrating between $t$ and $t^{\flat}$ for $t$ satisfying $\lvert t - t^{\flat} \lvert \, \leqslant \delta_0$, we find that
\begin{multline*}
p_{\pm}(t) = p^{\flat} - \nabla v(q_{\pm}^{\flat}) (t - t^{\flat}) \mp \transpose{\diff w(q^{\flat})} \Vec{e}_{\theta}^{\perp} \times \alpha \ds\int_t^{t^{\flat}} \dfrac{\diff s}{\sqrt{\alpha^2 + r^2 (s - t^{\flat})^2}} \\ \mp r \transpose{\diff w(q^{\flat})} \Vec{e}_{\theta} \ds\int_t^{t^{\flat}} \dfrac{(s - t^{\flat})}{\sqrt{\alpha^2 + r^2 (s - t^{\flat})^2}} \diff s + \sigma_2.
\end{multline*}
\noindent But, using the change of variable $u = \dfrac{r}{\alpha} (s - t^{\flat})$ and computations of Appendix \ref{appendixB}, we have
\begin{align*}
    \alpha \ds\int_t^{t^{\flat}} \dfrac{\diff s}{\sqrt{\alpha^2 + r^2 (s - t^{\flat})^2}} & = \dfrac{\alpha}{r} \ds\int_0^{\frac{r}{\alpha}(t - t^{\flat})} \dfrac{\diff u}{\sqrt{1 + u^2}} = \dfrac{\alpha}{r} \Argsh{\dfrac{r}{\alpha}(t-t^{\flat})},\\
    \ds\int_t^{t^{\flat}} \dfrac{(s - t^{\flat})}{\sqrt{\alpha^2 + r^2 (s - t^{\flat})^2}} \diff s & = \dfrac{\alpha}{r^2} \ds\int_0^{\frac{r}{\alpha}(t - t^{\flat})} \dfrac{u}{\sqrt{1 + u^2}} \diff u = \dfrac{\alpha}{r^2} \Bigg[ \sqrt{1 + \frac{r^2 (t-t^{\flat})^2}{\alpha^2}} - 1 \Bigg].
\end{align*}

\noindent By simplifying the expressions and identifying the first terms with the Taylor expansion of $p_0$, we obtain the result for $p_{\pm}$. For $q_{\pm}$, we simply apply the computations from Appendix \ref{appendixB} with the same change of variable as before and integrate the Taylor expansion of $p_{\pm}$ between $t$ and $~t^{\flat}$.
\end{proof}

\begin{remark}
    Equation \eqref{eq:majr2} ensures the existence of \( \delta_0 \) and shows that \( \delta_0 \) decreases as \( r_0 \) decreases. However, it is important to note that \( \delta_0 \) does not depend on the semiclassical parameter \( \varepsilon \), unlike \( \delta \), which will be specified at the end of the paper as a function that tends to $0$ as \( \varepsilon \to 0 \). Consequently, it will always be possible to ensure that \( \delta \leqslant \delta_0 \) by choosing \( \varepsilon \) small enough and the controls of Proposition \ref{prop:DLpq} hold for $\lvert t - t^{\flat} \lvert \, \leqslant \delta$.
\end{remark}

\noindent Concerning the trajectories $(\widetilde{q}_{\pm}, \widetilde{p}_{\pm})$, a similar reasoning is used. The main difference is the condition at $t^{\flat}$. The expansions \eqref{TEw}, \eqref{TEgradv}, \eqref{TEinvw}, \eqref{TEdw} remain valid, relying on the fact that $\widetilde{p}^{\flat} = p^{\flat} + \sigma$ with $\lvert \sigma \lvert \, \leqslant \alpha$ and the assumption $\alpha \, \leqslant \lvert t - t^{\flat} \lvert$. After manipulating these expansions, we integrate between $t$ and $t^{\flat}$, using $\widetilde{z}^{\flat}$ as condition at time ~$t^{\flat}$. This implies the following proposition.

\begin{prop}
\label{prop:DLtildez} There exist $\delta_0, \,C > 0$ such that under the same assumptions as in Proposition \ref{prop:DLpq}, we have
    \begin{equation*}
    \left\lbrace 
    \begin{array}{cll}
        \widetilde{p}_{\pm}(t)& = p_{\pm}(t) + \delta_{p^{\flat}} + \sigma_{\widetilde{p}_{\pm}}, \quad \text{with } \lvert \sigma_{\widetilde{p}_{\pm}}\lvert \, \leqslant C \lvert t - t^{\flat}\lvert^2,\\
        \widetilde{q}_{\pm}(t) & = q_{\pm}(t) + \delta_{p^{\flat}}(t-t^{\flat}) + \sigma_{\widetilde{q}_{\pm}}, \quad \text{with } \lvert \sigma_{\widetilde{q}_{\pm}}\lvert \, \leqslant C \lvert t - t^{\flat}\lvert^3.
    \end{array}
    \right.
    \end{equation*}
\end{prop}

\subsection{The actions}

\noindent The following proposition provides a comparison of the actions $S_{\pm}(t;t^{\flat},z^{\flat})$ and $\widetilde{S}_{\pm}(t;t^{\flat},\widetilde{z}^{\flat})$ defined in \eqref{Actions} and \eqref{Actionstilde} with $S_0(t;t^{\flat}, z^{\flat})$ defined in \eqref{actionzero}. 

\begin{prop} \label{prop:DLaction} There exist $\delta_0, \,C > 0$ such that under the same assumptions as in Proposition \ref{prop:DLpq}, we have
\begin{multline*}
S_{\pm}(t;t_0,z_0) = S_{\pm}^{\flat} + S_0(t;t^{\flat}, z^{\flat}) \pm \alpha (t - t^{\flat}) \mp (t - t^{\flat}) \sqrt{\alpha^2 + r^2 (t-t^{\flat})^2} \mp \dfrac{\alpha^2}{r} \Argsh{\dfrac{r}{\alpha}(t - t^{\flat})} + \, \sigma_{S_{\pm}},
\end{multline*}
\begin{equation*}
\widetilde{S}_{\pm}(t;t_0,z_0) = \widetilde{S}_{\pm}^{\flat} + S_{\pm}(t;t^{\flat},z^{\flat}) + \delta_{p^{\flat}} \cdot p^{\flat} (t - t^{\flat}) + \, \sigma_{\widetilde{S}_{\pm}}
\end{equation*}
where $S_{\pm}^{\flat} = S_{\pm}(t^{\flat};t_0,z_0)$, $\widetilde{S}_{\pm}^{\flat}$ is defined in \eqref{Actionstilde},
\begin{equation*}
S_0(t;t^{\flat},z^{\flat}) = \Big( \dfrac{\lvert p^{\flat} \lvert^2}{2} - v(q^{\flat}) \Big) (t - t^{\flat}) - p^{\flat} \cdot \nabla v(q^{\flat}) (t - t^{\flat})^2 + \, \sigma_{S_{0}}
\end{equation*}
and
\[ \lvert \sigma_{S_{\pm}}\lvert \, \leqslant C \lvert t - t^{\flat}\lvert^3, \quad \lvert \sigma_{\widetilde{S}_{\pm}}\lvert \, \leqslant C \lvert t - t^{\flat}\lvert^3 \quad \text{and} \quad  \lvert \sigma_{S_{0}}\lvert \, \leqslant C \lvert t - t^{\flat}\lvert^3. \]
\end{prop}

\begin{proof}[Proof of Proposition~\ref{prop:DLaction}]
Let $z_0$ be a point of $\mathcal{A}_{\pm}(M,\alpha_0,r_0,\delta,\mathrm{I})$. We consider \(t \in \mathrm{I} \) such that $t$ belongs to $[t^{\flat} + \alpha, t^{\flat} + \delta]$. We consider $s \in [t^{\flat} + \alpha, t ]$ (the proof is the same if $t \in [t^{\flat} - \delta, t^{\flat} - \alpha ]$ and $s \in [t , t^{\flat} - \alpha ]$). Knowing that the functions $t \in \mathrm{I} \mapsto h_{\pm}(z_{\pm}(t))$ and $t \mapsto h_0(z_0(t))$ are constants, we can write
    \begin{equation*}
        \dot{S}_{\pm}(s;t_0,z_0) = \lvert p_{\pm}(s) \lvert^2 - \, h_{\pm}(z^{\flat}) \quad \text{and} \quad \dot{S}_{0}(s;t_0,z_0) = \lvert p_0(s) \lvert^2 - \, h_{0}(z^{\flat}).
    \end{equation*}

\noindent According to Proposition \ref{prop:DLpq}, there exists $\delta_0$ such that for $\alpha \leqslant \lvert s - t^{\flat} \lvert \, \leqslant \lvert t - t^{\flat} \lvert \, \leqslant\delta_0 $, we have
\[ p_{\pm}(s) = p_0(s) \mp A \mp B + \sigma_{p_\pm}\]
with
\begin{itemize}
    \item $\lvert \sigma_{\pm} \lvert \leqslant C  \lvert t - t^{\flat}\lvert^2$ (according Proposition \ref{prop:DLpq}),
    \item $p_0(s) = p^{\flat} \, + \, \sigma_1$ (see Equation \eqref{DLp0q0}),
    \item $p^{\flat} \cdot A = \sqrt{\alpha^2 + r^2(s - t^{\flat})^2} - \alpha$ and $p^{\flat} \cdot B = 0$ (as $\diff w(q^{\flat})p^{\flat} = r \Vec{e}_{\theta}$ and $ \Vec{e}_{\theta} \cdot  \Vec{e}_{\theta}^{\perp} = 0$),
    \item $\lvert A \lvert , \lvert B \lvert \, \leqslant C \lvert t - t^{\flat}\lvert$ (using $r \geqslant r_0$, $\alpha \leqslant \lvert s - t^{\flat} \rvert$ and the fact that the function $u \mapsto \dfrac{\mathrm{Argsh}(u)}{u}$ is bounded).
\end{itemize}  

\noindent Then, thanks to the relation \( h_{\pm}(z^{\flat}) =  h_{0}(z^{\flat}) \pm \alpha \), computations gives
\begin{equation*}
\lvert p_{\pm}(s) \lvert^2 = \lvert p_0(s) \lvert^2 - \, h_0(z^{\flat}) \mp \alpha \mp 2 \Big( \sqrt{\alpha^2 + r^2(s-t^{\flat})^2} - \alpha \Big) \, + \,  \sigma_1.
\end{equation*}
        
\noindent Therefore, according to the definition of $\dot{S}_0(s;t_0,z_0)$, we have
\vspace{-0.1cm}
\begin{equation*}
\lvert p_{\pm}(s) \lvert^2 = \dot{S}_0(s;t_0,z_0) \pm \alpha \mp 2 \sqrt{\alpha^2 + r^2(s-t^{\flat})^2} \, + \,  \sigma_1.
\end{equation*}

\noindent Thanks to the integrals calculated in Appendix \ref{appendixB}, integrating  between $t$ and $t^{\flat}$ for ~$t$ satisfying $\alpha \leqslant \lvert t - t^{\flat} \lvert \, \leqslant \delta_0$, we achieve the desired result for $S_{\pm}(t;t_0,z_0)$. Regarding $\widetilde{S}_{\pm}(t;t_0,z_0)$, the first thing to note is
\vspace{-0.1cm}
\begin{align*}
    \dot{\widetilde{S}}_{\pm}(s;t_0,z_0) & = \lvert p_{\pm}(s) + \delta_{p^{\flat}} \lvert^2 - \dfrac{\lvert p^{\flat} + \delta_{p^{\flat}} \lvert^2}{2} - v(q^{\flat}) \mp \alpha \\
    & = \lvert p_{\pm}(s) \lvert^2 + \, 2 p_{\pm}(s) \cdot \delta_{p^{\flat}} - \dfrac{\lvert p^{\flat} \lvert^2}{2} - p^{\flat} \cdot \delta_{p^{\flat}} - v(q^{\flat}) \mp \alpha + \sigma \quad \text{with } \lvert \sigma \lvert \, \leqslant C \alpha^2.
\end{align*}

\noindent By Proposition \ref{prop:DLpq}, we have that $p_{\pm}(s) = p^{\flat} + \sigma_1$ (with $\sigma_1 \, \leqslant C \lvert s-t^{\flat} \lvert$) and since $\lvert \delta_{p^{\flat}} \lvert \, \leqslant C \alpha $ with $\alpha \leqslant \lvert s-t^{\flat} \lvert$, we have
\vspace{-0.1cm}
\begin{equation*}
    2 p_{\pm}(s) \cdot \delta_{p^{\flat}} - p^{\flat} \cdot \delta_{p^{\flat}} = p^{\flat} \cdot \delta_{p^{\flat}} + \, \sigma_2.
\end{equation*}

\noindent Thus, we deduce that
\vspace{-0.1cm}
\begin{align*}
    \dot{\widetilde{S}}_{\pm}(s;t_0,z_0) & = \lvert p_{\pm}(s) \lvert^2 - \dfrac{\lvert p^{\flat} \lvert^2}{2} - v(q^{\flat}) \mp \alpha + p^{\flat} \cdot \delta_{p^{\flat}} + \, \sigma_2 \\
    & = \dot{S}_{\pm}(s;t_0,z_0) + p^{\flat} \cdot \delta_{p^{\flat}} + \, \sigma_2
\end{align*}

\noindent and it remains to integrate between $t$ and $t^{\flat}$ to obtain the desired result.
\end{proof}

\subsection{Adiabatic basis}
\label{subsec:studyVec}

\noindent This subsection provides all the necessary details for studying the vectors that guide the wave packets considered in this article. We follow an approach similar to that in ~\cite{FGH2021}, where the case $\alpha = 0$ was established. For clarity and to distinguish our analysis from that of the case $\alpha = 0$, we will denote the eigenprojectors considered in our study by $\Pi_{\pm,\alpha}$ (introduced in \eqref{projspectraux}) and those used in \cite{FGH2021} by $\Pi_{\pm,0}$. Later in this article, we will be able to compare these two projectors by making assumptions on $\alpha$ and using a rescaling argument.\\

\noindent The following proposition constructs the direction of the wave packets before time $t^{\flat}$.

\begin{prop}
\label{prop:Vec}
Let us consider the normalized vectors $\Vec{Y}_{-}$ solution of \eqref{eqVec} for $t \in [t_0, t^{\flat}]$ where
\begin{equation*}
    \Vec{Y}_{-}(t_0) = \Vec{Y}_0 \quad \text{with} \quad V(q_0) \Vec{Y}_0 = \lambda_-(q_0) \Vec{Y}_0.
\end{equation*}
 This vector satisfies
\begin{enumerate}[leftmargin=*, labelindent=0pt]
    \item For all $t \in [t_0, t^{\flat}]$, $\| \Vec{Y}_{-}(t) \| = \| \Vec{Y}_0 \| = 1$.
    \item For all $t \in [t_0, t^{\flat}]$, $\Pi_{-,\alpha}(q_{-}(t)) \Vec{Y}_{-}(t) = \Vec{Y}_{-}(t)$. \label{item1}
    \item There exist $\tau > 0$ and two functions $x \mapsto \Vec{\mathcal{V}}_{-}(x)$ smooth in a neighborhood of $(\Phi_{-}^{t,t^{\flat}}(z^{\flat}))_{t \in [t^{\flat} - \tau, t^{\flat})}$, such that $\Pi_{-} \Vec{\mathcal{V}}_{-} = \Vec{\mathcal{V}}_{-}$ and for all $t \in [t^{\flat} - \tau, t^{\flat})$, $\Vec{Y}_{-}(t) = \vec{\mathcal{V}}_{-}(q_{-}(t))$.
\end{enumerate}
\end{prop}

\begin{proof}[Proof of Proposition ~\ref{prop:Vec}]
\noindent Since the case $\alpha = 0$ is treated in \cite{FGH2021}, we can assume that $\alpha \neq 0$. Then, in this setting, the vector \(\Vec{Y}_{-}\) exists on \([t_0, t^{\flat}]\) by applying the Cauchy--Lipschitz theorem (there are no regularity issues at $t^{\flat}$). The conservation of the norm is obtained by using the fact that the matrix-valued functions $B_{\pm}$, defined in \eqref{matB}, satisfy \[ B_{-} = - \transpose{B_{+}} \quad \text{and} \quad B_{+} \Vec{Y}_{-} = 0. \] Moreover, the assertion that $\vec{Y}_{-}$ is an eigenvector, as well as the construction of the vector $\vec{\mathcal{V}}_{-}$, can be carried out in the same manner as in \cite{FGH2021}, since the initial conditions at time $t_0$ is an eigenvector.
\end{proof}

\noindent The result is the same for $\Vec{Y}_+$, solution of \eqref{eqVec}, if the initial vector at time $t_0$ is normalized and associated with the plus-mode, meaning that 
\begin{equation}
\label{eq:ICvecplus}
    \Vec{Y}_{+}(t_0) = \Vec{Y}_1 \quad \text{with} \quad V(q_0) \Vec{Y}_1= \lambda_+(q_0) \Vec{Y}_1.
\end{equation}

\noindent In Subsection \ref{subsec:newscale}, we will compare the eigenprojectors $\Pi_{\pm,\alpha}$ with the eigenprojector $\Pi_{\pm,0}$ close to ~$t^{\flat}$. The expansion of the projector $\Pi_{\pm,0}$ at time $t^{\flat}$ is obtained in the proof of ~\cite[Proposition $1.5$]{FGH2021} and we recall here that it is given by
\begin{equation}
\label{rq:devpi0} 
\Pi_{\pm,0}(q_{\pm}(t)) = \dfrac{1}{2}\Big( I_2 \,\pm\, \operatorname{sgn}(t - t^{\flat})\, A(\vec e_{\theta}) \Big)+ \mathcal{O}(|t - t^{\flat}|).
\end{equation}
The following proposition, derived from the Taylor expansions \eqref{TEw} and \eqref{TEinvw} proved in Proposition~\ref{prop:DLpq}, provides the Taylor expansion of $\Pi_{\pm,\alpha}$ near $t^{\flat}$.

\begin{prop}
\label{prop:DLPi}
There exist $\delta_0, \,C > 0$ such that under the same assumptions as in Proposition ~\ref{prop:DLpq}, we have
\begin{equation*}
    \Pi_{\pm,\alpha}(q_{\pm}(t)) =  \frac{1}{2} \Bigg( \mathrm{I}_2 \, \pm \, \dfrac{r(t-t^\flat)}{\sqrt{\alpha^2 + r^2(t-t^{\flat})^2}} A(\vec{e}_{\theta}) \, \pm \, \dfrac{\alpha}{\sqrt{\alpha^2 + r^2(t-t^{\flat})^2}} A(\vec{e}_{\theta}^{\perp}) \Bigg) + \, \sigma_{_{\Pi_{\pm,\alpha}}}
\end{equation*}
with \( \lvert \sigma_{_{\Pi_{\pm,\alpha}}} \lvert \, \leqslant C \lvert t - t^{\flat} \lvert.\)
\end{prop}

\noindent In order to construct the outgoing vectors and obtain a result similar to that stated in Proposition ~\ref{prop:Vec} for \( t > t^{\flat} \), we must fix the initial conditions for Equations \eqref{eqVec} and \eqref{eqVectilde} at time \( t^{\flat} \). These vectors must be eigenvectors of $V(q^{\flat})$. To do that, we consider the following eigenprojectors \[ \Pi_{\pm,\alpha}^{\flat} := \Pi_{\pm,\alpha}(q_{\pm}(t^{\flat})).\] Unlike the projector $\Pi_{\pm,\alpha}(q_{\pm})$, the projector $\Pi_{\pm,0}$ is not defined at time $t^{\flat}$. Therefore, we must consider the limits as $t$ approaches $t^{\flat}$. Thanks to Equation \eqref{rq:devpi0}, we define
\begin{equation*}
    \Pi_{\pm,0}^{\flat,-} := \lim\limits_{t \to t^{\flat}, \, t < t^{\flat}} \Pi_{\pm,0}(q_{\pm}(t)) = \dfrac{1}{2}\Big( I_2 \,\mp\, A(\vec e_{\theta}) \Big) \quad \text{and} \quad \Pi_{\pm,0}^{\flat,+} := \lim\limits_{t \to t^{\flat}, \, t > t^{\flat}} \Pi_{\pm,0}(q_{\pm}(t)) = \dfrac{1}{2}\Big( I_2 \,\pm\, A(\vec e_{\theta}) \Big).
\end{equation*}
Then, we set
\begin{equation}
\label{eq:ICvecafter}
    \Vec{Y}^{\flat} = \Pi_{-,0}^{-,\flat} \Bigg( \lim\limits_{t \to t^{\flat}, \, t < t^{\flat}} \vec{Y}_{-}(t) \Bigg) \quad \text{and} \quad \Vec{Y}^{\flat\perp} = \begin{pmatrix}
            0 & -1 \\
            1 & 0
        \end{pmatrix} \Vec{Y}^{\flat}.
\end{equation}

\begin{remark}
    By construction, the vector $\Vec{Y}^{\flat}$ is a normalized eigenvector of the matrix $A(\vec e_{\theta})$ associated with the eigenvalue $1$. Since $\Vec{Y}^{\flat\perp}$ is the $\frac{\pi}{2}$–rotation of $\Vec{Y}^{\flat}$, it is a normalized eigenvector of the matrix $A(\vec e_{\theta})$ associated with the eigenvalue $-1$.
\end{remark}

\noindent The following proposition summarized a similar result to that stated in Proposition \ref{prop:Vec} for \( t > t^{\flat} \). The initial conditions are well chosen at \( t^{\flat}\), using the projection of the vectors \( \Vec{Y}^{\flat\perp} \) and \( \Vec{Y}^{\flat} \) on the minus and the plus eigenspaces of $V(q^{\flat})$, thanks to the eigenprojectors $\Pi_{\pm,\alpha}^{\flat}$.

\begin{prop}
\label{prop:Vecaftercrossing}
Let us consider $T = \text{max}(\mathrm{I})$ and the normalized vectors $\Vec{Y}_{-}$ and $\Vec{\widetilde{Y}_{+}}$ solutions of 
\begin{equation*}
    \left\lbrace 
    \begin{array}{cll}
        \partial_t  \Vec{Y}_{-} & = & B_{-}(q_{-},p_{-}) \Vec{Y}_{-}\\
        \Vec{Y}_{-}(t^{\flat}) & = & \Pi_{-,\alpha}^{\flat} \Vec{Y}^{\flat\perp}
    \end{array}
    \right. \quad \text{and} \quad \left\lbrace 
    \begin{array}{cll}
        \partial_t  \Vec{\widetilde{Y}_{+}} & = & B_{+}(\widetilde{q}_{+},\widetilde{p}_{+}) \Vec{\widetilde{Y}_{+}}\\
        \Vec{\widetilde{Y}_{+}}(t^{\flat}) & = & \Pi_{+,\alpha}^{\flat} \Vec{Y}_{\flat}
    \end{array}
    \right.
\end{equation*}
on $[t^{\flat},T]$. These vectors satisfy
\begin{enumerate}[leftmargin=*, labelindent=0pt]
    \item For all $t \in [t^{\flat},T]$, $\| \Vec{Y}_{-}(t) \| = \| \Vec{Y}_{-}(t^{\flat}) \|$ and $\| \Vec{\widetilde{Y}}_{+}(t) \| = \| \Vec{\widetilde{Y}}_{+}(t^{\flat}) \|$.
    \item For all $t \in [t^{\flat},T]$, $\Pi_{-,\alpha}(q_{-}(t)) \Vec{Y}_{-}(t) = \Vec{Y}_{-}(t)$ and $\Pi_{+,\alpha}(\widetilde{q}_{+}(t)) \Vec{\widetilde{Y}}_{+}(t) = \Vec{\widetilde{Y}}_{+}(t)$. \label{item1after}
    \item There exist $\tau > 0$ and two functions $x \mapsto \vec{\mathcal{V}}_{\pm}(x)$ smooth in a neighborhood of $(\Phi_{\pm}^{t,t^{\flat}}(z^{\flat}))_{t \in (t^{\flat}, t^{\flat} + \tau]}$, such that $\Pi_{\pm} \vec{\mathcal{V}}_{\pm} = \vec{\mathcal{V}}_{\pm}$ and for all $t \in (t^{\flat}, t^{\flat} + \tau]$, $\Vec{Y}_{-}(t) = \vec{\mathcal{V}}_{-}(q_{-}(t))$ and $\Vec{\widetilde{Y}}_{+}(t) = \vec{\mathcal{V}}_{+}(\widetilde{q}_{+}(t))$.
\end{enumerate}
\end{prop}

\begin{proof}[Proof of Proposition~\ref{prop:Vecaftercrossing}]
    The proof uses the same argument as in the proof of Proposition ~\ref{prop:Vec} since by definition $\Pi_{-,\alpha}^{\flat} \Vec{Y}^{\flat\perp}$ and $\Pi_{+,\alpha}^{\flat} \Vec{Y}_{\flat}$ are eigenvectors of $V(q^{\flat})$ associated with the respective eigenvalues $\lambda_{-}(q^{\flat})$ and $\lambda_{+}(q^{\flat})$.
\end{proof}

\noindent Of course, if the initial condition at time $t_0$ is associated with the plus-mode, as stated in Equation ~\eqref{eq:ICvecplus}, we can prove a similar result as long as the initial condition at time $t^{\flat}$ is well chosen. This is detailed in Appendix ~\ref{appendix:autremode}.

\begin{remark}
    The initial data introduced in Proposition ~\ref{prop:Vecaftercrossing} are normalized up to a remainder that tends to zero as $\varepsilon$ tends to zero. This is a consequence of Proposition ~\ref{lem:veclimit}, which is stated in Subsection ~\ref{subsec:newscale} below.
\end{remark}

%%%%%%%%%%%%%%%%%%%%%%%%%%%%%%%%%%%%%%%%%%%%%%%%%%%%%%%%%%%

\section{Profiles of the approximate solution}
\label{sec:profiles}

\noindent We remain in the same setting as in Section~\ref{sec:analysisCQ}. We fix \((M, \alpha_0,r_0,\delta) \in \R_{+}^{*} \times \R_{+} \times \R_{+}^{*} \times (0, 1] \) and \( \mathrm{I} \subset \mathbb{R} \) a compact interval with \( t_0 = \min(\mathrm{I}) \). Then, we consider an initial point $z_0 \in \mathcal{A}_{\pm}(M,\alpha_0,r_0,\delta,\mathrm{I})$ and we are interested in the two Schrödinger equations with a time-dependent harmonic potential, defined in \eqref{eqProfil} and \eqref{eqProfiltilde}, which we recall

\begin{equation*}
i \partial_t u_{\pm} = - \dfrac{1}{2} \Delta u_{\pm} + \dfrac{1}{2} \text{Hess} \, \lambda_{\pm}(q_{\pm}) y \cdot y u_{\pm} \quad \text{and} \quad i \partial_t \widetilde{v}_{\pm} = - \dfrac{1}{2} \Delta \widetilde{v}_{\pm} + \dfrac{1}{2} \text{Hess} \, \lambda_{\pm}(\widetilde{q}_{\pm}) y \cdot y \widetilde{v}_{\pm}.
\end{equation*}

\noindent These equations will be coupled with an initial condition in $\schwartz{\R^d}$ either at time $t_0$ or $t^{\flat} + \delta$. For $\alpha \neq 0$, we observe that the functions $t \mapsto \text{Hess} \, \lambda_{\pm}(q_{\pm}(t))$ and $t \mapsto \text{Hess} \, \lambda_{\pm}(\widetilde{q}_{\pm}(t))$ are smooth. Let us introduce the operators $$Q_{\pm}(t) = -\dfrac{1}{2} \Delta + \dfrac{1}{2} \text{Hess} \, \lambda_{\pm}(q_{\pm}(t)) y \cdot y \quad \text{and} \quad \widetilde{Q}_{\pm}(t) = -\dfrac{1}{2} \Delta + \dfrac{1}{2} \text{Hess} \, \lambda_{\pm}(\widetilde{q}_{\pm}(t)) y \cdot y.$$
\noindent Noticing that the classical symbols of the operators $Q_{\pm}$ and $\widetilde{Q}_{\pm}$ satisfy subquadratic estimates, this leads to the existence of solutions according to \cite{MasperoRobert17}. Since the potentials of Equations ~\eqref{eqProfil} and ~\eqref{eqProfiltilde} are real-valued, the $L^2$-norm of the solutions is conserved. This is summarized in the following proposition.

\begin{prop}
\label{prop:existenceu}
For all $z_0 \in \mathcal{A}_{\pm}(M,\alpha_0,r_0, \delta, \mathrm{I})$, there exist solutions $u_{\pm}$ and $\widetilde{u}_{\pm}$ to \eqref{eqProfil} and \eqref{eqProfiltilde} on ~$\mathrm{I}$ with $\varphi_{\pm} \in \schwartz{\R^d}$ and $\widetilde{\varphi}_{\pm} \in \schwartz{\R^d}$ as initial conditions at a time $t_{\text{init}} \in \mathrm{I}$. For any $t \in \mathrm{I}$, $u_{\pm}(t)$ and $\widetilde{u}_{\pm}(t)$ belong to $\schwartz{\R^d}$ and $$\normLp{u_{\pm}(t)}{2}{\R^d} = \normLp{\varphi_\pm}{2}{\R^d}, \quad \normLp{\widetilde{u}_{\pm}(t)}{2}{\R^d} = \normLp{\widetilde{\varphi}_\pm}{2}{\R^d}.$$
\end{prop}

\subsection{Analysis of the Hessian of the eigenvalues along the trajectories}
\label{subsec:hessian}

\noindent We now focus on the asymptotic behavior of the Hessian of the eigenvalues $\lambda_{\pm}$ along the classical trajectories when ~$t$ is close to ~$t^{\flat}$ to study the equations \eqref{eqProfil} and \eqref{eqProfiltilde} close to the crossing.\\

\noindent For all $t \in \R$, we introduce $\Gamma_\alpha(t)$ the following matrix
\begin{equation}
\label{def:Gammaalpha}
\Gamma_\alpha (t) = \dfrac{1}{r} \Bigg( \partial_i w(q^{\flat}) \cdot \partial_j w(q^{\flat}) - \dfrac{r^2(t-t^{\flat})^2(\partial_i w(q^{\flat}) \cdot \Vec{e}_{\theta})(\partial_j w(q^{\flat}) \cdot \Vec{e}_{\theta})}{\alpha^2 + r^2(t-t^{\flat})^2} \Bigg)_{1 \leqslant i,j \leqslant d}.
\end{equation}

\noindent We start by analyzing the asymptotics of $\text{Hess} \, \lambda_{\pm}(q_{\pm})$ close to $t^{\flat}$.

\begin{prop}
\label{prop:DLHesslambda} For all $M > 0$ and $r_0 > 0$, there exist $\delta_0$ and a constant $C > 0$ such that the following holds. We consider $(\alpha_0, \delta) \in \R_+ \times \R_+^*$, $\mathrm{I} \subset \R$ a compact interval with $t_0 = \text{min}(\mathrm{I})$ and $z_0 \in \mathcal{A}_{\pm}(M,\alpha_0,r_0,\delta,\mathrm{I})$. For all $t \in \mathrm{I}$ satisfying $\alpha \leqslant | t - t^{\flat} | \leqslant \delta_0$, there exist two smooth matrix-valued functions $m_{\pm}^{\alpha}(t)$ such that
\vspace{-0.4cm}
\begin{equation*}
\text{Hess} \, \lambda_{\pm}(q_{\pm}(t)) = \underbrace{m_{\pm}^{\alpha}(t) + \sigma(t)}_{= M_{\pm}^{\alpha}(t)} \pm \dfrac{r}{\sqrt{\alpha^2 + r^2 (t - t^{\flat})^2}} \Gamma_{\alpha}(t), \quad \text{with } |\sigma(t)| \leqslant \dfrac{C |t - t^{\flat}|^3}{\left( \alpha^2 + r^2 (t - t^{\flat})^2 \right)^{3/2}}.
\end{equation*}
\noindent Moreover, for $(i,j) \in [\![ 1 , d ]\!]^2$, the $(i,j)$-coefficient of the matrix $m_{\pm}^{\alpha}(t)$ is given by
\vspace{-0.4cm}
\begin{multline*}
(m_{\pm}^{\alpha}(t))_{i,j} = \partial_{i,j}^2 v (q^{\flat}) \pm \dfrac{\partial_{i,j}^2 w(q^{\flat}) \cdot \left( \alpha \vec{e}_{\theta}^{\perp} + r(t - t^{\flat}) \vec{e}_{\theta} \right)}{\sqrt{\alpha^2 + r^2 (t - t^{\flat})^2}} \mp \dfrac{1}{\left( \alpha^2 + r^2 (t - t^{\flat})^2 \right)^{3/2}} \times \\ \Bigg[ 
\alpha^2 (\partial_i w(q^{\flat}) \cdot \vec{e}_{\theta}^{\perp})(\partial_j w(q^{\flat}) \cdot \vec{e}_{\theta}^{\perp}) + \, r \alpha (t - t^{\flat}) \left( (\partial_i w(q^{\flat}) \cdot \vec{e}_{\theta}^{\perp})(\partial_j w(q^{\flat}) \cdot \vec{e}_{\theta}) + (\partial_i w(q^{\flat}) \cdot \vec{e}_{\theta})(\partial_j w(q^{\flat}) \cdot \vec{e}_{\theta}^{\perp}) \right) 
\Bigg].
\end{multline*}
\end{prop}

\begin{remark}
The matrix $\Gamma_0 \in \R^{d \times d}$ given by \(\Gamma_0 = \dfrac{1}{r} \transpose{\diff w(q^{\flat})} (I_2 - \Vec{e}_{\theta} \otimes \Vec{e}_{\theta}) \diff w(q^{\flat}) \) can be rewritten in terms of its coefficients as follows
\begin{equation*}
\Gamma_0 = \dfrac{1}{r} \Big( \partial_i w(q^{\flat}) \cdot \partial_j w(q^{\flat}) - (\partial_i w(q^{\flat}) \cdot \Vec{e}_{\theta})(\partial_j w(q^{\flat}) \cdot \Vec{e}_{\theta} \Big)_{1 \leqslant i,j \leqslant d}
\end{equation*}
\noindent and it is the one used in \cite{FGH2021}. Noticing that $\Gamma_{\alpha}(t) = \Gamma_0 + \dfrac{\alpha^2}{\alpha^2 + r^2 (t-t^{\flat})^2} \Gamma_1$ for all $t \in \R$, with $\Gamma_1$ the matrix given in \eqref{matGamma1}, we obtain the same result as \cite[Lemma $2.4$]{FGH2021} in the case $\alpha = 0$.
\end{remark}

\begin{proof}[Proof of Proposition~\ref{prop:DLHesslambda}]
\noindent By definition, $\lambda_{\pm} = v \pm \lvert w \lvert$. Then, for all $(i,j) \in [\![1 , d ]\!]^2$
\begin{equation*}
\partial_{i,j}^2 \lambda_{\pm} = \underset{T_1}{\underbrace{\partial_{i,j}^2 v}} \mp \partial_{i,j}^2 (\lvert w \lvert).
\end{equation*}

\noindent But, computations give

\begin{equation*}
\partial_{i,j}^2 (\lvert w \lvert) = \underset{T_2}{\underbrace{\dfrac{\partial_i w \cdot \partial_j w}{\lvert w \lvert}}} + \underset{T_3}{\underbrace{\partial_{i,j}^2 w \cdot \dfrac{w}{\lvert w \lvert}}} - \underset{T_4}{\underbrace{\dfrac{(\partial_i w \cdot w)(\partial_j w \cdot w)}{\lvert w \lvert^3}}}.
\end{equation*}

\noindent Let $z_0$ be a point of $\mathcal{A}_{\pm}(M,\alpha_0,r_0,\delta,\mathrm{I})$. We consider \(t \in \mathrm{I} \) such that $t$ belongs to $[t^{\flat} + \alpha, t^{\flat} + \delta]$. We consider $s \in [t^{\flat} + \alpha, t ]$ (the proof is the same if $t \in [t^{\flat} - \delta, t^{\flat} - \alpha ]$ and $s \in [t , t^{\flat} - \alpha ]$). Using the expansions \eqref{TEw} and \eqref{TEinvw} stated in the proof of Proposition \ref{prop:DLpq}, we obtain that there exists $\delta_0 > 0$ such that the following expansions hold for $\alpha \, \leqslant \lvert t - t^{\flat} \lvert \, \leqslant \delta_0$

\begin{align*}
    T_1 = & \, \, \partial_{i,j}^2 v (q^{\flat}) + \, \sigma_1 \quad \text{with } \lvert \sigma_1 \lvert \, \leqslant C_1 \lvert t-t^{\flat} \lvert \text{ and } C_1 > 0,\\
    T_2 = & \, \, \dfrac{\partial_i w(q^{\flat}) \cdot \partial_j w(q^{\flat})}{\sqrt{\alpha^2 + r^2 (t-t^{\flat})^2}} + \, \sigma_2 \quad \text{with } \lvert \sigma_1 \lvert \, \leqslant \frac{ C_2 \lvert t - t^{\flat} \lvert^3}{\Big( \alpha^2 + r^2 (t-t^{\flat})^2 \Big)^{\frac{3}{2}}} \text{ and } C_2 > 0, \\
    T_3 = & \, \, \dfrac{\partial_{i,j}^2 w(q^{\flat}) \Big( \alpha \Vec{e}_{\theta}^{\perp} + r(t - t^{\flat}) \Vec{e}_{\theta} \Big)}{\sqrt{\alpha^2 + r^2 (t-t^{\flat})^2}} + \, \sigma_3 \quad \text{with } \lvert \sigma_3 \lvert \, \leqslant C_3 \lvert t-t^{\flat} \lvert \text{ and } C_3 > 0,\\
    T_4 =& \, \, \dfrac{\alpha^2 (\partial_i w(q^{\flat}) \cdot \Vec{e}_{\theta}^{\perp})(\partial_j w(q^{\flat}) \cdot \Vec{e}_{\theta}^{\perp}) + r^2 (t-t^{\flat})^2 (\partial_i w(q^{\flat}) \cdot \Vec{e}_{\theta})(\partial_j w(q^{\flat}) \cdot \Vec{e}_{\theta})}{\Big( \alpha^2 + r^2 (t-t^{\flat})^2 \Big)^{\frac{3}{2}}}\\ 
    & + \dfrac{r \alpha (t-t^{\flat}) (\partial_i w(q^{\flat}) \cdot \Vec{e}_{\theta}^{\perp})(\partial_j w(q^{\flat}) \cdot \Vec{e}_{\theta}) + r \alpha (t-t^{\flat}) (\partial_i w(q^{\flat}) \cdot \Vec{e}_{\theta})(\partial_j w(q^{\flat}) \cdot \Vec{e}_{\theta}^{\perp})}{\Big( \alpha^2 + r^2 (t-t^{\flat})^2 \Big)^{\frac{3}{2}}}\\
    &  + \, \sigma_4 \quad \text{with } \lvert \sigma_4 \lvert \, \leqslant \frac{ C_4 \lvert t - t^{\flat} \lvert^5}{\Big( \alpha^2 + r^2 (t-t^{\flat})^2 \Big)^{\frac{5}{2}}} \text{ and } C_4 > 0.
\end{align*}

\noindent Summing up and using the definition \eqref{def:Gammaalpha} of the matrix $\Gamma_{\alpha}$, we deduce that
\begin{align*}
    \partial_{i,j}^2 (\lvert w \lvert) & = \dfrac{r}{\sqrt{\alpha^2 + r^2 (t-t^{\flat})^2}} \Gamma_{\alpha}(t) - \dfrac{1}{\Big( \alpha^2 + r^2 (t-t^{\flat})^2 \Big)^{\frac{3}{2}}} \Bigg[ \alpha^2 (\partial_i w(q^{\flat}) \cdot \Vec{e}_{\theta}^{\perp})(\partial_j w(q^{\flat}) \cdot \Vec{e}_{\theta}^{\perp}) \\ 
    & + r \alpha (t-t^{\flat}) (\partial_i w(q^{\flat}) \cdot \Vec{e}_{\theta}^{\perp})(\partial_j w(q^{\flat}) \cdot \Vec{e}_{\theta}) + r \alpha (t-t^{\flat}) (\partial_i w(q^{\flat}) \cdot \Vec{e}_{\theta})(\partial_j w(q^{\flat}) \cdot \Vec{e}_{\theta}^{\perp}) \Bigg] \\
    & + \dfrac{\partial_{i,j}^2 w(q^{\flat}) \Big( \alpha \Vec{e}_{\theta}^{\perp} + r(t - t^{\flat}) \Vec{e}_{\theta} \Big)}{\sqrt{\alpha^2 + r^2 (t-t^{\flat})^2}} + \, \sigma_5 \quad \text{with } \lvert \sigma_5 \lvert \, \leqslant \frac{ C_5 \lvert t - t^{\flat} \lvert^3}{\Big( \alpha^2 + r^2 (t-t^{\flat})^2 \Big)^{\frac{3}{2}}} \text{ and } C_5 > 0.
\end{align*}

\noindent Consequently, we obtain the desired expansion stated in Proposition \ref{prop:DLHesslambda}.
\end{proof}

\begin{remark}
Under the assumptions of Proposition \ref{prop:DLHesslambda}, since $\lvert \delta_{p^{\flat}} \lvert \, \leqslant C \alpha$ and $\alpha \leqslant \lvert t - t^{\flat} \lvert$, we have
\begin{align*}
w(\widetilde{q}_{\pm}(t)) & = \alpha \Vec{e}_{\theta}^{\perp} + r \Vec{e}_{\theta} (t - t^{\flat}) + \sigma \quad \text{with }\lvert \sigma \lvert \, \leqslant C \Big( \lvert \delta_{p^{\flat}} \lvert \lvert t-t^{\flat} \lvert + \lvert t-t^{\flat} \lvert^2 \Big) \\
& = w(q_{\pm}(t)) + \widetilde{\sigma}, \quad \text{with }\lvert \widetilde{\sigma} \lvert \, \leqslant \widetilde{C} \lvert t-t^{\flat} \lvert^2.
\end{align*}
\noindent Thus, Proposition \ref{prop:DLHesslambda} also holds for $\text{Hess} \, \lambda_{\pm}(\widetilde{q}_{\pm})$.
\end{remark}

\noindent For $\alpha \neq 0$, the two terms that appear in the expansions of $\text{Hess} \, \lambda_{\pm}(q_{\pm})$ close to $t^{\flat}$ are smooth and hence integrable. The main difference between them is that the integral from $t_0$ to $t^{\flat}$ of $M_{\pm}^{\alpha}$ is uniformly bounded with respect to $\alpha$, while the integral of $t \in \mathrm{I} \mapsto \frac{r}{\sqrt{\alpha^2 + r^2 (t- t^{\flat})^2}} \Gamma_{\alpha}(t)$ over the same interval depends on $\alpha$. We start to study the term $M_{\pm}^{\alpha}$. The objective of the subsequent proposition is to examine the independence of the norm and the integral of $M_{\pm}^{\alpha}$ with respect to $\alpha$.

\begin{prop}[Study of $M_{\pm}^{\alpha}$]
\label{prop:Malpha} Under the same assumptions as in Proposition \ref{prop:DLHesslambda}, the matrix-valued function \( M_{\pm}^{\alpha} \), as defined in Proposition \ref{prop:DLHesslambda}, satisfies the following properties
\begin{enumerate}
	\item Let $\tau$ be a positive real number such that $[t^{\flat}, t^{\flat} + \tau] \subset \mathrm{I}$. $M_{\pm}^{\alpha}$ is integrable on $[t_0, t^{\flat}]$ (resp. on $[ t^{\flat} , t^{\flat} + \tau]$) and there exists $C > 0$ (independent of $\alpha$) such that $$\Bigg\lvert \ds\int_{t_0}^{t^{\flat}} M_{\pm}^{\alpha}(s) \dint s \, \Bigg\lvert \leqslant C, \, \, \, \, \Big( \text{resp.} \, \, \Bigg\lvert \ds\int_{t^{\flat}}^{t^{\flat} + \tau} M_{\pm}^{\alpha}(s) \dint s \Bigg\lvert \leqslant C \Bigg).$$
	\item There exists a constant $C > 0$ (independent of $\alpha$) such that for all $t \in \mathrm{I}$, $$\Bigg\lvert 1 + \Big\| M^{\alpha}_{\pm}(t) \Big\|_{\R^{d \times d}} \Bigg\lvert \leqslant \dfrac{C}{\sqrt{\alpha^2 + r^2(t-t^{\flat})^2}}.$$
\end{enumerate}
\end{prop}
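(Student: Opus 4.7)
The plan is to work directly from the explicit expansion of $(M^{\alpha}_\pm(t))_{i,j}$ given at the end of Proposition~\ref{prop:DLHesslambda}. I will split $M^\alpha_\pm(t)$ into four blocks and estimate each separately, uniformly in $\alpha \in [0,1]$:
(a) the constant $\partial^2_{i,j} v(q^\flat)$;
(b) the middle term $\pm\,\partial^2_{i,j} w(q^\flat)(\alpha \Vec{e}_\theta^\perp + r(t-t^\flat)\Vec{e}_\theta)/\sqrt{\alpha^2+r^2(t-t^\flat)^2}$;
(c) the bracket involving the factors $\alpha^2$ and $r\alpha(t-t^\flat)$, divided by $(\alpha^2+r^2(t-t^\flat)^2)^{3/2}$;
(d) the remainder $\mathcal{O}\bigl(|t-t^\flat|^3/(\alpha^2+r^2(t-t^\flat)^2)^{3/2}\bigr)$.
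The whole argument rests on Assumption~\ref{hypsurV2} together with two elementary inequalities and one change of variable.

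For the pointwise bound (item~(2)), the estimates go as follows. Block (a) is bounded by a constant, by Assumption~\ref{hypsurV2}. For block (b), the orthogonality $\Vec{e}_\theta \perp \Vec{e}_\theta^\perp$ and $|\Vec{e}_\theta|=|\Vec{e}_\theta^\perp|=1$ give $|\alpha \Vec{e}_\theta^\perp + r(t-t^\flat)\Vec{e}_\theta| = \sqrt{\alpha^2+r^2(t-t^\flat)^2}$, so the numerator exactly compensates the denominator and block (b) is uniformly bounded. For block (c), I will use $\alpha^2 \leq \alpha^2+r^2(t-t^\flat)^2$ and the AM-GM inequality $2r\alpha|t-t^\flat| \leq \alpha^2+r^2(t-t^\flat)^2$ to cancel one power of $\sqrt{\alpha^2+r^2(t-t^\flat)^2}$ in the denominator, yielding a bound of the form $C/\sqrt{\alpha^2+r^2(t-t^\flat)^2}$. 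Block (d) satisfies $|t-t^\flat|^3/(\alpha^2+r^2(t-t^\flat)^2)^{3/2} \leq 1/r^3$, since $\alpha^2+r^2(t-t^\flat)^2 \geq r^2(t-t^\flat)^2$. Summing the four contributions produces $\|M^\alpha_\pm(t)\| \leq C + C'/\sqrt{\alpha^2+r^2(t-t^\flat)^2}$, and the additive constant (including the $1$) is absorbed using $\sqrt{\alpha^2+r^2(t-t^\flat)^2} \leq \sqrt{1+r^2D^2}$, where $D$ is the length of the time interval, to obtain the claimed estimate $1+\|M^\alpha_\pm(t)\| \leq M/\sqrt{\alpha^2+r^2(t-t^\flat)^2}$.

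For the integrability (item~(1)), blocks (a), (b) and (d) are pointwise uniformly bounded, so their integrals over the compact interval are uniformly bounded in $\alpha$. The only real work concerns block (c). For $\alpha>0$, I perform the change of variable $u = r(t-t^\flat)/\alpha$, which uses $\alpha^2+r^2(t-t^\flat)^2 = \alpha^2(1+u^2)$ and $\diff t = (\alpha/r)\,\diff u$ to give
\begin{align*}
\int \dfrac{\alpha^2}{(\alpha^2+r^2(t-t^\flat)^2)^{3/2}}\, \diff t &= \dfrac{1}{r}\int \dfrac{\diff u}{(1+u^2)^{3/2}}, \\
\int \dfrac{r\alpha(t-t^\flat)}{(\alpha^2+r^2(t-t^\flat)^2)^{3/2}}\, \diff t &= \dfrac{1}{r}\int \dfrac{u\, \diff u}{(1+u^2)^{3/2}}.
\end{align*}
Both right-hand integrands are absolutely integrable on $\mathbb{R}$, so the integrals over $[t_0,t^\flat]$ (or over $[t^\flat,t^\flat+\tau]$, treated symmetrically) are bounded by their global counterparts on $\mathbb{R}$, uniformly in $\alpha$. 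At $\alpha=0$, every summand of block (c) vanishes pointwise since it carries a factor $\alpha$ or $\alpha^2$, so the bound extends by continuity to the full range $\alpha \in [0,1]$.

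The main obstacle is mostly bookkeeping: one must track which constants depend on $\alpha$ and which do not, especially in block~(c). The only genuinely nontrivial point is that the substitution $u = r(t-t^\flat)/\alpha$ completely decouples the $\alpha$-dependence from the integrand, leaving it only in the limits of integration, so uniform integrability follows from the integrability of $(1+u^2)^{-3/2}$ and $u(1+u^2)^{-3/2}$ on $\mathbb{R}$. No deeper analytic difficulty is expected; the proposition follows from the structural information encoded in Proposition~\ref{prop:DLHesslambda} combined with these elementary estimates.
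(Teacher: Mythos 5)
Your proposal is correct and follows essentially the same route as the paper: decompose $M_{\pm}^{\alpha}$ according to the expansion in Proposition~\ref{prop:DLHesslambda}, bound each piece uniformly in $\alpha$ via the elementary inequalities $\alpha^2 \leqslant \alpha^2 + r^2(t-t^{\flat})^2$, $2\alpha r\lvert t-t^{\flat}\rvert \leqslant \alpha^2 + r^2(t-t^{\flat})^2$ and $r^2(t-t^{\flat})^2 \leqslant \alpha^2 + r^2(t-t^{\flat})^2$, and handle the genuinely $\alpha$-sensitive integrals with the substitution $u = r(t-t^{\flat})/\alpha$. The only cosmetic difference is that you invoke absolute integrability of $(1+u^2)^{-3/2}$ and $u(1+u^2)^{-3/2}$ on $\R$ where the paper writes out their explicit antiderivatives; both yield the same $\alpha$-independent bound.
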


\begin{proof}[Proof of Proposition~\ref{prop:Malpha}] Let $z_0$ be a point of $\mathcal{A}_{\pm}(M,\alpha_0,r_0,\delta,\mathrm{I})$. According to Proposition \ref{prop:DLHesslambda}, the functions \( t \mapsto M_{\pm}^{\alpha}(t) \) exist and we will them on the time interval \( \mathrm{I} \).
\begin{enumerate}[leftmargin=*, labelindent=0pt] 
    \item We consider $\tau$ a positive real number such that $[t^{\flat}, t^{\flat} + \tau] \subset \mathrm{I}$. The functions $t \in \mathrm{I} \mapsto M_{\pm}^{\alpha}(t)$ are continuous on $[t_0, 	t^{\flat}]$ (resp. on $[ t^{\flat} , t^{\flat} + \tau]$) and therefore integrable. Now, we have to look at the $\alpha$-dependence of the integral. Here, we write the proof for the integral $\ds\int_{t_0}^{t^{\flat}} M_{\pm}^{\alpha}(s) \dint s$ but the other one is done in the same way. In view of the expression of $M_{\pm}^{\alpha}$ given in Proposition ~\ref{prop:DLHesslambda}, we have to look at the following six integrals
\begin{align*}
I_0 = \ds\int_{t_0}^{t^{\flat}} \dint s \quad & , \quad I_1 = \ds\int_{t_0}^{t^{\flat}} \dfrac{\alpha}{\sqrt{\alpha^2 + r^2(s-t^{\flat})^2}} \dint s , \\ I_2  = \ds\int_{t_0}^{t^{\flat}} \dfrac{(s - t^{\flat})}{\sqrt{\alpha^2 + r^2(s-t^{\flat})^2}} \dint s \quad & , \quad I_3  = \ds\int_{t_0}^{t^{\flat}} \dfrac{\alpha^2}{\Big(\alpha^2 + r^2(s-t^{\flat})^2\Big)^{\frac{3}{2}}} \dint s , \\
I_4  = \ds\int_{t_0}^{t^{\flat}} \dfrac{\alpha(s-t^{\flat})}{\Big(\alpha^2 + r^2(s-t^{\flat})^2\Big)^{\frac{3}{2}}} \dint s \quad & , \quad I_5 = \ds\int_{t_0}^{t^{\flat}} \dfrac{(s-t^{\flat})^3}{\Big(\alpha^2 + r^2(s-t^{\flat})^2\Big)^{\frac{3}{2}}} \dint s.
\end{align*}

\noindent For $I_0$, $I_1$, $I_2$ and $I_5$, it is sufficient to observe that the integrands are uniformly bounded with respect to $\alpha$ using the fact that $x \mapsto \sqrt{x}$ and $x \mapsto x^{\frac{3}{2}}$ are increasing and that for all $t \in [t_0,t^{\flat}]$ $$\alpha^2 + r^2(s-t^{\flat})^2 \geqslant r_0^2(s-t^{\flat})^2 \quad \text{and} \quad \alpha^2 + r^2(s-t^{\flat})^2 \geqslant \alpha^2.$$

\noindent It remains to examine the case of the integrals $I_3$ and $I_4$. If $\alpha = 0$, the result follows immediately. Otherwise, by using the change of variable $u = \frac{r(s-t^{\flat})}{\alpha}$ and the computations of Appendix \ref{appendixB}, we obtain
\begin{align*}
    & I_3 = \dfrac{1}{r} \ds\int_{\frac{r}{\alpha}(t_0 - t^{\flat})}^{0} \dfrac{\diff u}{(1 + u^2)^{\frac{3}{2}}} = - \dfrac{t_0 - t^{\flat}}{\sqrt{\alpha^2 + r^2 (t_0-t^{\flat})^2}}\\
    & I_4 = \dfrac{1}{r^2} \ds\int_{\frac{r}{\alpha}(t_0 - t^{\flat})}^{0} \dfrac{u}{(1 + u^2)^{\frac{3}{2}}} \, \diff u = \dfrac{1}{r^2} \Bigg( \dfrac{\alpha}{\sqrt{\alpha^2 + r^2(t_0-t^{\flat})^2}} - 1 \Bigg).
\end{align*}

\noindent As $\alpha \leqslant 1$, $\alpha^2 + r^2(t_0-t^{\flat})^2 \geqslant r_0^2(s-t^{\flat})^2$ and $\alpha^2 + r^2(t_0-t^{\flat})^2 \geqslant \alpha^2$, we deduce
\begin{equation*}
    \lvert I_3 \lvert \, \leqslant \dfrac{1}{r_0} \quad \text{and} \quad \lvert I_4 \lvert \, \leqslant \dfrac{2}{r_0^2}.
        \end{equation*} 

\noindent Since there exists a constant $C > 0$ such that $\Bigg\lvert \ds\int_{t_0}^{t^{\flat}} M_{\pm}^{\alpha}(s) \dint s \, \Bigg\lvert \leqslant C \sum\limits_{i=0}^{5} \lvert I_i \lvert$, we obtain the desired result from the first point according to all the previous computations.

    \item In the following, we will write $\mathrm{I} = [t_0,T]$ with $T \in \R$ and $a$ the function defined for all $t \in \mathrm{I}$ by $a(t) = r(t-t^{\flat})$. Let us consider $t \in \mathrm{I}$. Since $t \leqslant T$ and $\alpha \leqslant 1$ by assumption, there exists a constant $C > 0$ such that for all $(i,j) \in [\![1 , d ]\!]^2$
    \begin{equation*}
        \lvert (M^{\alpha}_{\pm}(t))_{i,j}\lvert \leqslant \dfrac{C}{\sqrt{\alpha^2 + r^2(t-t^{\flat})^2}} \Bigg( 3 + \dfrac{\alpha^2}{\alpha^2 + a^2(t)} + \dfrac{2 \alpha a(t)}{\alpha^2 + a^2(t)} \Bigg).
    \end{equation*}
    Using the fact that $\alpha^2 \leqslant \alpha^2 + a^2(t)$ and $2\alpha a(t) \leqslant \alpha^2 + a^2(t)$ and again that $t_0 \leqslant t \leqslant T$ and $\alpha \leqslant 1$, we deduce 
    \begin{equation*}
        \Bigg\lvert 1 + \Big\| M^{\alpha}_{\pm}(t) \Big\| \Bigg\lvert \leqslant 1 + \dfrac{C_1}{\sqrt{\alpha^2 + r^2(t-t^{\flat})^2}} \leqslant \dfrac{C_2}{\sqrt{\alpha^2 + r^2(t-t^{\flat})^2}}
    \end{equation*}
    with $C_1 > 0$ independant of $\alpha$ and $C_2 = \text{max}\Big(C_1, \sqrt{1 + r^2(t_0-t^{\flat})^2} \Big)$ or $C_2 = \text{max}\Big(C_1, \sqrt{1 + r^2(T-t^{\flat})^2} \Big)$ depending on whether $t \leqslant t^{\flat}$ or if $t > t^{\flat}$.
\end{enumerate}
\end{proof}

\noindent We now turn our attention to the second term in \( \text{Hess} \, \lambda_{\pm}(q_{\pm}) \) involving the matrix \( \Gamma_{\alpha} \). To this end, we define \( g_{\alpha} \) as the function given for all \( t \in \mathrm{I} \) by
\begin{equation}
\label{eq:g_alpha}
g_{\alpha}(t) = \frac{r}{\sqrt{\alpha^2 + r^2(t - t^{\flat})^2}} \, \Gamma_\alpha(t),
\end{equation}
\noindent which appears in the expression of \( \text{Hess} \, \lambda_{\pm}(q_{\pm}) \). The following lemma provides a antiderivative of $g_{\alpha}$ and the goal of the following properties is to establish satisfactory controls on this antiderivative.

\begin{lemma}[Antiderivative of $g_\alpha$]
\label{lem:galpha}
    Under the same assumptions as in Proposition \ref{prop:DLHesslambda}, for all $t \in \mathrm{I}$, the matrix-valued function $t \in \mathrm{I} \mapsto \text{sgn}(t-t^{\flat}) G_{\alpha}(t)$, with $G_{\alpha}$ defined in \eqref{def:phase}, is a antiderivative of $g_{\alpha}$. 
\end{lemma}

\begin{proof}[Proof of Lemma ~\ref{lem:galpha}] We recall that for all $t \in \mathrm{I}$ $$\Gamma_{\alpha}(t) = \Gamma_0 + \dfrac{\alpha^2}{\alpha^2 + r^2 (t-t^{\flat})^2} \Gamma_1.$$
\noindent Therefore, in order to find a antiderivative of \(g_{\alpha}\), we must compute a antiderivative of the functions 
\[ t \mapsto \frac{r}{\sqrt{\alpha^2 + r^2(t - t^{\flat})^2}} \, \Gamma_0 
\quad \text{and} \quad t \mapsto \frac{r \alpha^2}{\left(\alpha^2 + r^2(t - t^{\flat})^2\right)^{3/2}} \, \Gamma_1.\]

\noindent Using the antiderivatives given in Appendix~\ref{appendixB} with well-chosen constants, and observing that for all \(t \in \mathrm{I}\),
\[G_{\alpha}(t) = \Gamma_0 h_{\alpha}(t) + \frac{r \lvert t - t^{\flat} \rvert}{\sqrt{\alpha^2 + r^2(t - t^{\flat})^2}} \, \Gamma_1,\]
where \(\Gamma_1\) is defined in \eqref{matGamma1}, we obtain the desired result.
\end{proof}

\noindent Before analyzing estimates on the function \( G_\alpha \), let us first examine the function $h_{\alpha}$.

\begin{lemma}[Study of $h_\alpha$] \label{lem:halpha} Let $\tau$ be a positive real number such that $[t^{\flat},t^{\flat} + \tau]$. Under the same assumptions as in Proposition \ref{prop:DLHesslambda}, $h_{\alpha}$ is integrable on $[t_0,t^{\flat}]$ (resp. on $[ t^{\flat} , t^{\flat} + \tau]$). Moreover, there exists $C > 0$ (independent of $\alpha$) such that $$\Bigg\lvert \ds\int_{t_0}^{t^{\flat}} h_{\alpha}(s) \dint s \, \Bigg\lvert \leqslant C, \, \, \, \, \Big( \text{resp.} \, \, \Bigg\lvert \ds\int_{t^{\flat}}^{t^{\flat} + \tau} h_{\alpha}(s) \dint s \Bigg\lvert \leqslant C \Big).$$
\end{lemma}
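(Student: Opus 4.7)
The plan is to sandwich $h_\alpha$ between an $\alpha$-independent constant above and an integrable $\alpha$-independent function below, so that both integrability and the uniform $L^1$ bound follow at once. First I would perform the change of variables $u = |t - t^{\flat}|$ on each of the two subintervals, reducing the problem to controlling
\begin{equation*}
\int_0^L \ln\!\Big( ru + \sqrt{\alpha^2 + r^2 u^2}\,\Big) \, \dint u
\end{equation*}
with $L = t^{\flat} - t_0$ on one side and $L = \tau$ on the other.

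The upper bound comes from the elementary inequality $\sqrt{\alpha^2 + r^2 u^2} \leqslant \alpha + r u$, which yields $ru + \sqrt{\alpha^2 + r^2 u^2} \leqslant 2 ru + \alpha \leqslant 2 r L + 1$ for $\alpha \in [0,1]$ and $u \in [0,L]$. Hence $h_\alpha(t) \leqslant \ln(2rL+1)$ uniformly in $\alpha$. For the lower bound, I simply drop the square root: $ru + \sqrt{\alpha^2 + r^2 u^2} \geqslant ru$, so $h_\alpha(t) \geqslant \ln(r|t-t^{\flat}|)$. Combining,
\begin{equation*}
|h_\alpha(t)| \leqslant |\ln(2rL+1)| + |\ln(r|t-t^{\flat}|)|,
\end{equation*}
and the right-hand side is integrable on the relevant interval since the logarithmic singularity at $t = t^{\flat}$ is integrable ($\int_0^L |\ln(ru)|\dint u$ is finite). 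Both bounds are independent of $\alpha$, so integrating produces the required constant $C$.

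The only step that requires a little care is the case $\alpha = 0$, where $h_0(t) = \ln(2r|t-t^{\flat}|)$ has a genuine logarithmic singularity at $t^{\flat}$; but this is a classical integrable singularity and is already captured by the lower bound $\ln(r|t-t^{\flat}|)$, so no separate argument is needed. For $\alpha > 0$ the function $h_\alpha$ is continuous on the closed interval with $h_\alpha(t^{\flat}) = \ln \alpha$, hence trivially integrable, and the uniform constant $C$ comes out of the pointwise sandwich above. I do not expect a real obstacle here — the lemma is essentially the statement that a logarithmic singularity is integrable and that monotone bounds in $\alpha$ are easily available.
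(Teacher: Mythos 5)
Your proof is correct, but it takes a genuinely different and more elementary route than the paper. The paper proves the lemma by an explicit computation: after the rescaling $u=\tfrac{r}{\alpha}(t^{\flat}-s)$ it splits off $\ln(\alpha)$, invokes the closed-form primitive $\int \ln(u+\sqrt{1+u^2})\,\dint u = x\ln(x+\sqrt{1+x^2})-\sqrt{1+x^2}+C$ from the appendix, and then bounds the resulting exact value of the integral by a case distinction on whether $r(t^{\flat}-t_0)+\sqrt{\alpha^2+r^2(t^{\flat}-t_0)^2}$ is below or above $1$ (with a separate treatment of $\alpha=0$). You instead sandwich the integrand pointwise: $\ln(r\lvert t-t^{\flat}\rvert)\leqslant h_{\alpha}(t)\leqslant \ln(2rL+1)$ via $\sqrt{\alpha^2+r^2u^2}\leqslant \alpha+ru$, both bounds being $\alpha$-independent and integrable, which gives $\int \lvert h_{\alpha}\rvert \leqslant C$ directly and makes $\alpha=0$ a non-issue. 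Your argument is shorter, avoids the primitive and the case analysis, and yields the slightly stronger uniform $L^1$ bound; it is in fact close in spirit to the pointwise control the paper only establishes afterwards in Remark~\ref{rq:halpha}. What the paper's computation buys in exchange is the exact value of $I^{\alpha}$, which is not needed elsewhere, so nothing is lost by your approach.
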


\begin{proof}[Proof of Lemma ~\ref{lem:halpha}] We prove the result for the integral $I^{\alpha} := \ds\int_{t_0}^{t^{\flat}} h_{\alpha}(s) \dint s$ but the other one is done in the same way. If \textbf{$\alpha = 0$}, using the change of variable $u = 2r(t^{\flat}-s)$, we have
    \begin{equation*}
        I^{0} = \dfrac{1}{2r}\ds\int_{0}^{2r(t^{\flat}-t_0)} \ln(u) \dint u.
    \end{equation*}
Since $u \mapsto \ln(u)$ is integrable at $0$, we deduce that there exists $C > 0$ such that $I^{0} \leqslant C$.\\
\noindent Now, if \textbf{$\alpha \neq 0$}, since $t_0 \leqslant s \leqslant t^{\flat}$, we can write $$I^{\alpha} = \ds\int_{t_0}^{t^{\flat}} \ln \Bigg( r(t^{\flat}-s) + \alpha \sqrt{1 + \frac{r^2(s-t^{\flat})^2}{\alpha^2}} \Bigg) \dint s.$$ Using the change of variable $u = \dfrac{r}{\alpha}(t^{\flat} - s)$ and Appendix \ref{appendixB}, we obtain 
    \begin{align*}
        I^{\alpha} & = \dfrac{\alpha}{r} \ds\int_{0}^{\frac{r}{\alpha}(t^{\flat} - t_0)} \ln(\alpha u + \alpha \sqrt{1 + u^2}) \dint u\\
        & = \dfrac{\alpha}{r} \ds\int_{0}^{\frac{r}{\alpha}(t^{\flat} - t_0)} \ln (\alpha) \dint u + \dfrac{\alpha}{r} \ds\int_{0}^{\frac{r}{\alpha}(t^{\flat} - t_0)} \ln (u + \sqrt{1 + u^2}) \dint u\\
        & = (t^{\flat} - t_0) \ln \Bigg(r(t^{\flat} - t_0) + \sqrt{\alpha^2 + r^2(t^{\flat} - t_0)^2} \Bigg) - \dfrac{1}{r} \sqrt{\alpha^2 + r^2(t^{\flat} - t_0)^2} + \dfrac{\alpha}{r}.
        \end{align*}
\noindent As $\alpha \leqslant 1$, we obtain $$\lvert I^{\alpha} \lvert \, \leqslant (t^{\flat} - t_0) \Bigg\lvert \ln \Bigg(r(t^{\flat} - t_0) + \sqrt{\alpha^2 + r^2(t^{\flat} - t_0)^2} \Bigg) \Bigg\lvert + \dfrac{1}{r} \Bigg( \sqrt{1 + r^2(t^{\flat} - t_0)^2} + 1 \Bigg).$$ We recall that the goal of our study is to obtain control bounds that are independent of $\alpha$, this is why some additional work is still required. If $2r_0(t^{\flat} - t_0) \leqslant r(t^{\flat} - t_0) + \sqrt{\alpha^2 + r^2(t^{\flat} - t_0)^2} \leqslant 1$, then $$\ln(2r_0(t^{\flat} - t_0)) \leqslant \ln \Bigg( r(t^{\flat} - t_0) + \sqrt{\alpha^2 + r^2(t^{\flat} - t_0)^2} \Bigg) \leqslant 0.$$ Otherwise, $1 \leqslant r(t^{\flat} - t_0) + \sqrt{\alpha^2 + r^2(t^{\flat} - t_0)^2} \leqslant r(t^{\flat} - t_0) + \sqrt{1 + r^2(t^{\flat} - t_0)^2}$, then $$\ln \Bigg( r(t^{\flat} - t_0) + \sqrt{\alpha^2 + r^2(t^{\flat} - t_0)^2} \Bigg) \leqslant \ln \Bigg( r(t^{\flat} - t_0) + \sqrt{1 + r^2(t^{\flat} - t_0)^2} \Bigg).$$ In each case, the lemma is satisfied.
\end{proof}

\begin{remark}
\label{rq:halpha}
    For all $t \in \R$, $r\lvert t- t^{\flat}\lvert \, \leqslant \sqrt{\alpha^2 + r^2(t-t^{\flat})^2}$ and therefore $$\ln(2r\lvert t- t^{\flat}\lvert) \leqslant \ln \Bigg(r\lvert t- t^{\flat}\lvert + \sqrt{\alpha^2 + r^2(t-t^{\flat})^2} \Bigg).$$ But, for $t$ close enough to $t^{\flat}$, we have $\ln \Bigg(r\lvert t- t^{\flat}\lvert + \sqrt{\alpha^2 + r^2(t-t^{\flat})^2} \Bigg) \leqslant 0$ and then
    \begin{equation*}
        \lvert h_{\alpha}(t) \lvert \leqslant \Big\lvert \ln(2r) + \ln\lvert t - t^{\flat}\lvert \Big\lvert \leqslant C\Big( 1 + \Big\lvert \ln\lvert t - t^{\flat}\lvert \Big\lvert \Big)
    \end{equation*}
    with $C = \text{max}(1, \lvert\ln(2r)\lvert)$. By adjusting the constant, this control holds on the interval $[t_0,t^{\flat})$.
\end{remark}

\noindent Using the definition of $G_{\alpha}$ given by \eqref{def:phase}, together with the fact that the function $t \mapsto \dfrac{r\lvert t - t^{\flat}\lvert}{\sqrt{\alpha^2 + r^2(t-t^{\flat})^2}}$ is uniformly bounded with respect to $\alpha$, we obtain that there exists $C >0$ such that $$\| G_{\alpha}(t) \| \leqslant C \Big( 1 + \Big\lvert h_{\alpha}(t) \Big\lvert \Big).$$ This leads to the following lemma.

\begin{prop}[Study of $G_\alpha$] \label{lem:Galpha} Under the same assumptions as in Proposition \ref{prop:DLHesslambda}, it follows that
    \begin{enumerate}
        \item For all $t \in \mathrm{I} \setminus \lbrace t^{\flat} \rbrace$, there exists $C >0$, such that $$\| G_{\alpha}(t) \|  \leqslant C \Big( 1 + \Big\lvert \ln(\lvert t - t^{\flat}\lvert)\Big\lvert \Big).$$ 
        \item The matrix-valued function $G_{\alpha}$ is integrable on $[t_0,t^{\flat}]$ (resp. on $[ t^{\flat} , t^{\flat} + \tau]$). Moreover, there exists $C > 0$ (independent of $\alpha$) such that $$\Bigg\lvert \ds\int_{t_0}^{t^{\flat}} G_{\alpha}(s) \dint s \, \Bigg\lvert \leqslant C, \, \, \, \, \Big( \text{resp.} \, \, \Bigg\lvert \ds\int_{t^{\flat}}^{t^{\flat} + \tau} G_{\alpha}(s) \dint s \Bigg\lvert \leqslant C \Big).$$
    \end{enumerate}
\end{prop}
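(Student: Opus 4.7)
The plan is to reduce both claims to the already-established behavior of the scalar function $h_\alpha$ via the explicit expression \eqref{def:phase}. Inspecting the coefficients of $G_{\alpha}(t)$, each one is a linear combination (with weights depending only on $w$ at $q^{\flat}$, the angle $\theta$ and $r$, hence bounded independently of $\alpha$) of the two quantities
\begin{equation*}
h_\alpha(t) \quad \text{and} \quad h_\alpha(t) - \frac{r|t - t^{\flat}|}{\sqrt{\alpha^2 + r^2(t-t^{\flat})^2}}.
\end{equation*}
The fractional term satisfies $0 \leq \tfrac{r|t-t^{\flat}|}{\sqrt{\alpha^2+r^2(t-t^{\flat})^2}} \leq 1$ uniformly in $(\alpha,t)$, which yields the pointwise control $\|G_\alpha(t)\| \leq C(1 + |h_\alpha(t)|)$ already recorded just before the statement, with $C$ independent of $\alpha$.

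For point~(1), I would then apply Remark~\ref{rq:halpha}, which provides $|h_\alpha(t)| \leq C(1 + |\ln|t-t^{\flat}||)$ on a neighborhood of $t^{\flat}$. Outside this neighborhood, $h_\alpha$ is continuous on the compact set $[t_0, t^{\flat} - \rho] \cup [t^{\flat} + \rho, t^{\flat}+\tau]$ and one checks directly from its definition \eqref{eq:h_alpha} that $\sup_{\alpha \in [0,1]}|h_\alpha(t)|$ is finite and bounded by a constant times $(1 + |\ln|t-t^{\flat}||)$ there as well. Combining the two regions yields the desired logarithmic bound.

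For point~(2), integrability of $G_\alpha$ is immediate once I integrate the pointwise estimate, since $|h_\alpha|$ is integrable on $[t_0,t^{\flat}]$ and on $[t^{\flat},t^{\flat}+\tau]$ by Lemma~\ref{lem:halpha}. For the $\alpha$-uniformity of the integral bound, the same pointwise estimate gives
\begin{equation*}
\Big| \int_{t_0}^{t^{\flat}} G_\alpha(s)\,\dint s\Big| \leq C\Big((t^{\flat}-t_0) + \int_{t_0}^{t^{\flat}} |h_\alpha(s)|\,\dint s\Big),
\end{equation*}
and Lemma~\ref{lem:halpha} controls the latter integral by a constant independent of $\alpha$; the argument on $[t^{\flat}, t^{\flat}+\tau]$ is identical. (To be safe one may split $h_\alpha$ into positive and negative parts before taking absolute values; since the explicit computation in the proof of Lemma~\ref{lem:halpha} actually produces a uniform-in-$\alpha$ bound on $\int h_\alpha$, and since $h_\alpha$ has a single logarithmic singularity at $t^{\flat}$, this causes no difficulty.)

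Essentially all the technical work has been carried out upstream in Lemmas~\ref{lem:halpha}--\ref{lem:galpha} and Proposition~\ref{prop:Malpha}, so there is no genuine obstacle. The only mildly delicate point is to make sure that the $\alpha$-dependent coefficient $\tfrac{r|t-t^{\flat}|}{\sqrt{\alpha^2+r^2(t-t^{\flat})^2}}$ multiplying $\Gamma_1$ in \eqref{def:phase} never contributes a singularity worse than that of $h_\alpha$ itself, which is immediate from its uniform bound by $1$; this is what guarantees that no extra $\alpha^{-1}$ or $\ln\alpha$ factor enters the estimates.
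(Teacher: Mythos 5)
Your proposal is correct and follows essentially the same route as the paper: the paper gives no separate proof of this proposition, deriving it from the pointwise bound $\| G_{\alpha}(t) \| \leqslant C ( 1 + \lvert h_{\alpha}(t) \rvert )$ stated just before it (which rests on the uniform bound of $t \mapsto \tfrac{r\lvert t - t^{\flat}\rvert}{\sqrt{\alpha^2 + r^2(t-t^{\flat})^2}}$ by $1$), together with Remark~\ref{rq:halpha} for point~(1) and Lemma~\ref{lem:halpha} for point~(2). Your extra care in passing from the bound on $\big\lvert \int h_{\alpha} \big\rvert$ of Lemma~\ref{lem:halpha} to a uniform bound on $\int \lvert h_{\alpha} \rvert$ (via the two-sided logarithmic control of $h_{\alpha}$) is a legitimate and harmless refinement of the same argument.
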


\subsection{Profiles of the approximate solution}
\label{subsec:consProfil}

\noindent The following propositions allow to construct the profiles of wave packets that enter and leave the gap region and give a control on some norms in the space $\Sigma^k$. The results of this subsection are similar to those obtained in \cite{FGH2021} but a crucial step of the proof is to obtain that the bounds exhibited by the controls are independent of the parameter ~$\alpha$.

\begin{prop}[Control of the $\Sigma^k$-norm]
\label{prop:normSigu} For all $z_0 \in \mathcal{A}_{\pm}(M,\alpha_0,r_0, \delta, \mathrm{I})$, there exist constants $C_k > 0$ and $\widetilde{C}_k > 0$, independent of $\alpha$, such that for all $t \in \mathrm{I}$, we have $$\norm{u_{\pm}(t)}{\Sigma^k} \leqslant C_k \Bigg( 1 + \Bigg\lvert \ln\Big(  \lvert t - t^{\flat} \lvert \Big) \Bigg\lvert \Bigg) \quad \text{and} \quad \norm{\widetilde{u}_{\pm}(t)}{\Sigma^k} \leqslant C_k \Bigg( 1 + \Bigg\lvert \ln\Big(  \lvert t - t^{\flat} \lvert \Big) \Bigg\lvert \Bigg).$$
\end{prop}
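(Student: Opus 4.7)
The plan is to adapt the argument of \cite[Proposition 2.5]{FGH2021} to the present setting, with the essential difficulty that all constants must remain uniform in $\alpha \in [0,1]$. I represent $u_\pm$ via metaplectic calculus and reduce the $\Sigma^k$-estimate to a logarithmic bound on the associated symplectic flow matrix; the logarithmic growth itself is obtained by a gauge transformation that absorbs the singular part of $\mathrm{Hess}\,\lambda_\pm(q_\pm(t))$ using the primitive $G_\alpha$ provided by Lemma \ref{lem:galpha}.

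\textbf{Step 1 (metaplectic reduction).} By \cite{MasperoRobert17}, since $Q_\pm(t)$ is a time-dependent self-adjoint quadratic operator, there exists a unitary evolution $\mathcal{U}_\pm(t,t_0)$ on $L^2(\mathbb{R}^d)$ such that $u_\pm(t) = \mathcal{U}_\pm(t,t_0) u_\pm(t_0)$, and $\mathcal{U}_\pm$ is (up to a scalar phase) the metaplectic operator $\mathcal{M}(F_\pm(t,t_0))$ associated with the linear symplectic flow $F_\pm(t,t_0) \in \mathrm{Sp}(d,\mathbb{R})$ solving
\[ \dot F_\pm(t) = \mathcal{A}_\pm(t)\, F_\pm(t), \qquad \mathcal{A}_\pm(t) = \begin{pmatrix} 0 & I_d \\ -\mathrm{Hess}\,\lambda_\pm(q_\pm(t)) & 0 \end{pmatrix}, \qquad F_\pm(t_0) = I_{2d}. \]
Standard metaplectic calculus gives $\|\mathcal{U}_\pm(t,t_0) \varphi\|_{\Sigma^k} \leq c_k (1+\|F_\pm(t,t_0)\|)^k \|\varphi\|_{\Sigma^k}$, which reduces the proposition to a logarithmic bound on $\|F_\pm(t,t_0)\|$. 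Note that a naive Grönwall on $F_\pm$ gives only an exponential of $\int_{t_0}^t \|\mathcal{A}_\pm(s)\|\,ds$, and Proposition \ref{prop:Malpha} shows this integral scales at best like $|\ln|t-t^\flat||$, so the crude bound is only polynomial and insufficient.

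\textbf{Steps 2--3 (gauge change and uniform estimate).} Using the splitting $\mathrm{Hess}\,\lambda_\pm(q_\pm(t)) = M_\pm^\alpha(t) \pm g_\alpha(t)$ from Proposition \ref{prop:DLHesslambda} together with $\frac{d}{dt}[\mathrm{sgn}(t-t^\flat) G_\alpha(t)] = g_\alpha(t)$ (Lemma \ref{lem:galpha}), I introduce the symplectic factorisation $F_\pm(t) = N_\pm(t)\, \tilde F_\pm(t)$ with
\[ N_\pm(t) = \begin{pmatrix} I_d & 0 \\ \mp\,\mathrm{sgn}(t-t^\flat)\, G_\alpha(t) & I_d \end{pmatrix}. \]
A direct computation shows that $\tilde F_\pm$ satisfies a linear ODE $\dot{\tilde F}_\pm = \tilde{\mathcal{A}}_\pm(t)\, \tilde F_\pm$ in which the singular contribution $\pm g_\alpha$ is cancelled exactly in the bottom-left block (this cancellation is the rationale for the choice of $N_\pm$), leaving an effective driving matrix whose entries are polynomial in $M_\pm^\alpha$ and $G_\alpha$. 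By Proposition \ref{prop:Malpha}, $\int_{t_0}^T \|M_\pm^\alpha(s)\|\,ds$ is uniformly bounded in $\alpha$; by Proposition \ref{lem:Galpha} and the elementary fact that $(1+|\ln|s|)^j$ is locally integrable, the integrals $\int_{t_0}^T \|G_\alpha(s)\|^j\,ds$ are also uniformly bounded in $\alpha$ for every $j \in \mathbb{N}$. Grönwall then yields $\|\tilde F_\pm(t)\| \leq C$ uniformly in $t \in [t_0,T]$ and $\alpha \in [0,1]$.

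\textbf{Conclusion and main obstacle.} Since $\|N_\pm(t)\| \leq 1 + \|G_\alpha(t)\| \leq C(1+|\ln|t-t^\flat||)$ by Proposition \ref{lem:Galpha}, one concludes $\|F_\pm(t)\| \leq C(1 + |\ln|t-t^\flat||)$, and Step 1 gives the stated $\Sigma^k$-estimate (up to absorbing powers into the constant $C_k$). The estimate for $\tilde u_\pm$ is obtained verbatim, the remark following Proposition \ref{prop:DLHesslambda} providing the same expansion of the Hessian along the drifted trajectories $\tilde q_\pm$. The main technical obstacle is the algebraic verification in Step 2 that the gauge change $N_\pm$ eliminates the singular driving term: both $N_\pm^{-1} \dot N_\pm$ and $N_\pm^{-1} \mathcal{A}_\pm N_\pm$ produce pieces involving $g_\alpha$ which must cancel, leaving only a quadratic remainder $G_\alpha^2$. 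One then has to confirm that this quadratic remainder is integrable uniformly in $\alpha$, which reduces to the elementary observation that $(1+|\ln r|)^2$ is locally integrable near $r=0$; this is the key point where the specific logarithmic nature of $G_\alpha$ is used.
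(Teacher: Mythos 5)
Your proposal is built on the same key mechanism as the paper's proof: split $\mathrm{Hess}\,\lambda_\pm(q_\pm)=M_\pm^\alpha \pm g_\alpha$, absorb the non-integrable part $g_\alpha$ by a unipotent gauge transformation built from its primitive $\mathrm{sgn}(t-t^\flat)G_\alpha$, check that the remaining driving terms ($M_\pm^\alpha$, $G_\alpha$, $G_\alpha^2$) are integrable uniformly in $\alpha$, apply Gr\"onwall, and pay the logarithm only through the final conjugation by the gauge factor. The difference is where you implement it: the paper works at the level of the quantum observables $U=(yu,D_yu)$, with the projector $\mathbb{P}$ and the substitution $\tilde V=W-G_\alpha\mathbb{P}\mathrm{L}V$ followed by an $L^2$ energy estimate; you work at the level of the classical symplectic flow $F_\pm$ and push the estimate to $\Sigma^k$ through metaplectic calculus. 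Your block computation ($N^{-1}\mathcal{A}N-N^{-1}\dot N$ cancelling $g_\alpha$ and leaving $-G_\alpha^2-M_\pm^\alpha$ in the lower-left block) is correct and is the exact classical counterpart of the paper's cancellation; the uniform-in-$\alpha$ integrability inputs are the same Propositions \ref{prop:Malpha} and \ref{lem:Galpha}.

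There is, however, one genuine gap. The metaplectic bound $\|\mathcal{U}_\pm(t,t_0)\varphi\|_{\Sigma^k}\leqslant c_k(1+\|F_\pm(t,t_0)\|)^k\|\varphi\|_{\Sigma^k}$ combined with $\|F_\pm(t)\|\leqslant C(1+|\ln|t-t^\flat||)$ yields $\|u_\pm(t)\|_{\Sigma^k}\leqslant C_k(1+|\ln|t-t^\flat||)^k$, i.e.\ the $k$-th power of the logarithm, whereas the proposition asserts the first power. Your parenthetical ``up to absorbing powers into the constant $C_k$'' is not legitimate: $|\ln|t-t^\flat||$ diverges as $t\to t^\flat$, so $(1+|\ln|t-t^\flat||)^k$ cannot be dominated by $C_k(1+|\ln|t-t^\flat||)$. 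To recover the stated power (at least at the first order, which is what the paper establishes in detail) you must exploit the triangular structure $F_\pm=N_\pm\tilde F_\pm$ componentwise rather than through the operator norm of $F_\pm$: since $N_\pm$ is unipotent lower block-triangular with bounded $\tilde F_\pm$, the position-type observables $y_ju$ remain uniformly bounded and only the momentum-type observables $D_ju$ pick up one factor of $G_\alpha$ — this is precisely what the paper's decomposition $U=V+\tilde V+G_\alpha\mathbb{P}\mathrm{L}V$ records, with $\|V(t)\|+\|\tilde V(t)\|\leqslant \tilde C$ and the single factor $\|G_\alpha(t)\|$ entering linearly. As written, your argument proves a weaker estimate than the one claimed, so you should either refine the last step along these lines or state the bound you actually obtain.
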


\begin{proof}[Proof of Proposition ~\ref{prop:normSigu}]
    In this proof, we denote the interval $\mathrm{I}$ by $\mathrm{I} = [t_0, T]$.  Let $z_0$ be a point of $\mathcal{A}_{\pm}(M,\alpha_0,r_0,\delta,\mathrm{I})$. We consider $t \in [t_0, t^{\flat}]$ (the proof is the same if $t \in [t^{\flat},T]$). Our goal is to control the norms $\norm{u_{\pm}(t)}{\Sigma^k}$ and $\norm{\widetilde{u}_{\pm}(t)}{\Sigma^k}$ for $k \geq 1$. We follow the strategy outlined in \cite{FGH2021}. For convenience, we fix a mode, the proof is identical for both modes, and we focus only on $u_{\pm}$ (the argument for $\widetilde{u}_{\pm}$ is analogous). In the following, $Q_{\pm}$, $\lambda_{\pm}(q_{\pm})$, $M^{\alpha}_{\pm}$ and $u_{\pm}$ will be denoted $Q$, $\lambda(q)$, $M^{\alpha}$ and ~$u$. We introduce the vector $U$ of $\C^{2d}$ defined by $U = \begin{pmatrix}
        yu \\
        D_y u
    \end{pmatrix}.$ Using the commutators
    \begin{equation*}
        [Q(t), D_y] = i \text{Hess} \, \lambda(q(t)) y \, \, \, \text{and} \, \, \, [Q(t), y] = -i D_y
    \end{equation*}
    we obtain
    \begin{equation*}
        \Bigg[ Q(t), \begin{pmatrix}
        yu \\
        D_y u
    \end{pmatrix}\Bigg] u = \begin{pmatrix}
        -iD_y u\\
        i \text{Hess} \, \lambda(q(t)) y u
    \end{pmatrix} = i \begin{pmatrix}
        0 & - I_d \\
        \text{Hess} \, \lambda(q(t)) y & 0
    \end{pmatrix} U.
    \end{equation*}
    Thus, using Proposition \ref{prop:DLHesslambda}, we deduce that $U$ satisfies the following equation 
    $$i\partial_t U - Q(t) U = \underset{\mathrm{N}^{\alpha}(t)}{\underbrace{\begin{pmatrix}
        0 & -i I_d\\
        iM^{\alpha}(t)& 0
    \end{pmatrix}}} U  + i g_{\alpha}(t) \underset{\mathrm{L}}{\underbrace{\begin{pmatrix}
        0 & 0\\
        I_d & 0
    \end{pmatrix}}} U.$$
    According to Propositions \ref{prop:DLHesslambda} and \ref{prop:Malpha}, $t \mapsto \mathrm{N}^{\alpha}(t)$ is smooth on $[t_0, t^{\flat}]$ and there exists $C > 0$ such that \begin{equation}
    \label{eq:propNalpha}
        \Bigg\lvert \ds\int_{t_0}^{t^{\flat}} \mathrm{N}^{\alpha}(s) \dint s \Bigg\lvert \leqslant C \quad \text{and} \quad \|\mathrm{N}^{\alpha}(t)\| \leqslant C \, \, \text{for} \, \, t \in [t_0,t^{\flat}].
    \end{equation}

    \noindent Our aim is to prove that for all $k \in \N$, there exists $C_k > 0$, independent of $\alpha$ such that  for all $t \in [t_0, t^{\flat}]$, $$\lvert \lvert U(t) \lvert \lvert_{\Sigma^k(\R^d, \C^{2d})} \leqslant C_k \Bigg( 1 + \Big\lvert \ln\Big(  \lvert t - t^{\flat} \lvert \Big) \Bigg\lvert \Bigg).$$ We prove this claim by induction. We need to introduce the following projector of rank $d$ $$\mathbb{P} = \begin{pmatrix}
        0 & 0 \\
        0 & I_d
    \end{pmatrix}.$$ This projector satisfies the following relations $$\mathbb{P}(I_{2d} - \mathbb{P}) = 0 \, \, \,, \, \, \,(I_{2d} - \mathbb{P}) \mathrm{L} = 0 \, \, \,\text{and} \, \, \,  \mathbb{P} \mathrm{L} = \mathbb{P} \mathrm{L} (I_{2d} - \mathbb{P}).$$

    \noindent We detail only the first step and we explain why the upper bound is $\alpha$-independent. The following steps use the same arguments and the induction assumption.\\
    
    \noindent \textbf{Step one: $k=0$.} We introduce $V = (I_{2d} - \mathbb{P})U$ and $W = \mathbb{P}U$. Using $V + W = U$, the equation satisfied by $U$ and the relations on the matrix $\mathbb{P}$, we have $$i\partial_t V - Q(t) V = (I_{2d} - \mathbb{P}) \mathrm{N}^{\alpha}(t) U$$ and $$i\partial_t W - Q(t) W = i g_{\alpha}(t) \mathbb{P} \mathrm{L} V + \mathbb{P} \mathrm{N}^{\alpha}(t) U.$$
    Then, we introduce the new variable $$\widetilde{V} = W - G_{\alpha}(t) \mathbb{P} \mathrm{L} V.$$ We notice that $U = V + \widetilde{V} + G_{\alpha}(t) \mathbb{P} \mathrm{L} V$ and $\widetilde{V}$ satisfies
    \begin{align*}
        i\partial_t \widetilde{V} - Q(t) \widetilde{V} & = i\partial_t W - (i \partial_t G_{\alpha}(t)) \mathbb{P} \mathrm{L} V - G_{\alpha}(t) \mathbb{P} \mathrm{L} (i \partial_t V) - Q(t) W + G_{\alpha}(t) \mathbb{P} \mathrm{L} Q(t) V \\
        & = \Big( i\partial_t W - Q(t) W \Big) - G_{\alpha}(t) \mathbb{P} \mathrm{L} \Big(i \partial_t V - Q(t) V \Big) - i g_{\alpha}(t) \mathbb{P} \mathrm{L} V\\
        & = i g_{\alpha}(t) \mathbb{P} \mathrm{L} V + \mathbb{P} \mathrm{N}^{\alpha}(t) U - G_{\alpha}(t) \mathbb{P} \mathrm{L} (I_{2d} - \mathbb{P}) \mathrm{N}^{\alpha}(t) U - i g_{\alpha}(t) \mathbb{P} \mathrm{L} V\\
        & = \mathbb{P} \mathrm{N}^{\alpha}(t) U - G_{\alpha}(t) \mathbb{P} \mathrm{L} \mathrm{N}^{\alpha}(t) U\\
        & = \Bigg(\mathbb{P} - G_{\alpha}(t) \mathbb{P} \mathrm{L} \Bigg) \mathrm{N}^{\alpha}(t) \Bigg( V + \widetilde{V} + G_{\alpha}(t) \mathbb{P} \mathrm{L} V\Bigg)
    \end{align*}
    
    \noindent The equations on $V$ and $\widetilde{V}$ can be rewritten as the following system
    \begin{equation*}
    \left\lbrace 
    \begin{array}{cll}
        i\partial_t V - Q(t) V & = & A^{\alpha}(t) V + B^{\alpha}(t) \widetilde{V} \\
        i\partial_t \widetilde{V} - Q(t) \widetilde{V} & = & \widetilde{A}^{\alpha}(t) V + \widetilde{B}^{\alpha}(t) \widetilde{V},
    \end{array}
    \right.
    \end{equation*}
    where $t \mapsto A^{\alpha}(t), B^{\alpha}(t), \widetilde{A}^{\alpha}(t), \widetilde{B}^{\alpha}(t)$ are functions dependent on constant functions and the functions $t \mapsto \mathrm{N}^{\alpha}(t)$, $t \mapsto G_{\alpha}(t)$. Thanks to an energy estimate, we deduce that
    \begin{multline*}
    \normLp{V(t)}{2}{\R^d} + \normLp{\widetilde{V}(t)}{2}{\R^d} \leqslant C_0 \, + \ds\int_{t_0}^{t} \Big(\normLp{B^{\alpha}(s)}{2}{\R^d} + \normLp{\widetilde{A}^{\alpha}(s)}{2}{\R^d} \Big) \Big( \normLp{V(s)}{2}{\R^d} + \normLp{\widetilde{V}(s)}{2}{\R^d} \Big) \diff s.
    \end{multline*}
    Using Grönwall lemma, we obtain that $$\normLp{V(t)}{2}{\R^d} + \normLp{\widetilde{V}(t)}{2}{\R^d} \leqslant C_0 \text{Exp}\Bigg( \ds\int_{t_0}^{t} \normLp{B^{\alpha}(s)}{2}{\R^d} + \normLp{\widetilde{A}^{\alpha}(s)}{2}{\R^d} \diff s \Bigg).$$ Since the functions \( t \mapsto G_{\alpha}(t) \) and \( t \mapsto G_{\alpha}^2(t) \) are integrable by Proposition~\ref{lem:Galpha} and using \eqref{eq:propNalpha}, we deduce that the functions \( B^{\alpha} \) and \( \widetilde{A}^{\alpha} \) are integrable on the interval \( [t_0, t^{\flat}] \). Consequently, there exists a constant \( C > 0 \) such that for all \( t \in [t_0, t^{\flat}] \),
    \begin{equation*}
    \ds\int_{t_0}^{t} \normLp{B^{\alpha}(s)}{2}{\R^d} + \normLp{\widetilde{A}^{\alpha}(s)}{2}{\R^d} \diff s \leqslant C.
    \end{equation*}
    It allows us to conclude that there exists $\widetilde{C} > 0$ such that for all $t \in [t_0, t^{\flat}]$, $$\normLp{V(t)}{2}{\R^d} + \normLp{\widetilde{V}(t)}{2}{\R^d} \leqslant \widetilde{C}.$$
    Thanks to the relation between $U$, $V$ and $\widetilde{V}$, we obtain the existence of a constant $C_1 > 0$ such that for all $t \in [t_0, t^{\flat}]$,
    \begin{align*}
        \normLp{U(t)}{2}{\R^d} & \leqslant \Big( 1 + M_1 \| G_{\alpha}(t) \| \Big) \normLp{V(t)}{2}{\R^d} + \normLp{\widetilde{V}(t)}{2}{\R^d}\\
        & \leqslant \widetilde{C} \Big( 2 + M_1 \| G_{\alpha}(t) \| \Big)\\
        & \leqslant \text{max}(2\widetilde{C}, \widetilde{C}M_1) \Big( 1 + \| G_{\alpha}(t) \| \Big)
    \end{align*}
    which concludes the step one thanks to Proposition \ref{lem:Galpha}.
\end{proof}

\noindent In the following proposition, we modify the solutions of \eqref{eqProfil} and \eqref{eqProfiltilde} using the phase \( G_{\alpha} \) defined in ~\eqref{def:phase}. This allows us to describe more precisely the behavior of the profiles as \( t \) approaches \( t^{\flat} \).

\begin{prop}[Ingoing profiles] \label{prop:ingoingprofiles} Under the assumptions of Proposition \ref{prop:normSigu}, there exist two functions $u_{\pm}^{\text{in},\alpha} \in \schwartz{\R^d}$ such that for all $k \in \N$, there exists $C_k > 0$ (independant of $\alpha$) such that for $t < t^{\flat}$, we have
\begin{equation}
\label{controluin}
\Bigg\| \text{Exp} \Big(\mp \dfrac{i}{2} G_{\alpha}(t) y \cdot y \Big) u_{\pm}(t) - u_{\pm}^{\text{in},\alpha} \Bigg\|_{\Sigma^k} \leqslant C_k \lvert t - t^{\flat} \lvert \Big( 1 + \Big\lvert \ln(\lvert t - t^{\flat} \lvert) \Big\lvert \Big).
\end{equation}
\end{prop}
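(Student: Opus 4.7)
The plan is to introduce the modified profile
\begin{equation*}
w_\pm(t,y) := \mathrm{Exp}\bigl(\mp \tfrac{i}{2} G_\alpha(t) y \cdot y\bigr) u_\pm(t,y)
\end{equation*}
and to show that $(w_\pm(t))_{t < t^\flat}$ is Cauchy in $\Sigma^k$ as $t \to t^\flat$, with the advertised rate $|t-t^\flat|(1+|\ln|t-t^\flat||)$. The limit will then define $u_\pm^{\text{in},\alpha}$, and its Schwartz regularity follows by running the argument for every $k \in \N$.

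First, I would derive the PDE satisfied by $w_\pm$. Conjugating $Q_\pm(t)$ by the quadratic phase, and using the decomposition $\mathrm{Hess}\,\lambda_\pm(q_\pm) = M_\pm^\alpha \pm g_\alpha$ from Proposition~\ref{prop:DLHesslambda} together with the identity $\partial_t G_\alpha = -g_\alpha$ on $t < t^\flat$ provided by Lemma~\ref{lem:galpha}, one checks that the singular contribution of $\pm g_\alpha$ cancels exactly against the time derivative of the phase, leaving
\begin{equation*}
i \partial_t w_\pm = \tfrac{1}{2}\bigl(\pm i \nabla - G_\alpha(t) y\bigr)^2 w_\pm + \tfrac{1}{2} M_\pm^\alpha(t) y \cdot y \, w_\pm.
\end{equation*}
All remaining coefficients are either logarithmically controlled ($G_\alpha$, by Proposition~\ref{lem:Galpha}) or have a uniformly bounded primitive ($M_\pm^\alpha$, by Proposition~\ref{prop:Malpha}(1)).

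Second, I would prove the uniform bound $\|w_\pm(t)\|_{\Sigma^k} \leq C_k$ on $[t_0, t^\flat)$, which is strictly sharper than the logarithmic bound that $u_\pm$ satisfies in Proposition~\ref{prop:normSigu}. A direct computation shows that the auxiliary vector $\tilde V$ used in the proof of that proposition has $L^2$-norm equal, up to a unit modulus factor, to $\|\nabla w_\pm\|_{L^2}$, while $V$ already controls $\|y w_\pm\|_{L^2}$. Hence the uniform $L^2$-bound on $V + \tilde V$ established there gives $\|w_\pm(t)\|_{\Sigma^1} \leq C$ uniformly in $t$ and $\alpha$; iterating the same scheme on vectors built from higher-order derivatives $(y^\alpha D^\beta w_\pm)_{|\alpha|+|\beta| \leq k}$ yields the full claim.

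Third, using the PDE together with this uniform bound, I would write
\begin{equation*}
w_\pm(t_2) - w_\pm(t_1) = -i\int_{t_1}^{t_2} \Bigl[\tfrac{1}{2}(\pm i\nabla - G_\alpha y)^2 w_\pm(s) + \tfrac{1}{2} M_\pm^\alpha(s) y \cdot y \, w_\pm(s)\Bigr] \diff s.
\end{equation*}
The first integrand is bounded in $\Sigma^k$ by $C_k(1 + \|G_\alpha(s)\|^2)$, and (since a power of a log integrates to $|t_2-t_1|$ times a log) its contribution is of the announced order. The main obstacle is the $M_\pm^\alpha$ term, because Proposition~\ref{prop:Malpha}(2) only provides the pointwise bound $\|M_\pm^\alpha(s)\| \leq M/\sqrt{\alpha^2 + r^2(s - t^\flat)^2}$, which is not uniformly integrable as $\alpha \to 0$. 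To circumvent this, I would integrate by parts against the primitive $P^\alpha(s) := -\int_s^{t^\flat} M_\pm^\alpha(\tau)\diff\tau$, which is uniformly bounded by Proposition~\ref{prop:Malpha}(1) and vanishes at $t = t^\flat$; the remaining integral features $P^\alpha(s) y \cdot y \, \partial_s w_\pm(s)$, expanded via the PDE into terms already controlled by the uniform $\Sigma^{k+2}$-bound on $w_\pm$ together with the integrability of $\|G_\alpha\|^2$, while the self-referential boundary contribution $P^\alpha(t_2) y \cdot y (w_\pm(t_2) - w_\pm(t_1))$ is absorbed by a Gronwall argument. Passing to the limit $t_1 \to t^\flat$ in $\Sigma^k$ for every $k$ yields the Schwartz function $u_\pm^{\text{in},\alpha}$ and the estimate \eqref{controluin}.
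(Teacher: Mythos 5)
Your construction is the same as the paper's in its essential points: you conjugate $u_\pm$ by $\mathrm{Exp}(\mp\frac{i}{2}G_\alpha(t)y\cdot y)$, observe that $\partial_t G_\alpha=-g_\alpha$ on $t<t^\flat$ cancels exactly the non-integrable part $\pm g_\alpha$ of $\mathrm{Hess}\,\lambda_\pm(q_\pm)$ from Proposition~\ref{prop:DLHesslambda}, and conclude by a Cauchy criterion in the complete space $\Sigma^k$. The paper does precisely this, writing $i\partial_t v_\pm^\alpha=\mathrm{Exp}(\mp\frac{i}{2}G_\alpha y\cdot y)\bigl(-\frac12\Delta u_\pm+\frac12 M_\pm^\alpha y\cdot y\,u_\pm\bigr)$. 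Where you diverge is in how the remaining $M_\pm^\alpha$ term is integrated in time, and that is where your argument has a gap.

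First, a secondary point: your claimed uniform (log-free) bound $\|w_\pm(t)\|_{\Sigma^k}\leqslant C_k$ is correct at order $k=1$ — the observation that $V$ and $\tilde V$ in the proof of Proposition~\ref{prop:normSigu} are, up to a unimodular factor, $y w_\pm$ and $D_y w_\pm$ is accurate and rather nice — but it does not propagate cleanly to $k\geqslant 2$: commutators such as $[D_j,(G_\alpha y)_\ell]=-i(G_\alpha)_{j\ell}$ reintroduce factors of $\|G_\alpha(t)\|\sim|\ln|t-t^\flat||$, so at higher order you should only expect the logarithmic bounds that Proposition~\ref{prop:normSigu} actually records. This alone is not fatal, since logarithms are integrable. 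The genuine gap is in the integration by parts against $P^\alpha(s)=-\int_s^{t^\flat}M_\pm^\alpha$: after integrating by parts, the remaining integrand $P^\alpha(s)\,y\cdot y\,\partial_s w_\pm(s)$ contains, once $\partial_s w_\pm$ is expanded via the PDE, the term $\frac12 P^\alpha(s)\,y\cdot y\,M_\pm^\alpha(s)\,y\cdot y\,w_\pm(s)$, which carries the very same factor $\|M_\pm^\alpha(s)\|\lesssim(\alpha^2+r^2(s-t^\flat)^2)^{-1/2}$ you set out to avoid; it is not controlled by "the integrability of $\|G_\alpha\|^2$", and the procedure is therefore circular rather than conclusive. (The boundary term is also $[P^\alpha y\cdot y\,w_\pm]_{t_1}^{t_2}$, not $P^\alpha(t_2)y\cdot y(w_\pm(t_2)-w_\pm(t_1))$.) The paper's resolution is simpler and avoids the detour: it keeps the pointwise bound $1+\|M_\pm^\alpha(s)\|\leqslant M(\alpha^2+r^2(s-t^\flat)^2)^{-1/2}$ from Proposition~\ref{prop:Malpha}(2), recognizes this factor as $|h_\alpha'(s)|$, so that the full bound $\|\partial_s v_\pm^\alpha(s)\|_{\Sigma^k}\lesssim(1+|h_\alpha(s)|)^{k+1}|h_\alpha'(s)|$ is an exact derivative of a power of $(1+|h_\alpha|)$; the integral is then computed explicitly and estimated via Remark~\ref{rq:halpha}. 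If you want to salvage your route, the cleanest fix is to drop the integration by parts entirely and instead use the finer structure of $M_\pm^\alpha$ visible in the proof of Proposition~\ref{prop:Malpha}(1): every entry of $M_\pm^\alpha$ is either uniformly bounded or has an explicit, sign-definite, uniformly bounded primitive, so $\int_{t}^{t^\flat}\|M_\pm^\alpha(s)\|\,\dint s$ is directly controlled without any Gronwall absorption.
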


\begin{proof}[Proof of Proposition ~\ref{prop:ingoingprofiles}] For all $z_0 \in \mathcal{A}_{\pm}(M,\alpha_0,r_0,\delta,\mathrm{I})$ and $t \in [t_0, t^{\flat})$, we set $$v_{\pm}^{\alpha}(t) = \text{Exp} \Big(\mp \dfrac{i}{2} G_{\alpha}(t) y \cdot y \Big) u_{\pm}(t).$$

\noindent The aim is to show that there exist $u_{\pm}^{\text{in},\alpha}$ such that $v_{\pm}^{\alpha}(t) \underset{t \to t^{\flat}}{\longrightarrow} u_{\pm}^{\text{in},\alpha}$ in the space ~$\Sigma^k$. Since the space $\Sigma^k$ is complete, we will prove this using a Cauchy criterion. Let us consider $(t,t') \in \R^2$. We can write
\begin{equation*}
    \Big\| v_{\pm}^{\alpha}(t) - v_{\pm}^{\alpha}(t ')\Big\|_{\Sigma^k} = \Bigg\| \ds\int_{t}^{t'} \partial_t v_{\pm}^{\alpha}(s) \diff s \Bigg\|_{\Sigma^k} \leqslant \ds\int_{t}^{t'} \Big\| \partial_t v_{\pm}^{\alpha}(s) \Big\|_{\Sigma^k} \diff s.
\end{equation*}

\noindent According to Lemma \ref{lem:galpha}, we have $$i \partial_t \Big(\mp \dfrac{i}{2} G_{\alpha}(t) y \cdot y \Big) = \mp \dfrac{1}{2} g_{\alpha}(t) y \cdot y$$ and thanks to the definition \eqref{eq:g_alpha} of $g_{\alpha}$, we deduce that $$i \partial_t v_{\pm}^{\alpha}(s) = \text{Exp} \Big(\mp \dfrac{i}{2} G_{\alpha}(s) y \cdot y \Big) \times \Bigg( i \partial_t u_{\pm} \mp \dfrac{r}{2\sqrt{\alpha^2 + r^2 (s - t^{\flat})^2}} \Gamma_{\alpha}(s) y \cdot y \Bigg).$$

\noindent Using the equation $i \partial_t u_{\pm} = -\dfrac{1}{2} \Delta_y u_{\pm} + \dfrac{1}{2} \text{Hess} \, \lambda_{\pm}(q_{\pm}) y \cdot y u_{\pm}$ and Proposition \ref{prop:DLHesslambda}, we obtain $$i \partial_t v_{\pm}(s) =\text{Exp} \Big(\mp \dfrac{i}{2} G_{\alpha}(s) y \cdot y \Big) \times \Bigg( -\dfrac{1}{2} \Delta_y u_{\pm} + \dfrac{1}{2} M_{\pm}^{\alpha}(s) y \cdot y u_{\pm}\Bigg).$$

\noindent Thus, for all $k \in \N$, there exist $C_k > 0$ and $N_k \in \N$ such that for all $\alpha \geqslant 0$ and all $t \in [t_0, t^{\flat}]$,
\begin{align*}
\Big\| \partial_t v_{\pm}^{\alpha}(s) \Big\|_{\Sigma^k} & \leqslant C \Big\| u_{\pm}(s) \Big\|_{\Sigma^{k+2}} \Big( 1 + \Big\| M^{\alpha}_{\pm}(s) \Big\| \Big) \Big( 1 + \Big\| G_{\alpha}(s) \Big\| \Big)^{k}\\
& \leqslant \widetilde{C} \Big\| u_{\pm}(s) \Big\|_{\Sigma^{k+2}} \dfrac{( 1 + \lvert h^{\alpha}(s) \lvert)^{k}}{\sqrt{\alpha^2 + r^2(s-t^{\flat})^2}}
\end{align*}

\noindent according to Propositions \ref{prop:Malpha} and \ref{lem:Galpha}. Using the proof of Proposition \ref{prop:normSigu}, we known that
\begin{equation*}
    \Big\| u_{\pm}(s) \Big\|_{\Sigma^{k+2}} \leqslant C_{k} ( 1 + \lvert h^{\alpha}(s) \lvert).
\end{equation*}
\noindent Thus, we deduce that $$\Big\| \partial_t v_{\pm}^{\alpha}(s) \Big\|_{\Sigma^k} \leqslant \dfrac{C_k ( 1 + \lvert h^{\alpha}(s) \lvert)^{k+1}}{\sqrt{\alpha^2 + r^2(s-t^{\flat})^2}}$$ with $C_k > 0$ and $\widetilde{N_k} \in \N$. Noticing that $\dfrac{1}{\sqrt{\alpha^2 + r^2(s-t^{\flat})^2}} = \text{sgn}(s-t^{\flat}) h'_{\alpha}(s)$, we deduce that the function $s \mapsto \dfrac{C_k ( 1 + \lvert h^{\alpha}(s) \lvert)^{k+1}}{\sqrt{\alpha^2 + r^2(s-t^{\flat})^2}}$ is integrable in $[t,t']$ and that $\ds\int_{t}^{t'} \Big\| \partial_t v_{\pm}^{\alpha}(s) \Big\|_{\Sigma^k} \diff s \longrightarrow 0$ when $t$ tends to $t'$. Consequently, $v_{\pm}^{\alpha}(t)$ has a limit in $\Sigma^k$ when $t$ tends to $t^{\flat}$ and we denote $u_{\pm}^{\text{in},\alpha}$ this limit. The control given in \ref{rq:halpha} allows to control $\Big\| v_{\pm}^{\alpha}(t) - u_{\pm}^{\text{in},\alpha} \Big\|_{\Sigma^k}$ for $t$ close to $t^{\flat}$ as given in \eqref{controluin}.
\end{proof}

\begin{remark}
In \cite[Corollary $1.9$]{FGH2021}, the phase used to define the ingoing and outgoing profiles is given by \[ \dfrac{i}{2} \Gamma_0 y \cdot y \ln\left(\lvert t - t^{\flat} \rvert\right). \] Even if our analysis also covers the case $\alpha = 0$, we observe that a different phase arises in this setting. Indeed, we have
\[
G_0(t) = \dfrac{i}{2} \Gamma_0 y \cdot y \ln\left(\lvert t - t^{\flat} \rvert\right) + \Gamma_0 \ln(2r) + \Gamma_1.
\] Thus, the function denoted by $u_{\pm}^{\text{in}}$ in \cite{FGH2021} differs from the $u_{\pm}^{\text{in},0}$ defined here by the phase factor ~$e^{i \Gamma_0 \ln(2r) + i \Gamma_1}$.
\end{remark}

\noindent Thanks to Proposition \ref{prop:ingoingprofiles}, we have constructed, for $s,t \in [t_0,t^{\flat})$, a two-parameter semi-group $\mathcal{U}(t,s)$ that provides the solution at time $t$ in terms of that at time $s$ for $s < t$. This propagator satisfies
\[ \mathcal{U}_{\pm}(t,s)\, u_{\pm}(s) = u_{\pm}(t) \quad \text{and} \quad \lim_{t \to t^{\flat},\, t < t^{\flat}} \mathcal{U}_{\pm}(t,s)\, u_{\pm}(s) = u_{\pm}^{\mathrm{in},\alpha}.\]
As explained in the proof of \cite[Corollary~$1.9$]{FGH2021}, using this semi-group, it is possible to construct another two-parameters semi-group $\mathcal{U}(s,t)$, which provides the solution at time $s$ in terms of that at time $t$ for $s < t$. It satisfies
\[\mathcal{U}_{\pm}(s,t)\, u_{\pm}(t) = u_{\pm}(s) \quad \text{and} \quad \quad 
\mathcal{U}_{\pm}(s,t^{\flat})\, u_{\pm}^{\mathrm{in},\alpha} = u_{\pm}(s)\]
and for all $s \in [t_0,t^{\flat})$, $u_{\pm}(s)$ verifies the control \eqref{controluin}. Thus, by prescribing the condition at $t^{\flat}$ and adapting the previous proof for $t > t^{\flat}$, we obtain the following proposition.

\begin{prop}[Outgoing profiles] \label{prop:outgoingprofiles} Let us consider $v_{\pm}^{\text{out},\alpha} \in \schwartz{\R^d}$ and $\widetilde{v}_{\pm}^{\text{out},\alpha} \in \schwartz{\R^d}$. Under the assumptions of Proposition~\ref{prop:existenceu}, there exist functions $v_{\pm}(t)$ and $\widetilde{v}_{\pm}(t)$, defined for $t > t^{\flat}$, satisfying Equations~\eqref{eqProfil} and \eqref{eqProfiltilde} with respectively $v_{\pm}^{\text{out},\alpha}$ and $\widetilde{v}_{\pm}^{\text{out},\alpha}$ as initial data at time $t^{\flat}$, such that for all $k \in \mathbb{N}$ there exist constants $C_k > 0$ and $\widetilde{C}_k > 0$ (independent of $\alpha$) satisfying $$\Bigg\lvert \Bigg\lvert \text{Exp} \Big(\pm \dfrac{i}{2} G_{\alpha}(t) y \cdot y \Big) v_{\pm}(t) - v_{\pm}^{\text{out},\alpha} \Bigg\lvert \Bigg\lvert_{\Sigma^k} \leqslant C_k \lvert t - t^{\flat} \lvert \Big( 1 + \Big\lvert \ln ( \lvert t-t^{\flat} \lvert) \Big\lvert \Big), \quad \text{for } t > t^{\flat}$$
\noindent and 
$$\Bigg\lvert \Bigg\lvert \text{Exp} \Big(\pm \dfrac{i}{2} G_{\alpha}(t) y \cdot y \Big) \widetilde{v}_{\pm}(t) - \widetilde{v}_{\pm}^{\text{out},\alpha} \Bigg\lvert \Bigg\lvert_{\Sigma^k} \leqslant C_k \lvert t - t^{\flat} \lvert \Big( 1 + \Big\lvert \ln ( \lvert t-t^{\flat} \lvert) \Big\lvert \Big) \quad \text{for } t > t^{\flat}.$$
\end{prop}

%%%%%%%%%%%%%%%%%%%%%%%%%%%%%%%%%%%%%%%%%%%%%%%%%%%%%%%%%%%

\section{Passing through the gap region}
\label{sec:passage}

\noindent Now, we focus on the behavior between the time $t^{\flat} - \delta$ and $t^{\flat} + \delta$. According to our assumptions, we know that classical trajectories reach $\Sigma_{\text{nd}}$ during this time interval. In this section, we prove the main theorem (Theorem~\ref{theo:main}), which provides an approximation of the solution \( \psi^{\varepsilon} \) of Equation~\eqref{equation}, when $\varepsilon \to 0$, after passing close to the crossing set.

\subsection{Study of the new unknown}

\noindent As explained in the introduction, we first perform the change of time \eqref{newtime} and unknown \eqref{newinconnue}. This allows us to study the family $(u^{\varepsilon})_{\varepsilon > 0}$ and the next proposition identifies the equation it satisfies. We begin by establishing the equation satisfied by ~$u^{\varepsilon}$, as done in ~\cite{FGH2021}.

\begin{prop}
\label{prop:studyueps}
    Let $k \in \N$. The family $(u^{\varepsilon})_{\varepsilon > 0}$ satisfies for all $(s,y) \in \R \times \R^d$
    \begin{equation}
    \label{eqUeps}
        i \partial_s u^{\varepsilon} = A \Big( sr\Vec{e}_{\theta} + dw(q^{\flat})y + \dfrac{\alpha \Vec{e}_{\theta}^{\perp}}{\sqrt{\varepsilon}} \Big) u^{\varepsilon} + \sqrt{\varepsilon} \Big( - \dfrac{1}{2} \Delta u^{\varepsilon} + B^{\varepsilon}(s,y) u^{\varepsilon} \Big)
    \end{equation}

\noindent where $B^\varepsilon$ is a smooth hermitian matrix-valued potential with the following properties: there exist constants $C_0, C_1 > 0$ such that for all $s \in [-s_0, s_0]$ and $y \in \R^d$
\begin{itemize}
    \item $\lvert B^{\varepsilon}(s,y) \lvert \, \leqslant C_0 (s^2 \langle \sqrt{\varepsilon} \lvert y \lvert \rangle + \lvert y \lvert^2),$
    \item $\lvert \nabla B^{\varepsilon}(s,y) \lvert \, \leqslant C_1 (s^2 \sqrt{\varepsilon} \lvert y \lvert^2 + \lvert y \lvert + \sqrt{\varepsilon} s^2),$
    \item for all $\lvert \beta \lvert \, \geqslant 2$, $\lvert \partial_y^{\beta} B^{\varepsilon}(s,y) \lvert \, \leqslant C_{\beta} \varepsilon^{\frac{\lvert \beta \lvert - 2}{2}} \langle \sqrt{\varepsilon} y \rangle^2$ with $C_{\beta} > 0$.
\end{itemize}
\end{prop}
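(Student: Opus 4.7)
The strategy is a direct substitution of the ansatz \eqref{newinconnue} into \eqref{equation}, combined with a Taylor expansion of the potential $V$ around the trajectory $q_0$ and, in turn, around $q^{\flat}$. Writing the ansatz as $\psi^{\varepsilon}(t,x) = \varepsilon^{-d/4} e^{i\Phi(t,x)/\varepsilon} u^{\varepsilon}(s,y)$ with $\Phi(t,x) = S_0(t;t^{\flat},z^{\flat}) + p_0(t)\cdot(x-q_0(t))$, $s = (t-t^{\flat})/\sqrt{\varepsilon}$ and $y = (x-q_0(t))/\sqrt{\varepsilon}$, a direct chain-rule computation gives $\partial_t u^{\varepsilon} = \frac{1}{\sqrt{\varepsilon}}(\partial_s u^{\varepsilon} - p_0\cdot\nabla_y u^{\varepsilon})$. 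Substituting into \eqref{equation} and using the Hamilton equations $\dot{q}_0 = p_0$, $\dot{p}_0 = -\nabla v(q_0)$ and the identity $\dot{S}_0 = |p_0|^2/2 - v(q_0)$ from \eqref{eqclassicaltrajzero}--\eqref{actionzero}, the $\mathcal{O}(1/\varepsilon)$ terms arising from the phase and the $p_0\cdot\nabla_y$ contributions all cancel. Dividing the remaining equation by $\sqrt{\varepsilon}$ produces
\[
i\partial_s u^{\varepsilon} = \frac{1}{\sqrt{\varepsilon}}\Big[V(q_0+\sqrt{\varepsilon}y) - v(q_0)I_2 - \sqrt{\varepsilon}\,\nabla v(q_0)\cdot y\,I_2\Big]u^{\varepsilon} - \frac{\sqrt{\varepsilon}}{2}\Delta_y u^{\varepsilon}.
\]

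The next step is to expand the right-hand side. For the scalar part, the integral remainder $v(q_0+\sqrt{\varepsilon}y) - v(q_0) - \sqrt{\varepsilon}\nabla v(q_0)\cdot y = \varepsilon\int_0^1 (1-\tau)\mathrm{Hess}\,v(q_0+\tau\sqrt{\varepsilon}y)(y,y)\,\diff\tau$ yields, after division by $\sqrt{\varepsilon}$, a $\sqrt{\varepsilon}$-prefactored contribution to $B^{\varepsilon}$ bounded by $C|y|^2$ thanks to Assumption \ref{hypsurV2}. For the matrix part $A(w(q_0+\sqrt{\varepsilon}y))$, I would combine the spatial expansion $w(q_0+\sqrt{\varepsilon}y) = w(q_0) + \sqrt{\varepsilon}\diff w(q_0)y + \varepsilon R_w$ with the temporal expansion \eqref{DLp0q0} of $q_0$ around $t^{\flat}$; together with the identities $w(q^{\flat}) = \alpha \Vec{e}_{\theta}^{\perp}$ and $\diff w(q^{\flat})p^{\flat} = r\Vec{e}_{\theta}$, this gives
\[
w(q_0(t^{\flat}+s\sqrt{\varepsilon})+\sqrt{\varepsilon}y) = \alpha \Vec{e}_{\theta}^{\perp} + \sqrt{\varepsilon}\bigl(sr\Vec{e}_{\theta} + \diff w(q^{\flat})y\bigr) + \mathcal{O}\bigl(\varepsilon(s^2 + s|y| + |y|^2)\bigr).
\]
By linearity of the map $w \mapsto A(w)$, dividing by $\sqrt{\varepsilon}$ produces precisely the leading term $A\bigl(sr\Vec{e}_{\theta} + \diff w(q^{\flat})y + \tfrac{\alpha}{\sqrt{\varepsilon}}\Vec{e}_{\theta}^{\perp}\bigr)$ of \eqref{eqUeps}, while the quadratic remainders accumulate into the prefactor of $B^{\varepsilon}$.

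It remains to verify the three estimates on $B^{\varepsilon}$. Hermitianity follows from the hermitianity (in fact, real symmetry) of $V$ and from the fact that the subtracted affine piece is a real scalar. The pointwise bound $|B^{\varepsilon}(s,y)| \leq C_0(s^2\langle\sqrt{\varepsilon}|y|\rangle + |y|^2)$ is obtained by separating the purely spatial remainder (producing the $|y|^2$ term) from the temporal and mixed remainders generated by \eqref{DLp0q0}: the cross terms $s|y|$ are dominated by $s^2+|y|^2$ via Young's inequality, and the factor $\langle\sqrt{\varepsilon}|y|\rangle$ arises naturally when one retains, in the integral remainders, Hessians of $v$ and $w$ evaluated at intermediate points $q_0+\tau\sqrt{\varepsilon}y$, since $\sqrt{\varepsilon}|y|$ measures the spatial displacement from $q_0$. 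The derivative estimates $|\partial_y^{\beta}B^{\varepsilon}| \leq C_{\beta}\varepsilon^{(|\beta|-2)/2}\langle\sqrt{\varepsilon}y\rangle^2$ follow by differentiating the integral remainders: each $\partial_y$ produces a $\sqrt{\varepsilon}$ factor via the chain rule on $y\mapsto q_0+\sqrt{\varepsilon}y$, giving the $\varepsilon^{(|\beta|-2)/2}$ prefactor, while Assumption \ref{hypsurV2} ensures that the relevant partial derivatives of $v$ and $w$ are uniformly bounded on $\R^d$, controlling the $\langle\sqrt{\varepsilon}y\rangle^2$ envelope. The main obstacle is not conceptual but a careful bookkeeping of the various Taylor tails to match exactly the prescribed polynomial growth in $(s,y)$ uniformly in $\varepsilon$; this requires systematically using integral-form remainders rather than order-of-magnitude estimates, together with the fact that $\alpha/\sqrt{\varepsilon}$ is bounded by Assumption \ref{hyp:comparaisonalphaeps}, which guarantees that the leading matrix term indeed has finite coefficients.
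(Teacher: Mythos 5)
Your proposal is correct and follows essentially the same route as the paper's proof: substitute the ansatz \eqref{newinconnue} into \eqref{equation}, use the Hamilton equations \eqref{eqclassicaltrajzero} and the action \eqref{actionzero} to cancel the leading-order phase terms, Taylor-expand $v$ and $w$ around $q_0$ with integral remainders, then expand $q_0$ in time around $t^{\flat}$ via \eqref{DLp0q0} to extract the leading matrix $A\big(sr\Vec{e}_{\theta} + \diff w(q^{\flat})y + \alpha\Vec{e}_{\theta}^{\perp}/\sqrt{\varepsilon}\big)$, with the bounds on $B^{\varepsilon}$ coming from Assumption \ref{hypsurV2}. Your write-up is in fact more detailed than the paper's own (very terse) proof, and the bookkeeping of the remainders is consistent with the stated estimates.
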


\begin{proof}[Proof of Proposition ~\ref{prop:studyueps}] Computations and the relation between $\psi^{\varepsilon}$ and $u^{\varepsilon}$ give
\begin{equation*}
    \Bigg[ i \sqrt{\varepsilon} \partial_s + \dfrac{\varepsilon}{2} \Delta_y - \Big(v(q_0 + \sqrt{\varepsilon} y) - v(q_0) - \nabla v(q_0) \cdot \sqrt{\varepsilon} y \Big) \Bigg] u^{\varepsilon} = A(w(q_0 + \sqrt{\varepsilon} y)) u^{\varepsilon}.
\end{equation*}
Applying the Taylor formula with an integral remainder close to $q_0$ for the function $v$ and $w$, we obtain 
\begin{equation*}
    i \partial_s u^{\varepsilon} = \dfrac{1}{\sqrt{\varepsilon}} A(w(q_0) + \sqrt{\varepsilon} \dint w(q_0) \cdot y) u^{\varepsilon} + \sqrt{\varepsilon} \Big( - \Delta_y u^{\varepsilon} + R(t,\sqrt{\varepsilon}y) y \cdot y u^{\varepsilon} \Big)
\end{equation*}
where $R$ depends on the remainder of the Taylor expansions. To conclude, we use Taylor expansions close to $t^{\flat}$ of $q_0$ (see \eqref{DLp0q0}) and of $t \mapsto w(q_0(t))$ and $t \mapsto \dint w(q_0(t))$. The remainder arising from the expansion, and its derivative, both have the desired properties stated in the proposition, thanks to Assumptions \ref{hypsurV2}.
\end{proof}

\subsection{The Landau--Zener model}
\label{subsec:LZmodel}

For $\eta \in \C^2$, we consider the model problem 
\vspace{-0.1cm}
\begin{equation}
\label{eqmodel}
    \left\{
        \begin{array}{r c l }
        i \partial_s w^{\varepsilon} & = & A \Big( sr\Vec{e}_{\theta} + \eta  + \dfrac{\alpha \Vec{e}_{\theta}^{\perp}}{\sqrt{\varepsilon}} \Big) w^\varepsilon  \\
        w^\varepsilon(0,\eta) & = & w_0^{\varepsilon}(\eta).
        \end{array}
    \right.
\end{equation}

\noindent The goal of this subsection is to compare the solution of the model problem with the solution of ~\eqref{eqUeps}. To achieve this, we first transform the system \eqref{eqmodel} into a Landau--Zener problem, for which the asymptotics of the solution are known when $\eta \, + \, \dfrac{\alpha}{\sqrt{r \varepsilon}}$ is in a compact set.\\

\noindent Let $\phi$ be a real number such that $\Vec{e}_{\phi} = - \Vec{e}_{\theta}$ and $\mathcal{R}(\phi)$ be the rotation matrix \begin{equation}
\label{eqmatrot}
    \mathcal{R}(\phi) = \begin{pmatrix}
        \cos \Big( \dfrac{\phi}{2} \Big) & - \sin \Big( \dfrac{\phi}{2} \Big)\\
        \sin \Big( \dfrac{\phi}{2} \Big) & \cos \Big( \dfrac{\phi}{2} \Big)
    \end{pmatrix}.
\end{equation}
\noindent If $w^\varepsilon$ is the solution of the system \eqref{eqmodel}, the function $u_{\text{LZ}}^{\varepsilon}$ defined for all $(s,\eta) \in \R \times \C^2$ by 
\begin{equation}
\label{eqLZv}
    u_{\text{LZ}}^{\varepsilon}(s,\eta) = \mathcal{R}(\phi)^{-1} w^\varepsilon \Big( \frac{s}{\sqrt{r}} , \sqrt{r} \eta \Big),
\end{equation} is solution of the Landau--Zener problem \eqref{eqLZ} with $z = z(\eta) := \Big( \Vec{e}_{\theta} \cdot \eta, \Vec{e}_{\theta}^{\perp} \cdot \eta + \dfrac{\alpha}{\sqrt{r \varepsilon}} \Big)$. A detailed proof is provided in Appendix \ref{appendixB}, Lemma \ref{lem:LZproblem}. Using the appendices of \cite{fermaniangerard2002} and \cite{kammererlasser2008} to study the solution of the Landau--Zener model and the relation \eqref{eqLZv}, we obtain the following proposition. We adapt the proof from \cite{FGH2021} to the case $\alpha \neq 0$.

\begin{prop}
\label{prop:soleqmodel}
    Let $R$ be a positive real number such that $R \geqslant 1$. We assume that \[ \alpha \, \leqslant \dfrac{R}{2} \sqrt{\varepsilon} \quad \text{and} \quad \lvert \eta \lvert \, \leqslant \dfrac{R}{2}. \]    
    \noindent Then, there exist $\alpha_1^{\text{in}}, \alpha_2^{\text{in}}, \alpha_1^{\text{out}}, \alpha_2^{\text{out}} \in \schwartz{\R^2}$ such that
    \begin{align}
        w^{\varepsilon}(s, \eta) & = e^{i \Lambda^{\varepsilon,\alpha}(s,\eta)} \alpha_1^{\text{in}}(\eta) \Vec{Y}_{\flat}^{\perp} + e^{- i \Lambda^{\varepsilon,\alpha}(s,\eta)} \alpha_2^{\text{in}}(\eta) \Vec{Y}_{\flat} + \mathcal{O}\Big( R^3 \lvert s \lvert^{-1} \Big) \quad \text{when} \, \, \,  s \to - \infty, \label{LZminusin}\\
        w^{\varepsilon}(s, \eta) & = e^{i \Lambda^{\varepsilon,\alpha}(s,\eta)} \alpha_1^{\text{out}}(\eta) \Vec{Y}_{\flat}^{\perp} + e^{- i \Lambda^{\varepsilon,\alpha}(s,\eta)} \alpha_2^{\text{out}}(\eta) \Vec{Y}_{\flat} + \mathcal{O}\Big( R^3 \lvert s \lvert^{-1} \Big) \quad \text{when} \, \, \,  s \to + \infty, \label{LZplusin}
    \end{align}
where $\Lambda^{\varepsilon,\alpha}$ is defined in \eqref{phase}. Besides, we have the following relation
\begin{equation}
    \label{passage}
    \begin{pmatrix}
        \alpha_1^{\text{out}} \\
        \alpha_2^{\text{out}}
    \end{pmatrix} = \mathcal{S}\Big( \dfrac{\eta \cdot \Vec{e}_{\theta}^{\perp}}{\sqrt{r}} + \dfrac{\alpha}{\sqrt{r \varepsilon}}\Big) \begin{pmatrix}
        \alpha_1^{\text{in}} \\
        \alpha_2^{\text{in}}
    \end{pmatrix} \quad \text{with} \quad \mathcal{S}(z) = \begin{pmatrix}
        a(z) & -\overline{b}(z) \\
        b(z) & a(z)
    \end{pmatrix} \quad \text{for } z \in \C,
\end{equation}
\noindent where the coefficients $a$ and $b$ are given for all $z \in \R$ by \eqref{fonctionaetb}.
\end{prop}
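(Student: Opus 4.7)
The plan is to transport \eqref{eqmodel} onto the canonical Landau--Zener system \eqref{eqLZ} through the change of unknown \eqref{eqLZv}, and then invoke the asymptotic analysis developed in the appendices of \cite{fermaniangerard2002} and \cite{kammererlasser2008}. Compared with \cite{FGH2021}, the new feature is that the offset $\alpha \Vec{e}_{\theta}^{\perp}/\sqrt{\varepsilon}$ in the potential translates into a constant shift of the Landau--Zener gap parameter, and we must verify that all estimates remain uniform under the standing assumption $|\alpha|/\sqrt{\varepsilon} \leqslant R/2$.

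First, I would establish Lemma~\ref{lem:LZproblem} by a direct computation: the rotation $\mathcal{R}(\phi)$ of \eqref{eqmatrot}, chosen so that $\Vec{e}_{\phi} = -\Vec{e}_{\theta}$, simultaneously diagonalizes $A(\Vec{e}_{\theta})$ --- sending its eigenvectors $\Vec{V}_{\theta}^{\perp}, \Vec{V}_{\theta}$ to the canonical basis $e_{1}, e_{2}$ of $\C^{2}$ --- and conjugates $A(\Vec{e}_{\theta}^{\perp})$ to an off-diagonal matrix. After the rescaling $s \mapsto s/\sqrt{r}$, $\eta \mapsto \sqrt{r}\,\eta$, the function $u_{\text{LZ}}^{\varepsilon}$ of \eqref{eqLZv} satisfies \eqref{eqLZ} with parameter $z(\eta) = \bigl(\Vec{e}_{\theta} \cdot \eta,\ \Vec{e}_{\theta}^{\perp} \cdot \eta + \alpha/\sqrt{r\varepsilon}\bigr)$. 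Under the hypotheses $|\eta| \leqslant R/2$ and $|\alpha|/\sqrt{\varepsilon} \leqslant R/2$, one has $|z(\eta)| \leqslant C R$ uniformly.

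Next, I would apply the standard Landau--Zener asymptotic analysis: differentiating \eqref{eqLZ} once more turns each component of $u_{\text{LZ}}^{\varepsilon}$ into a Weber equation, whose two independent solutions are parabolic cylinder functions with explicit oscillatory asymptotics at infinity. From the appendices of \cite{fermaniangerard2002} and \cite{kammererlasser2008}, for $|z| \leqslant C R$ one obtains coefficients $\beta_{1}^{\pm}, \beta_{2}^{\pm} \in \schwartz{\R^{2}}$ (Schwartz thanks to smooth dependence of \eqref{eqLZ} on $z$ and adiabatic decay for large $|z|$) such that
\begin{equation*}
u_{\text{LZ}}^{\varepsilon}(s,z) = e^{i\mu(s,z)} \beta_{1}^{\pm}(z) e_{1} + e^{-i\mu(s,z)} \beta_{2}^{\pm}(z) e_{2} + \mathcal{O}\bigl(R^{3}|s|^{-1}\bigr) \quad \text{as } s \to \pm \infty,
\end{equation*}
uniformly in $z$, together with the algebraic connection formula $(\beta_{1}^{+}, \beta_{2}^{+})^{\top} = \mathcal{S}(z_{2}) (\beta_{1}^{-}, \beta_{2}^{-})^{\top}$, where $\mathcal{S}$ is the unitary matrix of \eqref{passage} and its entries $a, b$ are given in \eqref{fonctionaetb} via the Stokes connection formulas for the Weber equation involving the $\Gamma$-function. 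The phase takes the form $\mu(s,z) = \tfrac{1}{2}\bigl[(s+z_{1})\sqrt{(s+z_{1})^{2} + z_{2}^{2}} + z_{2}^{2}\ln\bigl(|s+z_{1}|+\sqrt{(s+z_{1})^{2}+z_{2}^{2}}\bigr)\bigr]$.

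To conclude, I would undo the change \eqref{eqLZv}: pulling back through $s \mapsto s\sqrt{r}$, $\eta \mapsto \eta/\sqrt{r}$, the phase $\mu$ reduces to exactly $\Lambda^{\varepsilon,\alpha}(s,\eta)$ of \eqref{phase} up to $\mathcal{O}(1/s)$ terms absorbed in the remainder; defining $\alpha_{j}^{\text{in}}(\eta)$ and $\alpha_{j}^{\text{out}}(\eta)$ from $\beta_{j}^{\mp}(z(\eta/\sqrt{r}))$ via $\mathcal{R}(\phi)$ yields \eqref{LZminusin}--\eqref{LZplusin} in the basis $(\Vec{V}_{\theta}^{\perp}, \Vec{V}_{\theta})$, and the Landau--Zener connection formula transfers directly to give \eqref{passage}. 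The hard part will be the bookkeeping: checking that the reconstructed phase matches $\Lambda^{\varepsilon,\alpha}$ exactly, absorbing every $\alpha$-dependent constant into the profiles $\alpha_{j}^{\text{in/out}}$, and verifying that the Landau--Zener remainder $\mathcal{O}(R^{3}|s|^{-1})$ genuinely stays uniform in $z$ despite the constant contribution $\alpha/\sqrt{r\varepsilon}$ to the gap parameter.
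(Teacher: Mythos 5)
Your proposal is correct and follows essentially the same route as the paper: conjugate \eqref{eqmodel} to the Landau--Zener system via the rotation $\mathcal{R}(\phi)$ and the rescaling $(s,\eta)\mapsto(s/\sqrt{r},\sqrt{r}\eta)$, invoke the appendices of \cite{fermaniangerard2002} and \cite{kammererlasser2008} for the asymptotics and the scattering matrix on the compact $\lvert z\rvert\lesssim R$, then pull back, expand the logarithm in the phase to pick up the $\mathcal{O}(R^{3}\lvert s\rvert^{-1})$ remainder, and identify $\Vec{V}_{\theta},\Vec{V}_{\theta}^{\perp}$ with $\zeta\mathcal{R}(\phi)e_{2},\zeta\mathcal{R}(\phi)e_{1}$. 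The only cosmetic difference is that you carry the exact eikonal $\mu$ rather than its asymptotic form, whose discrepancy with $\Lambda^{\varepsilon,\alpha}$ is an $s$-independent $\mathcal{O}(R^{2})$ constant that must go into the profiles (as you note) rather than into the $\mathcal{O}(1/s)$ remainder.
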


\begin{proof}[Proof of Proposition ~\ref{prop:soleqmodel}]
    Since we can take $\eta + \dfrac{\alpha}{\sqrt{r \varepsilon}}$ in a fixed compact of size $R$, using the appendices of \cite{fermaniangerard2002} and \cite{kammererlasser2008}, when $s \to \pm \infty$, we have

    \begin{equation*}
        u_{\text{LZ}}^{\varepsilon}(s, \eta) = e^{i \lambda^{\varepsilon,,\alpha}(s,\eta)} \begin{pmatrix}
            u_1^{\pm}(z_2(\eta)) \\ 
            0
        \end{pmatrix} + e^{- i \lambda^{\varepsilon,\alpha}(s,\eta)} \begin{pmatrix}
            0 \\ 
            u_2^{\pm}(z_2(\eta))
        \end{pmatrix} + \mathcal{O}\Big( R^2 \lvert s \lvert^{-1} \Big),
    \end{equation*}

    \noindent with $\lambda^{\varepsilon,\alpha}$ given for all $(s,\eta) \in \R \times \R^2$ by

    \begin{equation*}
        \lambda^{\varepsilon,\alpha}(s,\eta) = \dfrac{1}{2} \Bigg[ \Big( s + \eta \cdot \Vec{e}_{\theta} \Big)^2 + \Big( \eta \cdot \Vec{e}_{\theta}^{\perp} + \dfrac{\alpha}{\sqrt{r\varepsilon}} \Big)^2 \ln \lvert s + \eta \cdot \Vec{e}_{\theta} \lvert \Bigg]
    \end{equation*}

\noindent and with $(u_1^{\pm}, u_2^{\pm}) \in S(\R^2)$ that satisfy the following relation for all $\eta \in \R^2$
\begin{equation*}
    \begin{pmatrix}
        u_1^{+}(\eta) \\
        u_2^{+}(\eta)
    \end{pmatrix} = \begin{pmatrix}
        a(\eta) & -\overline{b}(\eta) \\
        b(\eta) & a(\eta)
    \end{pmatrix} \begin{pmatrix}
        u_1^{-}(\eta) \\
        u_2^{-}(\eta)
    \end{pmatrix} .
\end{equation*}
Since for all $(s,\eta) \in \R \times \R^2$, $w^{\varepsilon}(s, \eta) = \mathcal{R}(\phi) u_{\text{LZ}}^{\varepsilon} \Big(s\sqrt{r}, \dfrac{\eta}{\sqrt{r}} \Big)$ with $\mathcal{R}(\phi)$ given in \eqref{eqmatrot}, we can write

\begin{equation*}
        w^{\varepsilon}(s, \eta) = e^{i \widetilde{\lambda}^{\varepsilon}(s,\eta)} u_1^{\pm}\Big(z_2(\frac{\eta}{\sqrt{r}})\Big)  \mathcal{R}(\phi) \begin{pmatrix}
            1 \\ 
            0
        \end{pmatrix} + e^{- i \widetilde{\lambda}^{\varepsilon}(s,\eta)} u_2^{\pm}(z_2(\frac{\eta}{\sqrt{r}})) \mathcal{R}(\phi)\begin{pmatrix}
            0 \\ 
            1
        \end{pmatrix} + \mathcal{O}\Big( R^2 \lvert s \lvert^{-1} \Big),
    \end{equation*}
\noindent where   
    \begin{align*}
    \widetilde{\lambda}^{\varepsilon}(s,\eta) & = \lambda \Big(s\sqrt{r}, \dfrac{\eta}{\sqrt{r}} \Big)\\
    & = \dfrac{1}{2r} \Bigg[ \Big( sr + \eta \cdot \Vec{e}_{\theta} \Big)^2 + \Big( \eta \cdot \Vec{e}_{\theta}^{\perp} + \dfrac{\alpha}{\sqrt{\varepsilon}} \Big)^2 \ln \Big\lvert s\sqrt{r} + \dfrac{\eta \cdot \Vec{e}_{\theta}}{\sqrt{r}} \Big\lvert \Bigg].
    \end{align*}

\noindent As $\lvert \eta \lvert \, \leqslant \dfrac{R}{2}$, we have $\dfrac{\eta \cdot \Vec{e}_{\theta}}{sr} \underset{s \to \pm \infty}{\longrightarrow} 0$ and we have

\begin{equation*}
    \ln \Big\lvert s\sqrt{r} + \dfrac{\eta \cdot \Vec{e}_{\theta}}{\sqrt{r}} \Big\lvert = \ln \Big\lvert s\sqrt{r} \Big( 1 + \dfrac{\eta \cdot \Vec{e}_{\theta}}{sr} \Big) \Big\lvert = \ln \lvert s \sqrt{r} \lvert + \dfrac{\eta \cdot \Vec{e}_{\theta}}{sr} + \mathcal{O}\Big( R \lvert s \lvert^{-1} \Big) = \ln \lvert s \sqrt{r} \lvert + \mathcal{O}\Big( R \lvert s \lvert^{-1} \Big).
\end{equation*}

\noindent We deduce that

\begin{equation*}
    \widetilde{\lambda}^{\varepsilon}(s,\eta) = \dfrac{1}{2r} \Bigg[ \Big( sr + \eta \cdot \Vec{e}_{\theta} \Big)^2 + \Big( \eta \cdot \Vec{e}_{\theta}^{\perp} + \dfrac{\alpha}{\sqrt{\varepsilon}} \Big)^2 \ln \lvert s\sqrt{r} \lvert \Bigg] + \mathcal{O}\Big( R^3 \lvert s \lvert^{-1} \Big).
\end{equation*}

\noindent Thus, performing a Taylor expansion of the exponential, we deduce that when $s \to \pm \infty$
\vspace{-0.1cm}
\begin{equation*}
        w^{\varepsilon}(s, \eta) = e^{i \Lambda^{\varepsilon,\alpha}(s,\eta)} u_1^{\pm}(z_2(\frac{\eta}{\sqrt{r}}))  \mathcal{R}(\phi) \begin{pmatrix}
            1 \\ 
            0
        \end{pmatrix} + e^{- i \Lambda^{\varepsilon,\alpha}(s,\eta)} u_2^{\pm}(z_2(\frac{\eta}{\sqrt{r}})) \mathcal{R}(\phi)\begin{pmatrix}
            0 \\ 
            1
        \end{pmatrix} + \mathcal{O}\Big( R^3 \lvert s \lvert^{-1} \Big),
    \end{equation*}

\noindent with $\Lambda^{\varepsilon,\alpha}$ given in \eqref{phase}.

 \noindent According to the choice of $\phi$, we have the relation $\mathcal{R}(\phi)^{-1} A(\Vec{e}_{\theta}) \mathcal{R}(\phi) = \begin{pmatrix}
    -1 & 0 \\
    0 & 1
\end{pmatrix}$. Computations give the relation $$\begin{pmatrix}
    -1 & 0 \\
    0 & 1
\end{pmatrix} \mathcal{R}(\phi)^{-1} \Vec{Y}_{\flat} = \mathcal{R}(\phi)^{-1} \Vec{Y}_{\flat}.$$ 

\noindent Since $\Vec{Y}_{\flat}$ is normalized and $\mathcal{R}(\phi)^{-1}$ is orthogonal, we deduce that there exists $\zeta \in \lbrace -1 , 1 \rbrace$ such that $$\Vec{Y}_{\flat} = \zeta \mathcal{R}(\phi)\begin{pmatrix}
            0 \\ 
            1
        \end{pmatrix}.$$

\noindent Using the fact $\begin{pmatrix}
            0 \\ 
            1
        \end{pmatrix} = \mathcal{R}(\pi) \begin{pmatrix}
            1 \\ 
            0
        \end{pmatrix}$, the relation $\mathcal{R}(\phi) \mathcal{R}(\pi) = \mathcal{R}(\pi) \mathcal{R}(\phi)$ and the definition of $\Vec{Y}_{\flat}^{\perp}$ given in \eqref{eq:ICvecafter}, we deduce that \[ \Vec{Y}_{\flat}^{\perp} = \zeta \mathcal{R}(\phi)\begin{pmatrix}
            1 \\ 
            0
        \end{pmatrix}.\] The result of the proposition follows by setting $$ \alpha_j^{\text{in}}(\eta) = \zeta u_j^{-}\Big(z_2 \Big(\frac{\eta}{\sqrt{r}} \Big)\Big) \, \, \text{and} \, \, \alpha_j^{\text{out}}(\eta) = \zeta u_j^{+} \Big(z_2\Big(\frac{\eta}{\sqrt{r}}\Big)\Big)$$ for all $\eta \in \R^2$ and $j \in \lbrace 1, 2 \rbrace$.
\end{proof}

\subsection{Change of scale}
\label{subsec:newscale}

\noindent The goal of this subsection is to prove Lemma \ref{prop:changetime} and to analyze the phase factor as well as the directions $\vec{Y}_{\pm}$ and $\vec{\widetilde{Y}}_{+}$ using the change of variables \eqref{newinconnue}. To this end, we will use the Taylor expansions established in Section \ref{sec:analysisCQ}. We recall that since $\delta$ will be chosen as a suitable function of $\varepsilon$, we can always choose $\varepsilon$ small enough such that $\delta \leqslant \delta_0$, where $\delta_0$ is the minimal size for which the Taylor expansions in Propositions \ref{prop:DLpq}, \ref{prop:DLaction} and \ref{prop:DLPi} remain valid. Thus, we can use the results of Section \ref{sec:analysisCQ} for $t \in \mathrm{I}$ such that $\alpha \leqslant \lvert t - t^{\flat} \lvert \, \leqslant \delta$.

\begin{proof}[Proof of Lemma ~\ref{prop:changetime}] \noindent 
\begin{enumerate}[leftmargin=*, labelindent=0pt]
    \item The asymptotic expansions of the classical quantities, given in Propositions \ref{prop:DLpq} and \ref{prop:DLaction}, imply that for all $- s_0 \leqslant s \leqslant - \dfrac{\alpha}{\varepsilon}$, we have the following pointwise estimate for $t = t^{\flat} + \sqrt{\varepsilon} s$
\begin{multline*}
    \dfrac{i}{\varepsilon}  S_{-}(t;t_0,z_0) + \dfrac{i}{\varepsilon} p_{-}(t) \cdot (x - q_{-}(t)) = \dfrac{i}{\varepsilon} S_{-}^{\flat} + \dfrac{i}{\varepsilon}  S_{0}(t;t^{\flat},z^{\flat}) + \dfrac{i}{\sqrt{\varepsilon}} p_0(t) \cdot y + \dfrac{is}{2\sqrt{\varepsilon}} \sqrt{\alpha^2 + r^2 s^2 \varepsilon} \\ +  \dfrac{i \alpha^2}{2r \varepsilon} \Argsh{\dfrac{r}{\alpha}\sqrt{\varepsilon}s} + \dfrac{i}{r \sqrt{\varepsilon}} \eta \cdot \Vec{e}_{\theta} \Big( \sqrt{\alpha^2 + r^2 s^2 \varepsilon} - \alpha \Big) + \dfrac{i \alpha}{r \sqrt{\varepsilon}} \eta \cdot \Vec{e}_{\theta}^{\perp} \Argsh{\dfrac{r}{\alpha}\sqrt{\varepsilon}s} \, + \, \sigma_1(\varepsilon,s) 
\end{multline*}

\noindent with $\lvert \sigma_1(\varepsilon,s) \lvert \, \leqslant C_1 ( \sqrt{\varepsilon} s^3 + \sqrt{\varepsilon} s^2 \lvert y \lvert)$. Since $-s_0 \to - \infty$ when $\varepsilon \to 0$ and $\Big\lvert \dfrac{\alpha}{\sqrt{\varepsilon}} \Big\lvert \leqslant \dfrac{R}{2}$, we can use the asymptotic expansions of Appendix \ref{appendixB} to obtain the following Taylor expansion when $s \to - \infty$
\begin{multline*}
    \dfrac{i}{\varepsilon}  S_{-}(t;t_0,z_0) + \dfrac{i}{\varepsilon} p_{-}(t) \cdot (x - q_{-}(t)) = \dfrac{i}{\varepsilon} S_{-}^{\flat} + \dfrac{i}{\varepsilon}  S_{0}(t;t^{\flat},z^{\flat}) + \dfrac{i}{\sqrt{\varepsilon}} p_0(t) \cdot y - \dfrac{irs^2}{2} - \dfrac{i \alpha^2}{4r\varepsilon} \\ - \dfrac{i \alpha^2}{2r \varepsilon} \ln \Big( \dfrac{2r\sqrt{\varepsilon} \lvert s \lvert}{\alpha} \Big) - \dfrac{i\alpha}{r \sqrt{\varepsilon}} \eta \cdot \Vec{e}_{\theta} - is \eta \cdot \Vec{e}_{\theta}  - \dfrac{i \alpha}{r \sqrt{\varepsilon}} \eta \cdot \Vec{e}_{\theta}^{\perp} \ln \Big( \dfrac{2r\sqrt{\varepsilon} \lvert s \lvert}{\alpha} \Big) \, + \, \sigma_2(\varepsilon,s) 
\end{multline*}

\noindent with $\lvert \sigma_2(\varepsilon,s) \lvert \, \leqslant C_2 \Big( \dfrac{1}{s} + \sqrt{\varepsilon} s^3 + \sqrt{\varepsilon} s^2 \lvert y \lvert \Big)$. Using the definition \eqref{phase} of $\Lambda^{\varepsilon,\alpha}$ and \eqref{phase2} of $\Phi^{\varepsilon,\alpha}$ and noticing that $\Gamma_0 y \cdot y = \dfrac{1}{r}(\eta \cdot \Vec{e}_{\theta}^{\perp})^2$, we can identify some terms and we deduce that
\begin{multline*}
 \dfrac{i}{\varepsilon}  S_{-}(t;t_0,z_0) + \dfrac{i}{\varepsilon} p_{-}(t) \cdot (x - q_{-}(t)) = \dfrac{i}{\varepsilon} S_{-}^{\flat} + \dfrac{i}{\varepsilon} S_{0}(t;t^{\flat},z^{\flat}) + \dfrac{i}{\sqrt{\varepsilon}} p_0(t) \cdot y + \dfrac{i}{2} \Gamma_0 y \cdot y \ln(\sqrt{r} \lvert s \lvert) \\ - i \Lambda^{\varepsilon,\alpha}(s, \eta(y))  - i \Phi^{\varepsilon,\alpha}(\eta(y)) - \dfrac{i \alpha}{r \sqrt{\varepsilon}} \eta(y) \cdot \Vec{e}_{\theta} \, + \, \sigma_2(\varepsilon,s).
\end{multline*}

\noindent We now apply the asymptotic expansion of $G_{\alpha}$, provided in Lemma \ref{usefulTE} of Appendix \ref{appendixB}, which yields
\begin{equation*}
    G_{\alpha}(t^{\flat} + \sqrt{\varepsilon} s) = \Gamma_{0} \ln (\sqrt{r} \lvert s \lvert) + \Gamma_0 \ln(2\sqrt{r\varepsilon}) + \Gamma_1 + \, \sigma_3 \quad \text{with } \lvert \sigma_3 \lvert \, \leqslant \dfrac{C_3}{s}.
\end{equation*}
\noindent To conclude and obtain the desired result, we substitute in the last expression of the phase.
\item We recall the notations $S_{-}^{\flat} = S_{-}(t^{\flat};t_0,z_0)$ and $\widetilde{S}_{+}^{\flat} = \widetilde{S}_{+}(t^{\flat};t_0,z_0)$. The proof is exactly the same using Propositions \ref{prop:DLpq}, \ref{prop:DLtildez} and \ref{prop:DLaction} with $$S_{-}(t;t^{\flat},z^{\flat}) = S_{-}(t;t_0,z_0) - S_{-}^{\flat} \quad \text{and} \quad \widetilde{S}_{+}(t;t^{\flat},z^{\flat}) = \widetilde{S}_{+}(t;t_0,z_0) - \widetilde{S}_{+}^{\flat},$$ up to a sign change noting that the expansions of Appendix \ref{appendixB} are taken for $s \to + \infty$.
\end{enumerate}
\end{proof}

\noindent The following Proposition is obtained using Proposition~\ref{prop:DLPi} together with the asymptotic results presented in Appendix ~\ref{appendixA}. This is useful for studying explicitly the directions of the wave packets close to the crossing.

\begin{prop}
    \label{lem:veclimit}
    Under the same assumptions as in Lemma \ref{prop:changetime}, we have the following estimates  
    \begin{equation*}
        \Pi_{\pm,\alpha}(q_{\pm}(t^{\flat} + \sqrt{\varepsilon} s)) = \dfrac{1}{2} \Big( I_2 \pm \sgn(s) A(\vec{e}_{\theta})\Big) + \sigma(s) \quad \text{with } |\sigma(s)| \, \leqslant \frac{C}{s},
    \end{equation*}
    for $s := \dfrac{t - t^{\flat}}{\sqrt{\varepsilon}}$ large enough (i.e. $\varepsilon \to 0$ and $|t - t^{\flat}| \, \leqslant \delta$).
\end{prop}

\noindent A similar result holds for the ``drifted" classical trajectories $\widetilde{z}_{\pm}$. The previous proposition allows us to establish a connection, close to the crossing, between the direction of the wave packets that approach our solution and the vectors $\Vec{Y}^{\flat}$ and $\Vec{Y}^{\flat\perp}$, defined in Equation \eqref{eq:ICvecafter}.

\begin{corollary}
\label{coro:vec}
    Under the same assumptions as in Proposition \ref{lem:veclimit} and using the notations of Propositions \ref{prop:Vec} and \ref{prop:Vecaftercrossing}, we have the following expansions when $\varepsilon \to 0$ and $|t - t^{\flat}| \, \leqslant \delta$
    \begin{enumerate}[leftmargin=*, labelindent=0pt]
        \item If $t < t^{\flat}$, we have: $\vec{Y}_-(t) := \vec{\mathcal{V}}(q_-(t)) = \Vec{Y}^{\flat} \, + \, \mathcal{O}(\sqrt{\varepsilon} \delta^{-1} + \delta).$
        \item If $t > t^{\flat}$, we have: \[ \vec{Y}_-(t) := \vec{\mathcal{V}}(q_-(t)) = \Vec{Y}^{\flat\perp} \, + \, \mathcal{O}(\sqrt{\varepsilon} \delta^{-1} + \delta) \quad \text{and} \quad \vec{\widetilde{Y}_+}(t) := \vec{\mathcal{V}}(\widetilde{q}_+(t)) = \Vec{Y}^{\flat} \, + \, \mathcal{O}(\sqrt{\varepsilon} \delta^{-1} + \delta).\]
    \end{enumerate}
\end{corollary}

\begin{proof}[Proof of Corollary ~\ref{coro:vec}] In this proof, we use the notation introduced in Subsection \ref{subsec:studyVec}, just after Propositon \ref{prop:DLPi}, for the eigenprojectors $\Pi_{\pm,\alpha}^{\flat}$, $\Pi_{\pm,0}^{\flat,-}$ and $\Pi_{\pm,0}^{\flat,+}$.
\begin{enumerate}[leftmargin=*, labelindent=0pt]
\item For $|t - t^{\flat} | \leqslant \delta$, we have the following Taylor's expansion
\[ \vec{\mathcal{V}}(q_-(t)) := \vec{Y}_-(t) = \vec{Y}_-(t^{\flat}) + \sigma_1 \quad \text{with} \quad |\sigma_1| \leqslant C \lvert t - t^{\flat}|.\]
\noindent According to Proposition \ref{prop:Vec}, \(\vec{Y}_-(t^{\flat})\) is an eigenvector of $V(q^{\flat})$ associated with the eigenvalue $\lambda_-(q^{\flat})$. Therefore, we obtain
    \begin{align*}
    \vec{\mathcal{V}}(q_-(t))& = \Pi_{-,\alpha}^{\flat} \vec{Y}_-(t^{\flat}) + \sigma_1 \\
    & = \Big( \Pi_{-,\alpha}^{\flat} - \Pi_{-,0}^{-,\flat} \Big) \vec{Y}_-(t^{\flat}) + \Pi_{-,0}^{-,\flat} \vec{Y}_-(t^{\flat}) + \sigma_1. 
    \end{align*}
    According to the definition of $\Vec{Y}_{\flat}$ given in Equation \eqref{eq:ICvecafter}, we have 
    \[ \vec{\mathcal{V}}(q_-(t)) = \Vec{Y}_{\flat} + \Big( \Pi_{-,\alpha}^{\flat} - \Pi_{-,0}^{-,\flat} \Big) \vec{Y}_-(t^{\flat}) + \sigma_1.\]
    The desired result is given by Proposition \ref{lem:veclimit} and the value of $\Pi_{-,0}^{\flat,-}$ defined in Subsection \ref{subsec:studyVec}.
\item The second point is proven in the same way, using the value at time $t^{\flat}$ of the vectors $\vec{Y}_-$ and $\vec{\widetilde{Y}_+}$ given in Proposition \ref{prop:Vecaftercrossing} and the relations
    \begin{equation*}
        \Pi_{\pm,\alpha}^{\flat} = \Big( \Pi_{\pm,\alpha}^{\flat} - \Pi_{\pm,0}^{+,\flat} \Big) + \Pi_{\pm,0}^{+,\flat}, \quad \Pi_{-,0}^{+,\flat} = \Pi_{+,0}^{-,\flat}, \quad \Pi_{+,0}^{+,\flat} = \Pi_{-,0}^{-,\flat} \quad \text{and} \quad \Pi_{+,0}^{-,\flat} \Vec{Y}^{\flat\perp} = \Vec{Y}^{\flat\perp}. 
    \end{equation*}
\end{enumerate}
\end{proof}

\subsection{Identification of the Landau--Zener data using the ingoing wave packet}
\label{subsec:ingoingWP}

\noindent Now, we prove Theorem \ref{theo:ingoingWP} which allows us to identify the incoming wave packet in the region $\lbrace \lvert w(x) \lvert \, \leqslant \delta \rbrace$ at time $t^{\flat} - \delta$. This enables us to obtain a Landau--Zener data at time $-s_0 = -\frac{\delta}{\sqrt{\varepsilon}}$. When \( t = t^{\flat} - \delta \), using the change of time \eqref{newtime} and the relation \( \sqrt{\varepsilon}\, \delta^{-1} \ll 1 \), we study the limit \( s \to -\infty \). This proof is based on the one in~\cite[Proposition $4.6$]{FGH2021}.

\begin{proof}[Proof of Theorem ~\ref{theo:ingoingWP}]
According to the adiabatic theorem, we know that for all $t \leqslant t^{\flat} - \delta$ and all $x \in \R^d$
    \begin{equation*}
    \psi^{\varepsilon}(t,x) = \varepsilon^{- \frac{d}{4}} e^{\frac{i}{\varepsilon} S_{-}(t,t_0,z_0) + \frac{i}{\varepsilon} p_{-}(t) \cdot (x - q_{-}(t))} \vec{\mathcal{V}}_{-}(q_{-}(t)) \times u_{-}\Big(t, \dfrac{x - q_{-}(t)}{\sqrt{\varepsilon}} \Big) + \mathcal{O} \Big( (\sqrt{\varepsilon} \delta^{-1} + \varepsilon^{\frac{3}{2}} \delta^{-4})(1 + \lvert \ln \delta \lvert) \Big)
    \end{equation*}
    
\noindent in $\Sigma_{\varepsilon}^k$. Thanks to the first point of Corollary \ref{coro:vec}, when $s \to - \infty$, we have a relation between $\vec{\mathcal{V}}_{-}(q_{-})$ and $\Vec{Y}_{\flat}$. Using this relation and the fact that $\mathcal{O} \Bigg( \dfrac{1}{s^2} \Bigg) = \mathcal{O} \Bigg( \dfrac{1}{s} \Bigg) = \mathcal{O} \Big(  \sqrt{\varepsilon} \delta^{-1} \Big)$, we obtain the following estimate in $\Sigma_{\varepsilon}^k$
\vspace{-0.3cm}
\begin{multline*}
    \psi^{\varepsilon}(t,x) = \varepsilon^{- \frac{d}{4}} \text{Exp} \Bigg( \dfrac{i}{\varepsilon} S_{-}^{\flat} + \dfrac{i}{\varepsilon}  S_{0}(t,t^{\flat},z^{\flat}) + \dfrac{i}{\sqrt{\varepsilon}} p_0(t) \cdot y\Bigg) \text{Exp} \Bigg(- i \Lambda^{\varepsilon,\alpha}(s, \eta) - \dfrac{i \alpha}{r \sqrt{\varepsilon}} \eta \cdot \Vec{e}_{\theta} \\ - \dfrac{i\alpha^2}{4r \varepsilon} + \dfrac{i\alpha^2}{2r \varepsilon} \ln \Big( \dfrac{\alpha}{2\sqrt{r \varepsilon}} \Big) + \dfrac{i}{2r} (\eta \cdot \Vec{e}_{\theta})^2 + \dfrac{i\alpha}{r \sqrt{\varepsilon}} \eta \cdot \Vec{e}_{\theta}^{\perp} \ln \Big( \dfrac{\alpha}{2\sqrt{r \varepsilon}} \Big) + \dfrac{i}{2r} (\eta \cdot \Vec{e}_{\theta}^{\perp})^2 \ln \Big( \dfrac{1}{2\sqrt{r \varepsilon}} \Big) - \dfrac{i}{2} \Gamma_1 y \cdot y \Bigg) \\ \times \text{Exp} \Bigg( \dfrac{i}{2} G_{\alpha}(t^{\flat} + \sqrt{\varepsilon} s) y \cdot y \Bigg) \times u_{-}\Big(t, \dfrac{x - q_{-}(t)}{\sqrt{\varepsilon}} \Big) \times \Vec{Y}_{\flat}  + \mathcal{O} \Big( (\sqrt{\varepsilon} \delta^{-1} + \varepsilon^{\frac{3}{2}} \delta^{-4} + \delta + \varepsilon^{-1} \delta^3)(1 + \lvert \ln \delta \lvert) \Big).
\end{multline*}

\noindent Using the relation between $u_{-}$ and $u_{-}^{\text{in}, \alpha}$ (Proposition \ref{prop:ingoingprofiles}), we obtain the following estimate in $\widetilde{\Sigma}_{\varepsilon}^k$
\begin{multline*}
    \psi^{\varepsilon}(t,x) = \varepsilon^{- \frac{d}{4}} \text{Exp} \Bigg( \dfrac{i}{\varepsilon} S_{-}^{\flat} + \dfrac{i}{\varepsilon}  S_{0}(t,t^{\flat},z^{\flat}) + \dfrac{i}{\sqrt{\varepsilon}} p_0(t) \cdot y\Bigg) \text{Exp} \Bigg(- i \Lambda^{\varepsilon,\alpha}(s, \eta) - \dfrac{i \alpha}{r \sqrt{\varepsilon}} \eta \cdot \Vec{e}_{\theta} \\ - \dfrac{i\alpha^2}{4r \varepsilon} + \dfrac{i\alpha^2}{2r \varepsilon} \ln \Big( \dfrac{\alpha}{2\sqrt{r \varepsilon}} \Big) + \dfrac{i}{2r} (\eta \cdot \Vec{e}_{\theta})^2 + \dfrac{i\alpha}{r \sqrt{\varepsilon}} \eta \cdot \Vec{e}_{\theta}^{\perp} \ln \Big( \dfrac{\alpha}{2\sqrt{r \varepsilon}} \Big) + \dfrac{i}{2r} (\eta \cdot \Vec{e}_{\theta}^{\perp})^2 \ln \Big( \dfrac{1}{2\sqrt{r \varepsilon}} \Big) - \dfrac{i}{2} \Gamma_1 y \cdot y \Bigg) \\ \times \Vec{Y}_{\flat} \times u_{-}^{\text{in}, \alpha}\Big(y + y^{\varepsilon}(s) \Big) + \mathcal{O} \Big( (\sqrt{\varepsilon} \delta^{-1} + \varepsilon^{\frac{3}{2}} \delta^{-4} + \delta + \varepsilon^{-1} \delta^3)(1 + \lvert \ln \delta \lvert) \Big)
\end{multline*}

\noindent where $y^{\varepsilon}(s) = \mathcal{O} \Big( \sqrt{\varepsilon} s^2 \langle y \rangle \Big)$. Thus, as $u_{-}^{\text{in}, \alpha} \in \schwartz{\R^d}$ and $\delta \ll \dfrac{\delta^2}{\sqrt{\varepsilon}} \ll \dfrac{\delta^3}{\varepsilon}$, we identify that
\begin{multline*}
    u^{\varepsilon}(-s_0,y) = \mathrm{Exp} (-i \Lambda^{\varepsilon, \alpha} (-s_0, \eta(y))) \alpha_2^{\text{in}}(\eta(y)) \Vec{Y}_{\flat} + \, \mathcal{O} \Big( (\sqrt{\varepsilon} \delta^{-1} + \varepsilon^{\frac{3}{2}} \delta^{-4} + \varepsilon^{-1} \delta^3)(1 + \lvert \ln \delta \lvert) \Big) \, \, \text{in} \, \, \widetilde{\Sigma}_{\varepsilon}^k,
\end{multline*}

\noindent where $\alpha_2^{\text{in}}(\eta(y))$ is explicitly stated in \eqref{alphain}. This concludes the proof.
\end{proof}

\subsection{Identification of the outgoing wave packets using the Landau--Zener solution}
\label{subsec:outgoingWP}

\noindent The goal of this subsection is to present the proof of Theorem~\ref{theo:outgoingWP}. To this end, we localize the solution of Equation~\eqref{eqmodel} using a suitably chosen initial data. Then, we can compare it with $u^{\varepsilon}$, the solution of Equation~\eqref{eqUeps}, whose the initial data at time $-s_{0}$ is known thanks to Theorem~\ref{theo:ingoingWP}. We will use Proposition~\ref{prop:soleqmodel}, which contains both the Landau--Zener formula~\eqref{passage} and Equation~\eqref{LZplusin}, to determine the outgoing wave packet at time $t^{\flat} + \delta$.\\

\subsubsection{Comparison of $u^{\varepsilon}$ with a solution to a Landau-Zener model} Thanks to Proposition~\ref{prop:soleqmodel}, we can describe the solution of the Landau--Zener Equation~\eqref{eqmodel} for $\eta(y) \, + \, \frac{\alpha}{\sqrt{\varepsilon}}$ in a compact set. Therefore, we consider $w^{\varepsilon}$ solution to Equation \eqref{eqmodel} with for all $y \in \R^d$ \[ w^{\varepsilon}(-s_0,y) = \mathrm{Exp} (-i \Lambda^{\varepsilon, \alpha} (-s_0, \eta(y))) \alpha_2^{\text{in}}(\eta(y)) \Vec{Y}_{\flat}\] as initial condition (the phase $\Lambda^{\varepsilon, \alpha}$ and the function $\alpha_2^{\text{in}}$ are defined in Theorem \ref{theo:ingoingWP}) and a cut-off function $\chi_0 \in \mathcal{C}^{\infty}_{0}(\R^d, [0,1])$ such that \[ \lvert \eta (y) \lvert \, \leqslant \dfrac{R}{2} \text{ when } \dfrac{y}{R} \in \text{supp}(\chi_0).\] Thus, the function \begin{equation} \label{eq:solLZcompact}w_R^{\varepsilon} = \chi_0 \Big(\frac{y}{R} \Big) w^{\varepsilon} \end{equation} is solution to Equation \eqref{eqmodel} and has a compact support. Then, we can apply Proposition \ref{prop:soleqmodel} to it. The aim of the following proposition is to compare the solution $u^{\varepsilon}$ to \eqref{eqUeps}, using the function $u^{\varepsilon}(-s_0)$ (given in~\eqref{Uepsminus} in Theorem~\ref{theo:ingoingWP}) as initial condition, with $w_{R}^{\varepsilon}$. This is done in~\cite[Lemma $4.8$]{FGH2021} and may be reused in our framework.

\begin{prop}
\label{prop:comparaison}
Let $k \in \N$. There exists a constant $C > 0$ such that the following holds. For all $\varepsilon \in (0,1)$, $\delta \in (0,1]$, $N_0 \in \N^{*}$, $s \in [- s_0, s_0]$ and $R \geqslant 1$ such that $R^2 \sqrt{\varepsilon} \ll 1$, $R \delta \ll 1$ and $R \varepsilon^2 \delta^{-4} \ll 1$, we have
\begin{equation*}
    u^{\varepsilon}(s) = w^{\varepsilon}_{R}(s) + r^{\varepsilon}(R,\delta)
\end{equation*}
where $r^{\varepsilon}(R,\delta)$ satisfies
\begin{equation}
\label{eq:reste1}
\| r^{\varepsilon}(R,\delta) \|_{\widetilde{\Sigma}_{\varepsilon}^k} \leqslant C( \sqrt{\varepsilon} \delta^{-1} + \varepsilon^{\frac{3}{2}} \delta^{-4} + \delta + R \varepsilon^{-1} \delta^3 + R^{-N_0})(1 + \lvert \ln \delta \lvert).
\end{equation}
\end{prop}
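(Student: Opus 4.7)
The strategy is to set $w^\varepsilon := u^\varepsilon - v^\varepsilon_R$ and run a Grönwall estimate on \eqref{eqUeps} in $\widetilde\Sigma^k_\varepsilon$, in the spirit of \cite[Lemma~$4.8$]{FGH2021} while tracking the dependence in the parameter $\alpha/\sqrt\varepsilon$. Since $\chi_0(y/R)$ is a scalar cutoff commuting with matrix multiplication by $A(sr\vec{e}_\theta + \diff w(q^\flat)y + \alpha\vec{e}_\theta^\perp/\sqrt\varepsilon)$, and since \eqref{eqmodel} contains no Laplacian, the localized function $v^\varepsilon_R$ still satisfies the model equation. Subtracting it from \eqref{eqUeps} yields
$$i\partial_s w^\varepsilon = A\Big(sr\vec{e}_\theta + \diff w(q^\flat)y + \tfrac{\alpha}{\sqrt\varepsilon}\vec{e}_\theta^\perp\Big) w^\varepsilon + \sqrt\varepsilon\Big(-\tfrac{1}{2}\Delta u^\varepsilon + B^\varepsilon u^\varepsilon\Big),$$
whose leading matrix term is Hermitian (hence unitary in $L^2$), so that the error comes entirely from the initial data at $s=-s_0$ and from the source term.

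At $s=-s_0=-\delta/\sqrt\varepsilon$, I would invoke Theorem~\ref{theo:ingoingWP} to express $u^\varepsilon(-s_0)$ as $e^{-i\Lambda^{\varepsilon,\alpha}(-s_0,\eta(y))}\alpha_2^{\text{in}}(\eta(y))\vec{V}_\theta$ modulo its stated $\widetilde\Sigma^k_\varepsilon$ remainder, and then choose the initial data $v_0^\varepsilon$ of the model problem so that the asymptotics \eqref{LZminusin} produce exactly this profile, i.e. $\alpha_1^{\text{in}}=0$ with the same $\alpha_2^{\text{in}}$. Two additional remainders appear at this step: the $\mathcal{O}(R^3|s|^{-1})$ residual from the Landau--Zener asymptotics, which at $s=-s_0$ equals $\mathcal{O}(R^3\sqrt\varepsilon\delta^{-1})$ and is controlled by $\sqrt\varepsilon\delta^{-1}$ once $R^2\sqrt\varepsilon\ll 1$ is used; and the Schwartz tail $\mathcal{O}(R^{-N_0})$ produced by introducing the cutoff $\chi_0(y/R)$, which relies on the rapid decay of $\alpha_2^{\text{in}}$ and of its derivatives.

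For the propagation step, I would apply the operators $y^\alpha(\sqrt\varepsilon\partial_y)^\beta$ with $|\alpha|+|\beta|\le k$ to the difference equation; the commutators with $A(sr\vec{e}_\theta + \diff w(q^\flat)y + \cdots)$ only generate multiplications by entries of $\diff w(q^\flat)$ acting on lower-order weighted derivatives of $w^\varepsilon$, with constants uniform in $(s,\varepsilon,\alpha)$. A Grönwall argument then reduces the whole estimate to bounding
$$\sqrt\varepsilon\int_{-s_0}^{s}\Big\|\tfrac{1}{2}\Delta u^\varepsilon(\tau) - B^\varepsilon(\tau)u^\varepsilon(\tau)\Big\|_{\widetilde\Sigma^k_\varepsilon}\,\diff\tau.$$
The pointwise bounds on $B^\varepsilon$ from Proposition~\ref{prop:studyueps}, combined with the $\widetilde\Sigma^{k+2}_\varepsilon$-boundedness of $u^\varepsilon$ inherited from Proposition~\ref{prop:normSigu} (with its logarithmic loss $1+|\ln\delta|$) via the change of unknowns \eqref{newinconnue}, together with the localization $|y|\leqslant CR$ from $\chi_0$, yield a source contribution of order $\delta + R\varepsilon^{-1}\delta^3$: the $\delta$ arises from the $|y|^2$ part of $B^\varepsilon$ integrated against $s_0=\delta/\sqrt\varepsilon$ and exploits $R\delta\ll 1$, while $R\varepsilon^{-1}\delta^3$ comes from the $s^2\langle\sqrt\varepsilon y\rangle$ part integrated against $s_0^3\sim\delta^3/\varepsilon^{3/2}$. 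Adding this to the initial-data contribution reproduces exactly \eqref{eq:reste1}.

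The main technical obstacle is the uniform bookkeeping in $\alpha\in[0,\mathcal{O}(\sqrt\varepsilon)]$: the orthogonal shift $\alpha\vec{e}_\theta^\perp/\sqrt\varepsilon$ inside $A$ could a priori contaminate the commutator constants and the Grönwall exponential, and the $h_\alpha$-dependent bounds on $u^\varepsilon$ provided by Proposition~\ref{prop:normSigu} must be integrated on $[-s_0,s_0]$ without producing $\alpha$-dependent blow-up. Checking that the shift stays absorbed into an oscillating phase along $(\vec{V}_\theta,\vec{V}_\theta^\perp)$ and that every logarithmic weight integrates uniformly is the delicate point; it is precisely what makes the three smallness hypotheses $R^2\sqrt\varepsilon\ll 1$, $R\delta\ll 1$ and $R\varepsilon^2\delta^{-4}\ll 1$ necessary to close the estimates.
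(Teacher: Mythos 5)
First, a remark on the target: the paper does not actually reprove this proposition — it states that the comparison ``is done in \cite[Lemma $4.8$]{FGH2021} and may be reused in our framework'' — so your sketch has to be measured against that argument. You do reproduce its architecture correctly at the structural level (exact propagation of the cutoff through the model equation since \eqref{eqmodel} has no $y$-derivatives, identification of the data at $s=-s_0$ via Theorem~\ref{theo:ingoingWP} and the Landau--Zener asymptotics, the $\mathcal{O}(R^{-N_0})$ tail from the cutoff, the $\mathcal{O}(R^3\lvert s\lvert^{-1})$ residual absorbed using $R^2\sqrt{\varepsilon}\ll 1$).

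There is, however, a genuine gap in the energy estimate itself: you leave the whole perturbation $\sqrt{\varepsilon}\bigl(-\tfrac12\Delta+B^{\varepsilon}\bigr)$ acting on $u^{\varepsilon}$ as a source and propose to bound $\sqrt{\varepsilon}\int_{-s_0}^{s}\bigl\|\tfrac12\Delta u^{\varepsilon}-B^{\varepsilon}u^{\varepsilon}\bigr\|_{\widetilde{\Sigma}^k_{\varepsilon}}\,\diff\tau$. This does not close: in the $\widetilde{\Sigma}^k_{\varepsilon}$ scale a Laplacian costs $\varepsilon^{-1}$, so $\sqrt{\varepsilon}\|\Delta u^{\varepsilon}\|_{\widetilde{\Sigma}^k_{\varepsilon}}\lesssim \varepsilon^{-1/2}\|u^{\varepsilon}\|_{\widetilde{\Sigma}^{k+2}_{\varepsilon}}$ and the integral over an interval of length $2\delta/\sqrt{\varepsilon}$ produces $\delta\varepsilon^{-1}\gg 1$; moreover the localization $\lvert y\lvert\leqslant CR$ you invoke to control $B^{\varepsilon}$ is a property of $v^{\varepsilon}_R$, not of $u^{\varepsilon}$, and the $\widetilde{\Sigma}^{k+2}_{\varepsilon}$-bound you attribute to Proposition~\ref{prop:normSigu} does not apply ($u^{\varepsilon}$ is not a solution of the profile equation \eqref{eqProfil}). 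The decomposition must instead keep the full Hermitian Hamiltonian in the propagator for $w^{\varepsilon}=u^{\varepsilon}-v^{\varepsilon}_R$ and put the source on the \emph{explicit} function $v^{\varepsilon}_R$,
\begin{equation*}
i\partial_s w^{\varepsilon}=\Bigl[A(\cdots)+\sqrt{\varepsilon}\bigl(-\tfrac12\Delta+B^{\varepsilon}\bigr)\Bigr]w^{\varepsilon}+\sqrt{\varepsilon}\bigl(-\tfrac12\Delta+B^{\varepsilon}\bigr)v^{\varepsilon}_R,
\end{equation*}
so that the homogeneous part is unitary on $L^2$ and controlled on $\widetilde{\Sigma}^k_{\varepsilon}$ by the commutator bookkeeping, while the source is estimated using the oscillatory structure of $v^{\varepsilon}_R$: on the support of the cutoff, $\lvert\nabla_y\Lambda^{\varepsilon,\alpha}(s,\eta(y))\lvert\lesssim \lvert s\lvert+R(1+\ln\lvert s\lvert)$, which is where $\alpha/\sqrt{\varepsilon}\leqslant R$, $R\delta\ll 1$ and $R^2\sqrt{\varepsilon}\ll 1$ actually enter and where the $\delta$ and $R\varepsilon^{-1}\delta^3$ terms of \eqref{eq:reste1} come from. (Your worry about the shift $\frac{\alpha}{\sqrt{\varepsilon}}A(\Vec{e}_{\theta}^{\perp})$ contaminating the commutators is a non-issue: it is a constant Hermitian matrix and commutes with every $y^{\gamma}(\sqrt{\varepsilon}\partial_y)^{\beta}$; its only role is through the size of the phase gradient above.)
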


\subsubsection{The outgoing wave packets}

\noindent We now prove Theorems \ref{theo:outgoingWPfctu} and \ref{theo:outgoingWP}, that allows to identify the outgoing wave packets at time $t^{\flat} + \delta$. Here, we analyze the limit $s \to + \infty$.

\begin{proof}[Proof of Theorem ~\ref{theo:outgoingWPfctu}] Following Theorem \ref{theo:ingoingWP} and the construction of the function $w_R^{\varepsilon}$, given in Equation ~\eqref{eq:solLZcompact}, we can identify $\alpha_{1,R}^{\text{in}} = 0$ and $\alpha_{2,R}^{\text{in}} = \chi_0 ( \frac{y}{R} ) \alpha_{2}^{\text{in}}$, that is to say
\begin{multline*}
    \alpha_{2,R}^{\text{in}}(\eta(y)) = \text{Exp} \Bigg( \dfrac{i}{\varepsilon} S_{-}^{\flat} - \dfrac{i\alpha^2}{4r \varepsilon} + \dfrac{i \alpha^2}{2r \varepsilon} \ln\Big(\dfrac{\alpha}{2\sqrt{r\varepsilon}} \Big) + \dfrac{i}{2r} (\eta(y) \cdot \Vec{e}_{\theta})^2 + \dfrac{i \alpha}{r \sqrt{\varepsilon}} \eta(y) \cdot \Vec{e}_{\theta}^{\perp} \ln\Big(\dfrac{\alpha}{2\sqrt{r\varepsilon}} \Big)\\ + \dfrac{i}{2r} (\eta(y) \cdot \Vec{e}_{\theta}^{\perp})^2 \ln\Big(\dfrac{1}{2\sqrt{r\varepsilon}} \Big) - \dfrac{i}{2} \Gamma_1 y \cdot y \Bigg) \text{Exp} \Bigg( - \dfrac{i \alpha}{r \sqrt{\varepsilon}} \eta(y) \cdot \Vec{e}_{\theta} \Bigg) \chi_0 \Big( \frac{y}{R} \Big) \, u_{-}^{\text{in},\alpha}(y).
\end{multline*}
Then, using the transfer formula \eqref{passage}, we define the outgoing function $\begin{pmatrix}
    \alpha_{1,R}^{\text{out}} \\
    \alpha_{2,R}^{\text{out}}
\end{pmatrix}$ that appear in Equation \eqref{LZplusin} for $w_R^{\varepsilon}$. Thus, we have an expression of the function $w_R^{\varepsilon}$ when $t = t^{\flat} + \delta = t^{\flat} + s_0 \sqrt{\varepsilon}$. According to Proposition ~\ref{prop:comparaison}, we deduce that $\psi^{\varepsilon}$ satisfies \eqref{newinconnue} at $s = s_0$ with
\begin{multline*}
    u^{\varepsilon}(s_0,y) = \chi_0 \Big( \frac{y}{R} \Big) \times \Big( e^{i \Lambda^{\varepsilon,\alpha}(s_0,\eta(y))} \alpha_{1}^{\text{out}}(\eta(y)) \Vec{Y}_{\flat}^{\perp} + e^{-i \Lambda^{\varepsilon,\alpha}(s_0,\eta(y))} \alpha_{2}^{\text{out}}(\eta(y)) \Vec{Y}_{\flat} \Big)\, \\ + \, r^{\varepsilon}(R,\delta) \, + \, \mathcal{O} \Big( R^3 \sqrt{\varepsilon} \delta^{-1} (1 + \lvert \ln \delta \lvert) \Big) \text{ in } \widetilde{\Sigma}_{\varepsilon}^k,
\end{multline*}
\noindent where the quantity $r^{\varepsilon}(R,\delta)$ is introduced in \eqref{eq:reste1} and the functions $\begin{pmatrix}
    \alpha_{1}^{\text{out}} \\
    \alpha_{2}^{\text{out}}
\end{pmatrix}$ are given in \eqref{eq:relationalphainetout}.
\end{proof}

\begin{proof}[Proof of Theorem ~\ref{theo:outgoingWP}]
\noindent For the remainder of this proof, $a = \mathcal{O}(b)$ means that $\lvert a \lvert \, \leqslant C \lvert b \lvert$ for some constant $C$ that only depends on the parameters $M, r_0$, $k$ and the cut-off $\chi_0$.\\

\noindent In view of the previous theorem, we note that the functions $\alpha_{1,R}^{\text{out}}(\eta(y)) : y \mapsto \chi_0 ( \frac{y}{R} ) \alpha_1^{\text{out}}(\eta(y))$ and $\alpha_{2,R}^{\text{out}}(\eta(y)) : y \mapsto \chi_0 ( \frac{y}{R} ) \alpha_2^{\text{out}}(\eta(y))$ are compactly supported. Thus, there exists a cut-off $\chi : \R^d \longrightarrow [0,1]$ such that $\chi = 1$ on $\text{supp}(\chi_0)$ and \[ \dfrac{y}{R} \in \text{supp}(\chi) \Longrightarrow  \lvert n(y) \lvert := \Big| \diff w(q^{\flat}) y\Big| \leqslant \dfrac{R}{2}.\]

\noindent By construction, for all $y \in \R^d$, $\alpha_{1,R}^{\text{out}}(\eta(y)) = \chi \, \alpha_{1,R}^{\text{out}}(\eta(y))$ and $\alpha_{2,R}^{\text{out}}(\eta(y)) =  \chi \, \alpha_{2,R}^{\text{out}}(\eta(y))$. Thus, Theorem ~\ref{theo:outgoingWPfctu} implies that for $t = t^{\flat} + \delta$ and for all $x \in \R^d$ $$\psi^{\varepsilon}(t,x) = \chi \Big(\psi^{\varepsilon}_{+}(t,x) + \psi^{\varepsilon}_{-}(t,x) \Big) + r^{\varepsilon}(R,\delta) + \mathcal{O} \Big( R^3 \sqrt{\varepsilon} \delta^{-1} (1 + \lvert \ln \delta \lvert) \Big)$$
\noindent in $\Lp{2}{\R^d}{\C^2}$, where the remainder $r^{\varepsilon}(R,\delta)$ is defined in \eqref{eq:reste1} and with
\begin{equation}
\label{psi+}
    \psi^{\varepsilon}_{+}(t,x) = e^{\frac{i}{\varepsilon} S_0(t,t^{\flat},z^{\flat}) + \frac{i}{\varepsilon} p_0(t) \cdot (x - q_0(t))} \Big( e^{-i \Lambda^{\varepsilon,\alpha}(s_0,\eta(y))} \alpha_{2,R}^{\text{out}}(\eta(y)) \Big) \Big\lvert_{y = \frac{x - q_0(t)}{\sqrt{\varepsilon}}} \Vec{Y}_{\flat}, 
\end{equation}

\begin{equation}
\label{psi-}
    \psi^{\varepsilon}_{-}(t,x) = e^{\frac{i}{\varepsilon} S_0(t,t^{\flat},z^{\flat}) + \frac{i}{\varepsilon} p_0(t) \cdot (x - q_0(t))} \Big( e^{i \Lambda^{\varepsilon,\alpha}(s_0,\eta(y))} \alpha_{1,R}^{\text{out}}(\eta(y)) \Big) \Big\lvert_{y = \frac{x - q_0(t)}{\sqrt{\varepsilon}}} \Vec{Y}_{\flat}^{\perp}. 
\end{equation}

\noindent  It remains to see why the functions $\psi^{\varepsilon}_{\pm}$ can be approximated by wave packets associated with the classical trajectories $(\widetilde{q}_{+}, \widetilde{p}_{+})$ and $(q_{-},p_{-})$, respectively. For this, let us study the asymptotic of the phase and the profiles for $t > t^{\flat}$. According to  the second point of Lemma \ref{prop:changetime}, we have the following estimates
\begin{multline*}
\dfrac{i}{\varepsilon} S_{0}(t;t^{\flat},z^{\flat}) + \dfrac{i}{\sqrt{\varepsilon}} p_0(t) \cdot y = \dfrac{i}{\varepsilon}  S_{-}(t;t^{\flat},z^{\flat}) + \dfrac{i}{\varepsilon} p_{-}(t) \cdot (x - q_{-}(t)) - i \Lambda^{\varepsilon,\alpha}(s, \eta(y)) \\ - i \Phi^{\varepsilon,\alpha}(\eta(y)) + \dfrac{i}{2} G_{\alpha}(t^{\flat} + \sqrt{\varepsilon}s) y \cdot y + \dfrac{i \alpha}{r \sqrt{\varepsilon}} \eta(y) \cdot \Vec{e}_{\theta} + \mathcal{O} \Big( \dfrac{1}{s} \Big) + \mathcal{O} \Big( \sqrt{\varepsilon} s^3 \Big) + \mathcal{O} \Big( \sqrt{\varepsilon} s^2 \lvert y \lvert \Big),
\end{multline*}
and
\begin{multline*}
\dfrac{i}{\varepsilon} S_{0}(t;t^{\flat},z^{\flat}) + \dfrac{i}{\sqrt{\varepsilon}} p_0(t) \cdot y = \dfrac{i}{\varepsilon} \widetilde{S}_{+}(t;t^{\flat},\widetilde{z}^{\flat}) + \dfrac{i}{\varepsilon} \widetilde{p}_{+}(t) \cdot (x - \widetilde{q}_{+}(t)) - \dfrac{i}{\sqrt{\varepsilon}} \delta_{p^{\flat}} \cdot y + i \Lambda^{\varepsilon,\alpha}(s, \eta(y)) \\ + i \Phi^{\varepsilon,\alpha}(\eta(y)) - \dfrac{i}{2} G_{\alpha}(t^{\flat} + \sqrt{\varepsilon}s) y \cdot y - \dfrac{i \alpha}{r \sqrt{\varepsilon}} \eta(y) \cdot \Vec{e}_{\theta} + \mathcal{O} \Big( \dfrac{1}{s} \Big) + \mathcal{O} \Big( \sqrt{\varepsilon} s^3 \Big) + \mathcal{O} \Big( \sqrt{\varepsilon} s^2 \lvert y \lvert \Big),
\end{multline*}

\noindent for $s := \frac{t-t^{\flat}}{\sqrt{\varepsilon}}$ large and positive, where we recall that $y = \frac{x-q_0(t)}{\sqrt{\varepsilon}}$. Now, we are able to see the wave packet structure of the functions $\psi_{\pm}^{\varepsilon}$ at time $t^{\flat} + \delta$. Let us begin to study more precisely the function $\psi_{-}^{\varepsilon}$ defined in \eqref{psi-}. In view of the relation stated above, we have for $t = t^{\flat} + s_0 \sqrt{\varepsilon}$
\begin{multline*}
    \psi^{\varepsilon}_{-}(t,x) = \Vec{Y}_{\flat}^{\perp} \text{Exp} \Bigg( \dfrac{i}{\varepsilon}  S_{-}(t,t^{\flat},z^{\flat}) + \dfrac{i}{\varepsilon} p_{-}(t) \cdot (x - q_{-}(t)) \Bigg) \times \Bigg( \text{Exp} \Big( \dfrac{i}{2} G_{\alpha}(t) y \cdot y \Big) \\ \text{Exp} \Big( - \dfrac{i\alpha^2}{4r \varepsilon} + \dfrac{i \alpha^2}{2r \varepsilon} \ln \Big( \dfrac{\alpha}{2\sqrt{r \varepsilon}} \Big) + \dfrac{i}{2r} (\eta \cdot \Vec{e}_{\theta})^2 + \dfrac{i \alpha}{r \sqrt{\varepsilon}} \eta \cdot \Vec{e}_{\theta}^{\perp} \ln \Big( \dfrac{\alpha}{2\sqrt{r \varepsilon}} \Big) + \dfrac{i}{2r} (\eta \cdot \Vec{e}_{\theta}^{\perp})^2 \ln \Big( \dfrac{1}{2\sqrt{r \varepsilon}} \Big) \\ - \dfrac{i}{2} \Gamma_1 y \cdot y + \dfrac{i \alpha}{r \sqrt{\varepsilon}} \eta \cdot \Vec{e}_{\theta} \Big) \alpha_{1,R}^{\text{out}}(\eta(y)) \Bigg) \Bigg\lvert_{y = \frac{x - q_0(t)}{\sqrt{\varepsilon}}} + \mathcal{O} \Bigg( \dfrac{1}{s} + \sqrt{\varepsilon} s^3 + \sqrt{\varepsilon} s^2 \lvert y \lvert \Bigg)
\end{multline*}

\noindent in $\Lp{2}{\R^d}{\C^2}$. Thanks to the asymptotic expansion of $q_{-}$ close to $t^{\flat}$, we know $$y = \frac{x-q_{0}}{\sqrt{\varepsilon}} = \frac{x-q_{-}}{\sqrt{\varepsilon}} + \mathcal{O} \Big( \sqrt{\varepsilon} s^2 \Big).$$

\noindent Thus, using the regularity of $\alpha_{1,R}^{\text{out}}$ (given in Proposition \ref{prop:soleqmodel}), we deduce $$\alpha_{1,R}^{\text{out}}\Big( \frac{x - q_0(t)}{\sqrt{\varepsilon}} \Big) = \alpha_{1,R}^{\text{out}}\Big( \frac{x - q_-(t)}{\sqrt{\varepsilon}} \Big) + \mathcal{O} \Big( \sqrt{\varepsilon} s^2 \Big).$$

\noindent Moreover, using $\eta := \eta(y) = \eta \Big(\dfrac{x-q_{-}}{\sqrt{\varepsilon}} \Big) + \mathcal{O} \Big( \sqrt{\varepsilon} s^2 \Big)$ and the notation $y_{-} = \dfrac{x - q_{-}}{\sqrt{\varepsilon}}$, we have the following relations
\begin{align*}
    (\eta \cdot \Vec{e}_{\theta})^2 & = (\eta(y_{-}) \cdot \Vec{e}_{\theta})^2 + \mathcal{O} \Big( \sqrt{\varepsilon} s^2 \lvert y_{-} \lvert \Big) + \mathcal{O} \Big( \varepsilon s^4 \Big),\\
    \dfrac{i \alpha}{r \sqrt{\varepsilon}} \eta \cdot \Vec{e}_{\theta} & = \dfrac{i \alpha}{r \sqrt{\varepsilon}} \eta(y_-) \cdot \Vec{e}_{\theta} + \mathcal{O} \Big( \sqrt{\varepsilon} s^3 \Big),\\
    \dfrac{i \alpha}{r \sqrt{\varepsilon}} \eta \cdot \Vec{e}_{\theta}^{\perp} \ln \Big( \dfrac{\alpha}{2\sqrt{r \varepsilon}} \Big) & = \dfrac{i \alpha}{r \sqrt{\varepsilon}} \eta(y_-)  \cdot \Vec{e}_{\theta}^{\perp} \ln \Big( \dfrac{\alpha}{2\sqrt{r \varepsilon}} \Big) + \mathcal{O} \Big( \sqrt{\varepsilon} s^3 \Big).
\end{align*}

\noindent Thanks to the relation $y \cdot y = y_- \cdot y_- + \mathcal{O} \Big( \sqrt{\varepsilon} s^2 (1 + \lvert y_- \lvert) \Big) =  \mathcal{O} \Big(  \sqrt{\varepsilon} s^2 \langle y_{-} \rangle \Big)$ and the fact that $\sqrt{\varepsilon} s^2 \ll \sqrt{\varepsilon} s^3$ for $s \to + \infty$, we can identify a wave packet approximation in $\Lp{2}{\R^d}{\C^2}$ with 
\begin{equation*}
    \psi^{\varepsilon}_{-}(t,x) = e^{\frac{i}{\varepsilon}  S_{-}(t,t^{\flat},z^{\flat})} \mathrm{WP}_{\Phi_-^{t,t^{\flat}}(z^{\flat})}^{\varepsilon} \Big(f^{\varepsilon,\alpha}_{-}(x) \Big) \Vec{Y}_{\flat}^{\perp} + \mathcal{O} \Bigg( \dfrac{1}{s} + \sqrt{\varepsilon} s^3 \langle y_- \rangle \Bigg)
\end{equation*}
\noindent where for all $y \in \R^d$
\begin{equation}
\label{WPmode-}
    f^{\varepsilon,\alpha}_{-}(y) = \text{Exp} \Bigg(\dfrac{i}{2} G_{\alpha}(t^{\flat} + \sqrt{\varepsilon} s_0) y \cdot y \Bigg) \times u_{-,R}^{\text{out}}(y),
\end{equation}
\begin{multline*}
    \text{with} \quad u_{-,R}^{\text{out}}(y) = \text{Exp} \Bigg(-\dfrac{i\alpha^2}{4r \varepsilon} + \dfrac{i \alpha^2}{2r \varepsilon} \ln \Big( \dfrac{\alpha}{2\sqrt{r \varepsilon}} \Big) + \dfrac{i}{2r} (\eta(y) \cdot \Vec{e}_{\theta})^2 + \dfrac{i \alpha}{r \sqrt{\varepsilon}} \eta(y) \cdot \Vec{e}_{\theta}^{\perp} \ln \Big( \dfrac{\alpha}{2\sqrt{r \varepsilon}} \Big)\\ + \dfrac{i}{2r} (\eta(y) \cdot \Vec{e}_{\theta}^{\perp})^2 \ln \Big( \dfrac{1}{2\sqrt{r \varepsilon}} \Big) - \dfrac{i}{2} \Gamma_1 y \cdot y  + \dfrac{i \alpha}{r \sqrt{\varepsilon}} \eta(y) \cdot \Vec{e}_{\theta} \Bigg) \alpha_{1,R}^{\text{out}}(\eta(y)) \in \schwartz{\R^d}.
\end{multline*}

\noindent Since $\alpha_{1,R}^{\text{out}} = \chi_0 (\frac{y}{R}) \alpha_1^{\text{out}}$, using the relation between $\alpha_1^{\text{out}}$ and $\alpha_2^{\text{in}}$ and the explicit expression of $\alpha_2^{\text{in}}$ given in \eqref{alphain}, we obtain
\begin{multline}
\label{eq:lienu_inu_outmoins}
    u_{-,R}^{\text{out}}(y) = - \text{Exp} \Bigg( \dfrac{i}{\varepsilon} S_{-}^{\flat} \Bigg) \times \,  \text{Exp} \Bigg( -\dfrac{i\alpha^2}{2r \varepsilon} + \dfrac{i \alpha^2}{r \varepsilon} \ln \Big( \dfrac{\alpha}{2\sqrt{r \varepsilon}} \Big) + \dfrac{i}{r} (\eta(y) \cdot \Vec{e}_{\theta})^2 \\ + \dfrac{2i \alpha}{r \sqrt{\varepsilon}} \eta(y) \cdot \Vec{e}_{\theta}^{\perp} \ln \Big( \dfrac{\alpha}{2\sqrt{r \varepsilon}} \Big) + \dfrac{i}{r} (\eta(y) \cdot \Vec{e}_{\theta}^{\perp})^2 \ln \Big( \dfrac{1}{2\sqrt{r \varepsilon}} \Big) - i \Gamma_1 y \cdot y  \Bigg) \overline{b} \Big( \dfrac{\eta \cdot \Vec{e}_{\theta}}{\sqrt{r}} + \dfrac{\alpha}{\sqrt{r \varepsilon}} \Big) \chi_0 \Big(\dfrac{y}{R} \Big) \, u_{-}^{\text{in},\alpha}(y).
\end{multline}

\noindent Since the cut-off $\chi$ has a compact support, we can write that for $t \in [t^{\flat} + \frac{\delta}{2}, t^{\flat} + \delta]$ $$\chi \times \mathcal{O} \Big( \frac{1}{s} + \sqrt{\varepsilon} s^3 \langle y_- \rangle \Big) = \mathcal{O} \Big( \sqrt{\varepsilon} \delta^{-1} + R \varepsilon^{-1} \delta^3 \Big).$$

\noindent Thus, at time $t = t^{\flat} + \sqrt{\varepsilon} s_0$, $\chi \, \psi^{\varepsilon}_{-}$ can be approximated by a wave packet in $\Lp{2}{\R^d}{\C^2}$ as follows $$\chi \, \psi^{\varepsilon}_{-}(t,x) = e^{\frac{i}{\varepsilon} S_{-}(t,t^{\flat},z^{\flat})} \mathrm{WP}_{\Phi_-^{t,t^{\flat}}(z^{\flat})}^{\varepsilon} \Big(e^{\frac{i}{2} G_{\alpha}(t^{\flat} + \sqrt{\varepsilon} s_0) x \cdot x} u_{-,R}^{\text{out}}(x) \Big) \Vec{Y}_{\flat}^{\perp} + \mathcal{O} \Big( \sqrt{\varepsilon} \delta^{-1} + \varepsilon^{-1} \delta^3 \Big).$$

\noindent With regard to the plus-mode, using the expression \eqref{psi+} stated above and the same reasoning, we have for $t = t^{\flat} + s_0 \sqrt{\varepsilon}$
\begin{multline*}
    \psi^{\varepsilon}_{+}(t,x) = \Vec{Y}_{\flat} \text{Exp} \Bigg( \dfrac{i}{\varepsilon}  \widetilde{S}_{+}(t,t^{\flat},z^{\flat}) + \dfrac{i}{\varepsilon} \widetilde{p}_{+}(t) \cdot (x - \widetilde{q}_{+}(t)) \Bigg) \times \Bigg( \text{Exp} \Big( - \dfrac{i}{2} G_{\alpha}(t) y \cdot y \Big) \times \, \text{Exp} \Big(  -\dfrac{i}{\sqrt{\varepsilon}}\delta_{p^{\flat}} \cdot y + \dfrac{i\alpha^2}{4r \varepsilon} \\ - \dfrac{i \alpha^2}{2r \varepsilon} \ln \Big( \dfrac{\alpha}{2\sqrt{r \varepsilon}} \Big) - \dfrac{i}{2r} (\eta \cdot \Vec{e}_{\theta})^2 - \dfrac{i \alpha}{r \sqrt{\varepsilon}} \eta \cdot \Vec{e}_{\theta}^{\perp} \ln \Big( \dfrac{\alpha}{2\sqrt{r \varepsilon}} \Big) - \dfrac{i}{2r} (\eta \cdot \Vec{e}_{\theta}^{\perp})^2 \ln \Big( \dfrac{1}{2\sqrt{r \varepsilon}} \Big) \\ + \dfrac{i}{2} \Gamma_1 y \cdot y - \dfrac{i \alpha}{r \sqrt{\varepsilon}} \eta \cdot \Vec{e}_{\theta} \Big) \alpha_{2,R}^{\text{out}}(\eta(y)) \Bigg) \Bigg\lvert_{y = \frac{x - q_0(t)}{\sqrt{\varepsilon}}} + \mathcal{O} \Bigg( \dfrac{1}{s} + \sqrt{\varepsilon} s^3 + \sqrt{\varepsilon} s^2 \lvert y \lvert \Bigg).
\end{multline*}
\noindent We set for all $y \in \R^d$
\begin{multline*}
    u_{+,R}^{\text{out}}(y) = \text{Exp} \Big(  -\dfrac{i}{\sqrt{\varepsilon}}\delta_{p^{\flat}} \cdot y + \dfrac{i\alpha^2}{4r \varepsilon}  - \dfrac{i \alpha^2}{2r \varepsilon} \ln \Big( \dfrac{\alpha}{2\sqrt{r \varepsilon}} \Big) - \dfrac{i}{2r} (\eta \cdot \Vec{e}_{\theta})^2 \\ - \dfrac{i \alpha}{r \sqrt{\varepsilon}} \eta \cdot \Vec{e}_{\theta}^{\perp} \ln \Big( \dfrac{\alpha}{2\sqrt{r \varepsilon}} \Big) - \dfrac{i}{2r} (\eta \cdot \Vec{e}_{\theta}^{\perp})^2 \ln \Big( \dfrac{1}{2\sqrt{r \varepsilon}} \Big) + \dfrac{i}{2} \Gamma_1 y \cdot y - \dfrac{i \alpha}{r \sqrt{\varepsilon}} \eta \cdot \Vec{e}_{\theta} \Big) \alpha_{2,R}^{\text{out}}(\eta(y)),
\end{multline*}
\noindent that is to say
\begin{equation*}
    u_{+,R}^{\text{out}}(y) = \text{Exp} \Big( \dfrac{i}{\varepsilon} S_{-}^{\flat} \Big) \mathrm{Exp} \Big( -\dfrac{i}{\sqrt{\varepsilon}}\delta_{p^{\flat}} \cdot y - \dfrac{2i\alpha}{r\sqrt{\varepsilon}} \eta(y) \cdot \Vec{e}_{\theta} \Big) a \Big( \dfrac{\eta \cdot \Vec{e}_{\theta}}{\sqrt{r}} + \dfrac{\alpha}{\sqrt{r \varepsilon}} \Big) \chi_0 \Big( \dfrac{y}{R} \Big) \, u_{-}^{\text{in},\alpha}(y).
\end{equation*}
\noindent To ensure sufficient regularity of $u_{+,R}^{\text{out}}$ in the spaces $\Sigma^k$ (we want that the derivatives and momenta are uniformly bounded), the condition \eqref{eq:ourdrift} on the drift $\delta_{p^{\flat}}$ has to be satisfied (it is at this stage that the drift $\delta_{p^{\flat}}$ is determined through our computations). Using $\delta_{p^{\flat}} = - \dfrac{2 \alpha}{r} \diff \transpose{w(q^{\flat})} \Vec{e}_{\theta}$ , we have 
\begin{equation}
\label{eq:lienu_inu_outplus}
     u_{+,R}^{\text{out}}(y) = \text{Exp} \Big( \dfrac{i}{\varepsilon} S_{-}^{\flat} \Big) a \Big( \dfrac{\eta \cdot \Vec{e}_{\theta}}{\sqrt{r}} + \dfrac{\alpha}{\sqrt{r \varepsilon}} \Big) \chi_0 \Big( \dfrac{y}{R} \Big) \, u_{-}^{\text{in},\alpha}(y).
\end{equation}
\noindent Therefore, at time $t = t^{\flat} + \sqrt{\varepsilon} s_0$, for the same reasons as before, $\chi \, \psi^{\varepsilon}_{+}$ can be approximated by a wave packet in $\Lp{2}{\R^d}{\C^2}$ as follows $$\chi \, \psi^{\varepsilon}_{+}(t,x) = e^{\frac{i}{\varepsilon}  \widetilde{S}_{+}(t,t^{\flat},z^{\flat})} \mathrm{WP}_{\widetilde{\Phi}_+^{t,t^{\flat}}(\widetilde{z}^{\flat})}^{\varepsilon} \Big(f^{\varepsilon,\alpha}_{+}(x) \Big) \Vec{Y}_{\flat} + \mathcal{O} \Bigg( \sqrt{\varepsilon} \delta^{-1} + R \varepsilon^{-1} \delta^3 \Bigg)$$

\noindent with for all $y \in \R^d$
\begin{equation}
\label{WPmode+}
    f^{\varepsilon,\alpha}_{+}(y) = \text{Exp} \Bigg(- \dfrac{i}{2} G_{\alpha}(t^{\flat} + \sqrt{\varepsilon} s_0) y \cdot y \Bigg) u_{+,R}^{\text{out}}(y).
\end{equation}

\noindent According to the explicit formulas \eqref{WPmode-} and \eqref{WPmode+}, we have that
\( f^{\varepsilon,\alpha}_{\pm} = \chi_0 \, \times \, \varphi_{\pm}^{\varepsilon,\alpha} \) with
\begin{equation}
\label{eq:phiplusmoins}
\varphi_{-}^{\varepsilon,\alpha} = \text{Exp} \Bigg( \dfrac{i}{2} G_{\alpha}(t^{\flat} + \sqrt{\varepsilon} s_0) y \cdot y \Bigg) u_{-}^{\text{out},\alpha} , \, \, \varphi_{+}^{\varepsilon,\alpha} = \text{Exp} \Bigg( - \dfrac{i}{2} G_{\alpha}(t^{\flat} + \sqrt{\varepsilon} s_0) y \cdot y \Bigg) \widetilde{u}_{+}^{\text{out},\alpha}
\end{equation}
\noindent where the functions $(\widetilde{u}_{+}^{\text{out},\alpha}, u_{-}^{\text{out},\alpha})$ introduced just above are related to $(\widetilde{u}_{+,R}^{\text{out},\alpha}, u_{-,R}^{\text{out},\alpha})$ as follows
\begin{equation}
    \label{eq:lienuinoutavecsuppcompact}
    (\widetilde{u}_{+,R}^{\text{out},\alpha}, u_{-,R}^{\text{out},\alpha}) = \chi_0 \Big(\dfrac{y}{R} \Big)(\widetilde{u}_{+}^{\text{out},\alpha}, u_{-}^{\text{out},\alpha}).
\end{equation}
Since the function $\chi_0$ has a compact support and $\varphi_{\pm}^{\varepsilon,\alpha} \in \schwartz{\R^d}$, we deduce that \[ f^{\varepsilon,\alpha}_{\pm} = \varphi_{\pm}^{\varepsilon,\alpha} + \mathcal{O} ( R^{-N_0} ).\] Finally, using the fact that $\sqrt{\varepsilon} \delta^{-1} + R \varepsilon^{-1} \delta^3 = \mathcal{O} \Big( (\sqrt{\varepsilon} \delta^{-1} +R \varepsilon^{-1} \delta^3)(1 + \lvert \ln \delta \lvert)\Big)$ and the definition of $r^{\varepsilon}(R,\delta)$ given in \eqref{eq:reste1}, we obtain 
\begin{multline*}
\psi^{\varepsilon}(t,x) = e^{\frac{i}{\varepsilon}  S_{-}(t,t^{\flat},z^{\flat})} \mathrm{WP}_{\Phi_-^{t,t^{\flat}}(z^{\flat})}^{\varepsilon} \Big(\varphi^{\varepsilon,\alpha}_{-}(x) \Big) \Vec{Y}_{\flat}^{\perp} + e^{\frac{i}{\varepsilon}  \widetilde{S}_{+}(t,t^{\flat},z^{\flat})} \mathrm{WP}_{\widetilde{\Phi}_+^{t,t^{\flat}}(\widetilde{z}^{\flat})}^{\varepsilon} \Big(\varphi^{\varepsilon,\alpha}_{+}(x) \Big) \Vec{Y}_{\flat} \\ + \mathcal{O} \Big( (\sqrt{\varepsilon} \delta^{-1} + \varepsilon^{\frac{3}{2}} \delta^{-4} + \delta + R \varepsilon^{-1} \delta^3 + R^3 \sqrt{\varepsilon} \delta^{-1} + \varepsilon^{-1} \delta^3 + R^{-N_0})(1 + \lvert \ln \delta \lvert) \Big)
\end{multline*}
where the functions $\varphi^{\varepsilon,\alpha}_{\pm}$ are given in \eqref{eq:phiplusmoins}. The transition relation given in \eqref{coro:transitioninandout} is obtained directly by expressing the relations \eqref{eq:lienu_inu_outmoins} and \eqref{eq:lienu_inu_outplus} in matrix form, using \eqref{eq:lienuinoutavecsuppcompact}.
\end{proof}

\noindent It remains to prove Theorem \ref{theo:main}.

\begin{proof}[Proof of Theorem \ref{theo:main}] The adiabatic theorem and the construction of the parameters of the wave packets for $t > t^{\flat}$ (Proposition \ref{prop:Vecaftercrossing} for the vectors, Proposition \ref{prop:outgoingprofiles} for the profiles) allows us to propagate the solution for $t \geqslant t^{\flat} + \delta$ with the initial condition at time $t^{\flat} + \delta$ given by \eqref{outgoingWP}. Then
    \begin{equation*}
         \psi^{\varepsilon}(t) = \Vec{Y}_{-}(t) v_{-}^{\varepsilon}(t) + \Vec{\widetilde{Y}_{+}}(t) \widetilde{v}_{+}^{\varepsilon}(t) \, + \, \mathcal{O} \Big( (\sqrt{\varepsilon} \delta^{-1} + \varepsilon^{\frac{3}{2}} \delta^{-4} + \delta + R \varepsilon^{-1} \delta^3 + R^3 \sqrt{\varepsilon} \delta^{-1} + \varepsilon^{-1} \delta^3 + R^{-N_0})(1 + \lvert \ln \delta \lvert) \Big) 
    \end{equation*}
where all the parameters of waves packets are defined as in Theorem \ref{theo:main}. To conclude and obtain the remaining part stated in Theorem \ref{theo:outgoingWP}, it suffices to choose $N_0$ and the parameters $\delta$ and $R$ as suitable functions of $\varepsilon$. First, we write $\delta = \varepsilon^{\beta_1}$ with $\beta_1 > 0$. In order to have $\varepsilon^{\frac{3}{2}} \delta^{-4} \ll 1$ and $\varepsilon^{-1} \delta^3 \ll 1$, we need $\beta_1 \in ( \frac{1}{3}, \frac{3}{8} )$. An optimal choice, that is, having $\varepsilon^{\frac{3}{2}} \delta^{-4} = \varepsilon^{-1} \delta^3$, is to take $\beta_1 = \dfrac{5}{14}$. We must then choose $R$ so that the assumptions of Proposition \ref{prop:comparaison} are satisfied, and such that the $R$-dependent remainder terms are bounded by $\varepsilon^{\frac{1}{14}}$. We write $R = \varepsilon^{-\beta_2}$ with $\beta_2 > 0$. Since $\varepsilon^{-1} \delta^3 = \varepsilon^{\frac{1}{14}}$, we need $\beta_2 < \dfrac{1}{14}$. Then, we require that $N_0$ be chosen sufficiently large such that $R^{-N_0} = \mathcal{O} \big( \varepsilon^{\frac{1}{14}} \big)$, $R^3 \sqrt{\varepsilon} \delta^{-1} = \mathcal{O} \big( \varepsilon^{\frac{1}{14}} \big)$. This can be achieved if $N_0 \geqslant \dfrac{1}{14\beta_2}$ and $\beta_2 < \dfrac{1}{42}$. We are thus left with an approximation of order $\mathcal{O} \big( \varepsilon^{\frac{1}{14} - \beta} \big)$ where $\beta \in (0, \frac{1}{42} )$. Moreover, we have $\alpha_0 \leqslant C \varepsilon^{\frac{1}{2} - \beta}.$
\end{proof}

%%%%%%%%%%%%%%%%%%%%%%%%%%%%%%%%%%%%%%%%%%%%%%%%%%%%%%%%%%%%
\appendix

\section{Notations and computations}

\subsection{Notations}
\label{appendixA}

\noindent This appendix summarizes all the notations used throughout the document.\\

\noindent $\bullet$ For $\alpha = (\alpha_1, \cdots, \alpha_d) \in \N^d$ a multi-index, $\lvert \alpha \lvert$ is its length, defined by: $\lvert \alpha \lvert \, = \, \alpha_1 + \cdots + \alpha_d$. For $x = (x_1, \cdots, x_d) \in \R^d$, we write $x^{\alpha}$ for $x_1^{\alpha_1} \cdots x_d^{\alpha_d}$.

\noindent $\bullet$ $\partial_t$ is the partial derivative with the respect to time, $\partial_x^{\alpha}$ the partial derivative with the respect to space of order $\alpha$. We denote $\partial_i$ or $\partial_{x_i}$ the partial derivative with the respect to the $i$-th space coordinate. $\Delta$ correspond to the Laplacien, $\nabla$ to the gradient and $\text{Hess}$ to the Hessian matrix. We will use the notation $D = \dfrac{1}{i} \nabla$. 

\noindent $\bullet$ For $x \in \R^d$, $\langle x \rangle = \Big( 1 + \lvert x \lvert^2 \Big)^{\frac{1}{2}}$.

\noindent $\bullet$ We use the notation $a\ll b$ to mean that $a = f(\varepsilon)b$ and $\lim\limits_{\varepsilon \to 0} f(\varepsilon) = 0.$

\noindent $\bullet$ We denote $\schwartz{\R^d} := \mathscr{S}(\R^d,\C)$ the Schwartz space which is the functional space of all smooth functions whose derivatives are rapidly deacreasing. 

\noindent $\bullet$ For $E$ a finite-dimensional normed vector space, we denote $\norm{\cdot}{E}$ any norm in the space $E$.

\noindent $\bullet$ For $\xi \in \R^d$, $\xi \cdot \nabla$ denotes the operator $\xi \cdot \nabla = \sum\limits_{i=1}^{d} \xi_{i} \partial_{i}$.

\noindent $\bullet$ For a vector $V = \begin{pmatrix}
    v_1 \\
    v_2
\end{pmatrix} \in \R^2$, we associate the vector $V^{\bot} = \begin{pmatrix}
    - v_2 \\
    v_1
\end{pmatrix}$. If $U = \begin{pmatrix}
    u_1 \\
    u_2
\end{pmatrix} \in \R^2$, we set $U \wedge V = U^{\bot} \cdot V = u_1 v_2 - u_2 v_1$ and $U \otimes V = \begin{pmatrix}
    u_1 v_1 & u_1 v_2 \\
    u_2 v_1 & u_2 v_2
\end{pmatrix} \in \R^{2 \times 2}$.

\noindent $\bullet$ For $\phi \in \R$, we denote $\Vec{e}_{\phi}$ the vector of $\R^2$ defined by $\Vec{e}_{\phi} = \begin{pmatrix}
    \cos(\phi) \\
    \sin(\phi)
\end{pmatrix}.$

\noindent $\bullet$ We associate to the vector $w = \begin{pmatrix}
    w_1 \\
    w_2
\end{pmatrix} \in \R^2$ the matrix $A(w) = \begin{pmatrix}
    w_1 & w_2 \\
    w_2 & - w_1
\end{pmatrix} \in \R^{2 \times 2}$.

\noindent $\bullet$ Let $(n,m)$ be in $\N^2$. For any matrix $M$ belonging to $\R^{n \times m}$, the matrix $\transpose{M}$ is the transpose of the matrix $M$ and belongs to $\R^{m \times n}$. For all $n \in \N$, we denote $I_n$ the identity matrix of $\R^{n \times n}$.

\noindent $\bullet$ We recall the Gamma function is defined for $z \in \C$ by
\begin{equation}
\label{eq:Gammafunction}
    \Gamma(z) = \ds\int_0^{+ \infty} t^{z-1} e^{-t} \diff t. \tag{A1}
\end{equation}

\subsection{Computations}
\label{appendixB}

\subsubsection{Antiderivatives}

\noindent In the construction of the profiles of the wave packets, we need to control certain integrals, independently of the parameter $\alpha$. To this end, several integrals are computed in the proofs presented in Section \ref{sec:profiles}. Here is a summary of all the useful antiderivatives.

\begin{lemma} Let $C$ be a real number. We have the following antiderivatives
\begin{enumerate}
\item $\ds\int \dfrac{\diff u}{\sqrt{1 + u^2}} = \argsh{x} + C$,
\item $\ds\int \dfrac{u}{\sqrt{1 + u^2}} \, \diff u = \sqrt{1 + x^2} - 1 + C$,
\item $\ds\int \dfrac{ \diff u}{(1 + u^2)^{\frac{3}{2}}} =  \dfrac{x}{\sqrt{1 + x^2}} + C$,
\item $\ds\int \dfrac{u}{(1 + u^2)^{\frac{3}{2}}} \, \diff u = - \dfrac{1}{\sqrt{1 + x^2}} + C$,
\item $\ds\int \sqrt{1 + u^2} \, \diff u = \dfrac{1}{2} \Big( x \sqrt{1 + x^2} + \argsh{x} \Big) + C$,
\item $\ds\int \argsh{u} \, \diff u = x \argsh{x} - \sqrt{1 + x^2} - 1 + C$,
\item $\ds\int \ln \Big( u + \sqrt{1 + u^2} \Big) \diff u = x \ln \Big( x + \sqrt{1 + x^2} \Big) - \sqrt{1 + x^2} + C.$

\end{enumerate}
\end{lemma}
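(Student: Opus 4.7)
The plan is to verify the lemma by direct computation, since all seven identities are statements about explicit antiderivatives and the claim reduces to checking that the derivative of each right-hand side equals the corresponding integrand. There is no semiclassical or asymptotic content here — the lemma merely collects the primitives that are used, with specific normalization constants, in the integral bounds of Section~\ref{sec:profiles} (the integrals $I_1,\ldots,I_5$ in the proof of Proposition~\ref{prop:Malpha}, the computation of $I^\alpha$ in Lemma~\ref{lem:halpha}, and the identification of $G_\alpha$ in Lemma~\ref{lem:galpha}).

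For identities (1)--(4) I would simply differentiate the right-hand sides. Using $\operatorname{argsh}'(x) = 1/\sqrt{1+x^2}$ (which follows from $\operatorname{argsh}(x) = \ln(x+\sqrt{1+x^2})$ and the chain rule), (1) is immediate. For (2), $(\sqrt{1+x^2})' = x/\sqrt{1+x^2}$. For (4), $\bigl(-(1+x^2)^{-1/2}\bigr)' = x(1+x^2)^{-3/2}$. For (3), the quotient rule gives $\bigl(x/\sqrt{1+x^2}\bigr)' = 1/(1+x^2)^{3/2}$. The additive $-1$ appearing in (2) is only a choice of the arbitrary constant $C$, and is harmless.

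For identities (5)--(7) I would use integration by parts together with the earlier formulas. For (5), write $\sqrt{1+u^2} = (1+u^2)/\sqrt{1+u^2}$, split it as $1/\sqrt{1+u^2} + u \cdot u/\sqrt{1+u^2}$, apply (1) to the first term, and integrate the second by parts with $f(u)=u$ and $g'(u) = u/\sqrt{1+u^2}$ (so $g(u)=\sqrt{1+u^2}$ by (2)); rearranging yields $\int \sqrt{1+u^2}\,du = \tfrac{1}{2}\bigl(x\sqrt{1+x^2} + \operatorname{argsh}(x)\bigr) + C$. For (6), take $f(u) = \operatorname{argsh}(u)$, $g'(u) = 1$, so that $\int \operatorname{argsh}(u)\,du = u\operatorname{argsh}(u) - \int u/\sqrt{1+u^2}\,du$, and conclude with (2) (the constants $-1$ absorb into $C$). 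Identity (7) is then literally (6) after replacing $\operatorname{argsh}(x)$ by its explicit expression $\ln(x+\sqrt{1+x^2})$.

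No genuine obstacle is expected; the only point that needs a quick sanity check is the bookkeeping of the additive constants (the $-1$ in (2), (5), (6)), which I would handle once by noting that the lemma is stated up to an arbitrary $C$ and then leave implicit for the rest. As a double-check I would verify each identity at a single convenient value (e.g.\ $x=0$) to ensure the normalization is correctly chosen for the subsequent applications in the proofs of Propositions~\ref{prop:DLpq}, \ref{prop:DLaction}, and Lemma~\ref{lem:galpha}.
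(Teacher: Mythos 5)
Your proposal is correct: differentiating the right-hand sides verifies (1)--(4), and integration by parts combined with those identities gives (5)--(7), with the additive constants ($-1$ in (2), (5), (6)) absorbed into $C$. The paper itself states this lemma without proof, as a mere summary of standard primitives used in Section~\ref{sec:profiles} and Appendix~\ref{appendixB}, so your direct verification is exactly the intended (and only reasonable) argument.
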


\subsubsection{Asymptotic expansions}

\noindent First, we recall the Taylor expansion of the function $u \mapsto \dfrac{1}{\sqrt{1+u}}$.

\begin{lemma}
\label{lem:estim1}
    There exists $C > 0$ such that for all $u \in \, \Bigg[-\dfrac{1}{2},\dfrac{1}{2} \Bigg]$, \[ \dfrac{1}{\sqrt{1+u}} = 1 + r \quad \text{with } \lvert r \lvert \, \leqslant C \lvert u \lvert .\]
\end{lemma}

\noindent When we study the passage of wave packets in the gap region (Section \ref{sec:passage}), a time rescaling is introduced (see \eqref{newtime}). Therefore, essentially in Subsection \ref{subsec:newscale}, we require the use of asymptotic expansions when $s \to \pm \infty$. The following lemma summarizes those that have been used.

\begin{lemma}
\label{usefulTE}
Let $\varepsilon \in (0,1)$, $\alpha \geqslant 0$ and $R \geqslant 1$. We assume that $\alpha \leqslant \dfrac{R}{2} \sqrt{\varepsilon}$.

\noindent We have the following Taylor expansion when $s \longrightarrow \pm \infty$
    \begin{enumerate}
    \item $\dfrac{1}{\sqrt{\alpha^2 + r^2 s^2\varepsilon }} = \dfrac{1}{r \sqrt{\varepsilon} \lvert s \lvert} + \mathcal{O} \Bigg( \dfrac{R^2}{s^3} \Bigg),$
    \item $\dfrac{is}{2\sqrt{\varepsilon}} \sqrt{\alpha^2 + r^2 s^2 \varepsilon} = i \, \sgn (s) \Bigg( \dfrac{s^2 r}{2} + \dfrac{\alpha^2}{4r \varepsilon} \Bigg) + \mathcal{O} \Bigg( \dfrac{R^4}{s^2} \Bigg),$
    \item $\dfrac{i}{r \sqrt{\varepsilon}} \eta \cdot \Vec{e}_{\theta} \sqrt{\alpha^2 + r^2 s^2 \varepsilon} = \sgn (s) \times i \eta \cdot \Vec{e}_{\theta} s + \mathcal{O} \Bigg( \dfrac{R^2}{s} \Bigg),$
    \item $\dfrac{i \alpha^2}{2r \varepsilon} \Argsh{\dfrac{r}{\alpha}\sqrt{\varepsilon}s} = \sgn (s) \dfrac{i \alpha^2}{2r \varepsilon} \ln \Big( \dfrac{2r \sqrt{\varepsilon} \lvert s \lvert}{\alpha} \Big) + \mathcal{O} \Bigg( \dfrac{R^4}{s^2} \Bigg),$
    \item $\dfrac{i \alpha}{r \sqrt{\varepsilon}} \eta \cdot \Vec{e}_{\theta}^{\perp} \Argsh{\dfrac{r}{\alpha}\sqrt{\varepsilon}s} = \sgn (s) \dfrac{i \alpha}{r \sqrt{\varepsilon}} \eta \cdot \Vec{e}_{\theta}^{\perp} \ln \Big( \dfrac{2r \sqrt{\varepsilon} \lvert s \lvert}{\alpha} \Big) + \mathcal{O} \Bigg( \dfrac{R^3}{s^2} \Bigg),$
    \item $h_{\alpha}(t^{\flat} + \sqrt{\varepsilon} s) = \ln (2 r \sqrt{\varepsilon} \lvert s \lvert) + \mathcal{O} \Bigg( \dfrac{R^2}{s} \Bigg)$ where $h_{\alpha}$ is defined in \eqref{eq:h_alpha},  \item $G_{\alpha}(t^{\flat} + \sqrt{\varepsilon} s) = \Gamma_0 \ln\Big( 2r\sqrt{\varepsilon} \lvert s \lvert \Big) + \Gamma_1 + \mathcal{O} \Bigg( \dfrac{R^2}{s} \Bigg)$ where the matrix $\Gamma_1$ is given in \eqref{matGamma1} and the function $G_{\alpha}$ is defined in \eqref{def:phase}.
\end{enumerate} 
\end{lemma}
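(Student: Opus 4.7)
\medskip

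\noindent\textbf{Proof plan for Lemma \ref{usefulTE}.} All six expansions rest on a single algebraic observation: under the hypothesis $\alpha/\sqrt{\varepsilon}\leqslant R$, the small parameter
\[
x := \frac{\alpha^{2}}{r^{2}\varepsilon s^{2}}\quad\text{satisfies}\quad 0\leqslant x \leqslant \frac{R^{2}}{r^{2}s^{2}},
\]
so $x\to 0$ as $s\to\pm\infty$. The plan is to factor $\sqrt{\alpha^{2}+r^{2}s^{2}\varepsilon}=r|s|\sqrt{\varepsilon}\sqrt{1+x}$ in items (1)--(2), (5)--(6), and to rewrite the argument of $\mathrm{Argsh}$ in items (3)--(4) so that the standard asymptotic $\mathrm{argsh}(X)=\sgn(X)\ln(2|X|)+\mathcal{O}(X^{-2})$ as $|X|\to\infty$ applies with $X=r\sqrt{\varepsilon}s/\alpha$ (whose inverse square is precisely $x$, and is therefore small in a quantifiable way).

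\medskip

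\noindent For \textbf{(1)} I would apply $\sqrt{1+x}=1+x/2+\mathcal{O}(x^{2})$ to get
\[
\sqrt{\alpha^{2}+r^{2}s^{2}\varepsilon}=r|s|\sqrt{\varepsilon}+\frac{\alpha^{2}}{2r|s|\sqrt{\varepsilon}}+\mathcal{O}\!\left(\frac{\alpha^{4}}{r^{3}|s|^{3}\varepsilon^{3/2}}\right),
\]
multiply by $is/(2\sqrt{\varepsilon})$, use $s/|s|=\sgn(s)$, and control the remainder via $\alpha^{4}/\varepsilon^{2}\leqslant R^{4}$ to obtain the desired $\mathcal{O}(R^{4}/s^{2})$. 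Item \textbf{(2)} is the same expansion multiplied by $i\eta\cdot\vec{e}_{\theta}/(r\sqrt{\varepsilon})$; the leading term gives $\sgn(s)\,i\eta\cdot\vec{e}_{\theta}\,s$ and the next term is of order $\alpha^{2}/(r^{2}\varepsilon |s|)\leqslant R^{2}/(r^{2}|s|)$.

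\medskip

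\noindent For \textbf{(3)}--\textbf{(4)} the key is that, even though $X=r\sqrt{\varepsilon}s/\alpha$ may blow up when $\alpha$ is small, the product $\alpha^{k}\varepsilon^{-k/2}\cdot X^{-2}=\alpha^{k-2}\varepsilon^{1-k/2}/(r^{2}s^{2})$ is controlled by $R^{k+2}/s^{2}$ (taking $k=2$ for (3) and $k=1$ for (4)). I would just expand $\mathrm{argsh}(X)=\sgn(X)\ln(|X|+\sqrt{1+X^{2}})=\sgn(X)\ln(2|X|)+\mathcal{O}(X^{-2})$ and track these weights. For \textbf{(5)}, factoring out $r\sqrt{\varepsilon}|s|$ inside the logarithm gives
\[
h_{\alpha}(t^{\flat}+\sqrt{\varepsilon}s)=\ln(r\sqrt{\varepsilon}|s|)+\ln\!\Big(1+\sqrt{1+x}\Big)=\ln(2r\sqrt{\varepsilon}|s|)+\mathcal{O}(x),
\]
and $\mathcal{O}(x)=\mathcal{O}(R^{2}/s^{2})\subset\mathcal{O}(R^{2}/s)$.

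\medskip

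\noindent Finally \textbf{(6)} follows immediately by substituting (5) into the closed-form expression
\[
G_{\alpha}(t)=\Gamma_{0}h_{\alpha}(t)+\frac{r|t-t^{\flat}|}{\sqrt{\alpha^{2}+r^{2}(t-t^{\flat})^{2}}}\Gamma_{1}
\]
obtained in the proof of Lemma~\ref{lem:galpha}, and noting that the coefficient of $\Gamma_{1}$ equals $(1+x)^{-1/2}=1+\mathcal{O}(x)=1+\mathcal{O}(R^{2}/s^{2})$. I expect no real obstacle: the only subtlety is bookkeeping the $R$-dependence of each remainder so that the factors $\alpha/\sqrt{\varepsilon}$ and $\alpha^{2}/\varepsilon$ that appear as prefactors in the statement are absorbed into the correct power of $R$.
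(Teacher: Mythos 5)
The paper states Lemma \ref{usefulTE} without proof (it appears in Appendix \ref{appendixB} only as a summary of expansions used in Subsection \ref{subsec:newscale}), so there is no argument of the author's to compare against; your proposal supplies a correct and complete proof along the natural route, namely factoring out $r|s|\sqrt{\varepsilon}$ and expanding in $x=\alpha^{2}/(r^{2}\varepsilon s^{2})\leqslant R^{2}/(r^{2}s^{2})$, together with $\mathrm{argsh}(X)=\sgn(X)\ln(2|X|)+\mathcal{O}(X^{-2})$ for items (3)--(4) and the closed form $G_{\alpha}=\Gamma_{0}h_{\alpha}+\bigl(1+x\bigr)^{-1/2}\Gamma_{1}$ from Lemma~\ref{lem:galpha} for item (6). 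All the remainder bounds check out: for (1) the error is $\alpha^{4}/(2r^{3}\varepsilon^{2}s^{2})\leqslant R^{4}/(2r^{3}s^{2})$, and for (3)--(4) the prefactor times $X^{-2}=\alpha^{2}/(r^{2}\varepsilon s^{2})$ gives $(\alpha/\sqrt{\varepsilon})^{k+2}/(r^{2}s^{2})\leqslant R^{k+2}/s^{2}$ as you claim. One inconsequential slip: your intermediate identity ``$\alpha^{k}\varepsilon^{-k/2}\cdot X^{-2}=\alpha^{k-2}\varepsilon^{1-k/2}/(r^{2}s^{2})$'' has the wrong exponents (it should read $\alpha^{k+2}\varepsilon^{-k/2-1}/(r^{2}s^{2})$), but the bound $R^{k+2}/s^{2}$ you draw from it is the correct one, so nothing downstream is affected.
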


\subsection{The Landau--Zener problem} In this subsection, we provide some computational details to explain how the system \eqref{eqmodel} can be turn into a Landau--Zener problem of the form \eqref{eqLZ}. This is the goal of the following lemma.

\begin{lemma}
\label{lem:LZproblem}
    The problem \eqref{eqmodel} can be turned into the Landau--Zener problem \eqref{eqLZ} with $z := z(\eta)$ and $\eta$ a parameter of $\C^2$
    \begin{equation*}
    \left\{
        \begin{array}{r c l l}
        \dfrac{1}{i} \partial_s u_{\text{LZ}}^{\varepsilon}(s,\eta) & = & \begin{pmatrix}
            s + z_1(\eta) & z_2(\eta) \\
            z_2(\eta) & - s - z_1(\eta)
        \end{pmatrix} u_{\text{LZ}}^{\varepsilon}(s,\eta) & \text{for all} \, \, \, (s,\eta) \in \R \times \R^2\\
        u_{\text{LZ}}^{\varepsilon}(0,\eta) & = & \mathcal{R}(\phi)^{-1} v_0^{\varepsilon}(\sqrt{r} \eta) &  \text{for all} \, \, \, \eta \in \R^2,
        \end{array}
    \right.
\end{equation*}

\noindent with $\phi \in \R$ such that $\Vec{e}_{\phi} = - \Vec{e}_{\theta}$, $\mathcal{R}(\phi)$ is the rotation matrix defined in \eqref{eqmatrot} and $v_0^{\varepsilon}$ the initial condition of the system \eqref{eqmodel}.
\end{lemma}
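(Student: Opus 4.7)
The strategy is a two-step reduction: first eliminate the parameter $r$ by a joint rescaling in $s$ and $\eta$, then diagonalize the coefficient of $s$ by the constant unitary $\mathcal{R}(\phi)^{-1}$ to bring the matrix into the standard Landau--Zener form. Both steps are essentially algebraic, and the linearity of $w \mapsto A(w)$ together with the orthonormality of $(\Vec{e}_\theta, \Vec{e}_\theta^\perp)$ does most of the work.

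\emph{Step 1 (rescaling).} I would introduce the intermediate unknown $w(s,\eta) := v^{\varepsilon}(s/\sqrt{r}, \sqrt{r}\,\eta)$. The chain rule gives $i \partial_s w = (1/\sqrt{r})(i\partial_s v^{\varepsilon})$ at the rescaled arguments, and plugging into \eqref{eqmodel} together with $A(\lambda u + \mu v) = \lambda A(u) + \mu A(v)$ yields
\begin{equation*}
    i \partial_s w = \Big( s\, A(\Vec{e}_\theta) + A(\eta) + \tfrac{\alpha}{\sqrt{r\varepsilon}}\, A(\Vec{e}_\theta^\perp) \Big) w.
\end{equation*}
All factors of $\sqrt{r}$ have been absorbed into the $s$- and $\eta$-variables.

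\emph{Step 2 (decomposition and rotation).} Decomposing $\eta$ in the orthonormal basis $(\Vec{e}_\theta, \Vec{e}_\theta^\perp)$ as $\eta = (\eta\cdot\Vec{e}_\theta)\Vec{e}_\theta + (\eta\cdot\Vec{e}_\theta^\perp)\Vec{e}_\theta^\perp$ and using linearity of $A$ again, I can group the right-hand side as
\begin{equation*}
    \bigl( s + \eta\cdot\Vec{e}_\theta \bigr) A(\Vec{e}_\theta) + \Bigl( \eta\cdot\Vec{e}_\theta^\perp + \tfrac{\alpha}{\sqrt{r\varepsilon}} \Bigr) A(\Vec{e}_\theta^\perp),
\end{equation*}
which suggests the definition $z(\eta) = (z_1(\eta), z_2(\eta)) := \bigl( \eta\cdot\Vec{e}_\theta,\; \eta\cdot\Vec{e}_\theta^\perp + \alpha/\sqrt{r\varepsilon} \bigr)$. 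Setting $u_{\text{LZ}}^\varepsilon(s,\eta) := \mathcal{R}(\phi)^{-1} w(s,\eta)$, the equation transforms into
\begin{equation*}
    i \partial_s u_{\text{LZ}}^\varepsilon = \bigl( (s+z_1)\, M_\theta + z_2\, M_\theta^\perp \bigr) u_{\text{LZ}}^\varepsilon,
\end{equation*}
with $M_\theta := \mathcal{R}(\phi)^{-1} A(\Vec{e}_\theta) \mathcal{R}(\phi)$ and $M_\theta^\perp := \mathcal{R}(\phi)^{-1} A(\Vec{e}_\theta^\perp) \mathcal{R}(\phi)$.

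\emph{Step 3 (the algebraic identity).} The remaining point is the explicit computation of $M_\theta$ and $M_\theta^\perp$ for the specific choice $\Vec{e}_\phi = -\Vec{e}_\theta$. A direct trigonometric computation (already used in the proof of Proposition~\ref{prop:soleqmodel}) gives $M_\theta = \mathrm{diag}(-1,1)$ and, by the same half-angle identities, $M_\theta^\perp = \bigl(\begin{smallmatrix} 0 & -1 \\ -1 & 0 \end{smallmatrix}\bigr)$, so that $(s+z_1)M_\theta + z_2 M_\theta^\perp$ matches exactly $-\bigl(\begin{smallmatrix} s+z_1 & z_2 \\ z_2 & -s-z_1 \end{smallmatrix}\bigr)$. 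This is equivalent to writing $\frac{1}{i}\partial_s u_{\text{LZ}}^\varepsilon$ in the form announced in \eqref{eqLZ}. The initial condition follows from direct evaluation at $s=0$.

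The only technical obstacle is bookkeeping the signs and half-angles in Step~3; everything else is a chain rule and the linearity of $A$. No analytical estimates are needed at this stage since the transformation is purely algebraic.
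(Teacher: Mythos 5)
Your proposal is correct and follows essentially the same route as the paper: rescale $(s,\eta)\mapsto(s/\sqrt{r},\sqrt{r}\eta)$ and use the linearity of $w\mapsto A(w)$ to absorb $\sqrt{r}$, then conjugate by $\mathcal{R}(\phi)^{-1}$ and use $\mathcal{R}(\phi)^{-1}A(\Vec{e}_{\theta})\mathcal{R}(\phi)=\mathrm{diag}(-1,1)$ together with the analogous identity for $\Vec{e}_{\theta}^{\perp}$ to reach the Landau--Zener form. The only difference is presentational: you make explicit the two conjugation identities that the paper's single chain of equalities leaves implicit, and your sign bookkeeping ($\tfrac{1}{i}\partial_s$ versus $i\partial_s$) checks out.
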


\begin{proof}[Proof of Lemma ~\ref{lem:LZproblem}]
\noindent Computations give
    \begin{align*}
        \dfrac{1}{i} \partial_s u_{\text{LZ}}^{\varepsilon}(s, \eta) & = \dfrac{1}{i \sqrt{r}} \mathcal{R}(\phi)^{-1} \partial_s v^\varepsilon \Big( \frac{s}{\sqrt{r}} , \sqrt{r} \eta \Big)\\
        & = - \dfrac{1}{\sqrt{r}} \mathcal{R}(\phi)^{-1} A \Big( s\sqrt{r} \Vec{e}_{\theta} + \eta \sqrt{r}  + \dfrac{\alpha \Vec{e}_{\theta}^{\perp}}{\sqrt{\varepsilon}} \Big) v^\varepsilon \Big( \frac{s}{\sqrt{r}} , \sqrt{r} \eta \Big)\\
        & = \begin{pmatrix}
            s + \Vec{e}_{\theta} \cdot \eta & \Vec{e}_{\theta}^{\perp} \cdot \eta + \dfrac{\alpha}{\sqrt{r \varepsilon}} \\
            \Vec{e}_{\theta}^{\perp} \cdot \eta + \dfrac{\alpha}{\sqrt{r \varepsilon}} & - s - \Vec{e}_{\theta} \cdot \eta
        \end{pmatrix} \mathcal{R}(\phi)^{-1} v^\varepsilon \Big( \frac{s}{\sqrt{r}} , \sqrt{r} \eta \Big)\\
        & = \begin{pmatrix}
            s + z_1(\eta) & z_2(\eta) \\
            z_2(\eta) & -s - z_1(\eta) 
        \end{pmatrix} u_{\text{LZ}}^{\varepsilon}(s, \eta)
        \end{align*}
        
\noindent with $z_1(\eta) = \Vec{e}_{\theta} \cdot \eta$ and $z_2(\eta) = \Vec{e}_{\theta}^{\perp} \cdot \eta + \dfrac{\alpha}{\sqrt{r \varepsilon}}$ and the lemma is established.
\end{proof}

\section{Transfer formula for the other mode}
\label{appendix:autremode}

\noindent In this appendix, we outline the necessary adjustments to certain steps of the proof that would be required if the initial wave packet were assumed to be on the plus-mode. Then, in the following, we consider \( z_0 \in \mathcal{A}_{+}(M,\alpha_0,r_0,\delta,\mathrm{I})\) and we assume that the initial data $\psi_0^{\varepsilon}$ of the system \eqref{equation} is given for all $x \in \R^d$ by
\begin{equation}
\label{IDannexe}
    \psi_{0}^{\varepsilon}(x) = \Vec{Y}_{1} \, \mathrm{WP}_{z_0}^{\varepsilon} \varphi(x)
    \tag{B1}
\end{equation}
where $\mathrm{WP}_{z_0}^{\varepsilon}$ is a wave packet associated with the point $z_0 \in \R^{2d} \setminus \Upsilon$, a function $\varphi \in \schwartz{\R^d}$ and $\Vec{Y}_{1} \in \R^2$ a normalized eigenvector of $V(q_0)$ for the plus-mode ($V(q_0) \Vec{Y}_{1} = \lambda_{+}(q_0) \Vec{Y}_{1}$).\\

\noindent According to Definition \ref{defi:initialdata}, there exist a time \( t^{\flat} \in \mathrm{I} \) and a point $z^{\flat} \in \Sigma_{\text{nd}}$ such that $[t^{\flat} - \delta,\, t^{\flat} + \delta] \subset \mathrm{I}$ and
\[ z^{\flat} := \Phi_{+}^{t^{\flat}, t_0}(z_0) \in \Sigma_{\mathrm{nd}}, \quad \Phi_{+}^{t,t_0}(z_0) \notin \Sigma \text{ for } t \in \mathrm{I} \setminus \lbrace t^{\flat}\rbrace, \quad \alpha := |w(q^{\flat})| \leqslant \alpha_0 \quad \text{and} \quad r := \left| \mathrm{d}w(q^{\flat})\, p^{\flat} \right| > r_0.\]

\subsection{Drift}

\noindent When we start from the plus-mode, the conservation of energy can be expressed as follows
\begin{equation*}
    h_{-}(\widetilde{z}^{\flat}) = h_{+}(z^{\flat}) + \mathcal{O}(\varepsilon),
\end{equation*}
\noindent where scalar Hamiltonians $h_{\pm}$ are defined in \eqref{eq:scalarhamiltonien} and $\widetilde{z}^{\flat} = (q^{\flat}, p^{\flat} + \delta_{p^{\flat}})$. This leads to the following conditions on $\delta_{p^{\flat}}$
\begin{equation*}
    p^{\flat} \cdot \delta_{p^{\flat}} = 2\alpha \quad \text{and} \quad \delta_{p^{\flat}} = \mathcal{O}(\sqrt{\varepsilon}).
\end{equation*}

\noindent At this stage, a change of sign can already be observed with respect to the conditions specified in Equation \eqref{eq:defdrift}. We have previously established that our drift term is justified by Lemma \ref{prop:changetime}. In order to restate this lemma for the case where the transition occurs from the plus-mode to the minus-mode, we need to consider
\begin{itemize}[leftmargin=*, labelindent=0pt]
    \item the classical quantities $z_{+}$ and $S_+$ defined respectively in \eqref{eqclassicaltraj} and \eqref{Actions},
    \item the ``drifted" classical trajectory $\widetilde{z}_{-}$ solution to the following system
    \begin{equation}
    \label{eq:drifttrajannexe}
    \left\lbrace 
    \begin{array}{cll}
        \dot{\widetilde{q}}_{-} & = & \widetilde{p}_{-} \\
        \dot{\widetilde{p}}_{-} & = & -\nabla \lambda_{-}(\widetilde{q}_{-})\\
        (\widetilde{q}_{-}(t^{\flat}),\widetilde{p}_{-}(t^{\flat})) & = & \widetilde{z}_{\flat}
    \end{array}
    \right.
    \tag{B2}
    \end{equation}
    \item the associated ``drifted" action $\widetilde{S}_-$ defined by \begin{equation}
        \label{Actionstildeannexe}
        \widetilde{S}_{-}(t;t^{\flat},\widetilde{z}_{\flat}) = \ds\int_{t_0}^t (\lvert \widetilde{p}_{-}(s) \lvert^2 \, - \, h_{-}(\widetilde{z}_{-}(s))) \, \diff s.
        \tag{B3}
    \end{equation}
\end{itemize}
\noindent Therefore, we have the following lemma.

\begin{lemma} We consider \( M \) and \( r_0 \) two positive real numbers. There exists a constant $C > 0$ such that the following holds. For all $(\varepsilon,\delta, \alpha_0,R) \in (0,1) \times (0,1] \times \R_{+} \times [1, + \infty [ \) such that $\sqrt{\varepsilon} \delta^{-1} \leqslant 1$ and $\alpha_0 \leqslant \dfrac{R}{2} \sqrt{\varepsilon}$, for all compact interval $\mathrm{I} \subset \R$ with $t_0 = \text{min}(\mathrm{I})$ and for all $z_0 \in \mathcal{A}_{+}(M,\alpha_0,r_0,\delta,\mathrm{I})$, we have the following pointwise estimate
\begin{enumerate}[leftmargin=*, labelindent=0pt]
    \item If $s \to - \infty$ (i.e. $\varepsilon \to 0$, $\lvert t - t^{\flat} \lvert \, \leqslant \delta$ and $t < t^{\flat}$)
\begin{multline*}
\dfrac{i}{\varepsilon}  S_{+}(t;t_0,z_0) + \dfrac{i}{\varepsilon} p_{+}(t) \cdot (x - q_{+}(t)) = \dfrac{i}{\varepsilon} S_{+}(t^{\flat};t_0,z_0) + \dfrac{i}{\varepsilon} S_{0}(t;t^{\flat},z^{\flat}) + \dfrac{i}{\sqrt{\varepsilon}} p_0(t) \cdot y + i \Lambda^{\varepsilon,\alpha}(s, \eta(y)) \\ + i \Phi^{\varepsilon,\alpha}(\eta(y)) - \dfrac{i}{2} G_{\alpha}(t^{\flat} + \sqrt{\varepsilon}s) y \cdot y + \dfrac{i \alpha}{r \sqrt{\varepsilon}} \eta(y) \cdot \Vec{e}_{\theta} + \sigma
\end{multline*}
    \item If $s \to + \infty$ (i.e. $\varepsilon \to 0$, $\lvert t - t^{\flat} \lvert \, \leqslant \delta$ and $t > t^{\flat}$)
\begin{multline*}
\dfrac{i}{\varepsilon} S_{0}(t;t^{\flat},z^{\flat}) + \dfrac{i}{\sqrt{\varepsilon}} p_0(t) \cdot y = \dfrac{i}{\varepsilon}  S_{+}(t;t^{\flat},z^{\flat}) + \dfrac{i}{\varepsilon} p_{+}(t) \cdot (x - q_{+}(t)) + i \Lambda^{\varepsilon,\alpha}(s, \eta(y)) \\ + i \Phi^{\varepsilon,\alpha}(\eta(y)) - \dfrac{i}{2} G_{\alpha}(t^{\flat} + \sqrt{\varepsilon}s) y \cdot y - \dfrac{i \alpha}{r \sqrt{\varepsilon}} \eta(y) \cdot \Vec{e}_{\theta} +  \sigma \text{ and }
\end{multline*}
\begin{multline*}
\dfrac{i}{\varepsilon} S_{0}(t;t^{\flat},z^{\flat}) + \dfrac{i}{\sqrt{\varepsilon}} p_0(t) \cdot y = \dfrac{i}{\varepsilon} \widetilde{S}_{-}(t;t^{\flat},\widetilde{z}^{\flat}) + \dfrac{i}{\varepsilon} \widetilde{p}_{-}(t) \cdot (x - \widetilde{q}_{-}(t)) - \dfrac{i}{\sqrt{\varepsilon}} \delta_{p^{\flat}} \cdot y - i \Lambda^{\varepsilon,\alpha}(s, \eta(y)) \\ - i \Phi^{\varepsilon,\alpha}(\eta(y)) + \dfrac{i}{2} G_{\alpha}(t^{\flat} + \sqrt{\varepsilon}s) y \cdot y + \dfrac{i \alpha}{r \sqrt{\varepsilon}} \eta(y) \cdot \Vec{e}_{\theta} + \sigma.
\end{multline*}
\end{enumerate}

\noindent for $t = t^{\flat} + s \sqrt{\varepsilon}$; $y = \frac{x - q_0(t)}{\sqrt{\varepsilon}}$ ; $\eta(y) = \diff w(q^{\flat})y$; the phaes $\Lambda^{\varepsilon,\alpha}$ and $\Phi^{\varepsilon,\alpha}$ respectively given in Equations \eqref{phase} and \eqref{phase2}; $G_{\alpha}$ the matrix-valued function defined in \eqref{def:phase} and the matrix $\Gamma_1$ is explicitly given in ~\eqref{matGamma1}. Moreover, the remainder satisfies \[ \lvert \sigma \lvert \, \leqslant C \Big( \dfrac{1}{|s|} + \sqrt{\varepsilon} |s|^3 + \sqrt{\varepsilon} s^2 \lvert y \lvert \Big). \]
\end{lemma}

\noindent For the same reasons explained after Lemma \ref{prop:changetime} and an analysis of the sign changes between the plus-phase for $t < t^{\flat}$ and the minus-phase for $t > t^{\flat}$, we deduce that the appropriate drift in the case of a transition from the plus-mode to the minus-mode is $$\delta_{p^{\flat}} = \dfrac{2\alpha}{r} \transpose{\dint w(q^{\flat})} \Vec{e}_{\theta}.$$

\subsection{Adiabatic basis}

\noindent If the initial data of Equation \eqref{equation} satisfies \eqref{IDannexe}, we construct $\vec{Y}_+$ for $t < t^{\flat}$, solution to \eqref{eqVec}, with $\vec{Y}_+(t_0) = \vec{Y}_1$. Proposition \ref{prop:Vec} holds and according to the notations introduced in Subsection \ref{subsec:studyVec} for the eigenprojectors, we can set
\begin{equation}
\label{eq:ICvecafterannexe}
    \Vec{Y}^{\flat\perp} = \Pi_{+,0}^{-,\flat} \Bigg( \lim\limits_{t \to t^{\flat}, \, t < t^{\flat}} \vec{Y}_{+}(t) \Bigg) \quad \text{and} \quad \Vec{Y}^{\flat} = \begin{pmatrix}
            0 & -1 \\
            1 & 0
        \end{pmatrix} \Vec{Y}^{\flat\perp}.
        \tag{B4}
\end{equation}
\noindent In the same way as in Proposition \ref{prop:Vecaftercrossing}, for $t > t^{\flat}$, we construct $\vec{Y}_+$ and $\vec{\widetilde{Y}_-}$, solution to Equation \eqref{eqVec} with 
\begin{equation}
\label{yflatannexe}
    \vec{Y}_+(t^{\flat}) = \Pi_{+,\alpha}^{\flat} \Vec{Y}^{\flat\perp} \quad \text{and} \quad \vec{\widetilde{Y}_-}(t^\flat) = \Pi_{-,\alpha}^{\flat} \Vec{Y}^{\flat}
    \tag{B5}
\end{equation}
\noindent as initial contion at time $t^{\flat}$.

\subsection{Main results}

\noindent The following theorem adapts the proof of Theorem~\ref{theo:ingoingWP} and provides a description of the ingoing wave packet at time $t^{\flat} - \delta$.

\begin{theorem}[The ingoing wave packet] \label{ingoingbis} Let Assumptions \ref{hypsurV2} and \ref{hypcrossing} hold. We consider \( M, \, r_0 \) two positive real numbers. There exists a constant $C > 0$ such that the following holds. For all $(\varepsilon,\delta,\alpha_0,R,k) \in (0,1) \times (0,1] \times \R_{+} \times [1, + \infty [ \, \times \, \N \) such that $\sqrt{\varepsilon} \delta^{-1} \leqslant 1$ and $\alpha_0 \leqslant \dfrac{R}{2} \sqrt{\varepsilon}$, for all compact interval $\mathrm{I} \subset \R$ with $t_0 = \text{min}(\mathrm{I})$ and $z_0 \in \mathcal{A}_{+}(M,\alpha_0,r_0,\delta,\mathrm{I})$, the solution $\psi^{\varepsilon}$ of ~\eqref{equation}, with an initial data $\psi_0^{\varepsilon}$ as in \eqref{IDannexe}, satisfies \eqref{newinconnue} at time $t = t^{\flat} - \delta$ \Big( i.e. at time $- s_0 = - \dfrac{\delta}{\sqrt{\varepsilon}}$ \Big) with 
\begin{equation*}
    u^{\varepsilon}(-s_0,y) = e^{i \Lambda^{\varepsilon,\alpha}(-s_0,\eta(y))} \alpha_1^{\text{in}}(\eta(y)) \Vec{Y}_{\flat}^{\perp} \, +  \, \sigma(\varepsilon, \delta)  \quad \text{where } \| \sigma(\varepsilon, \delta) \|_{_{\widetilde{\Sigma}_{\varepsilon}^{k}}} \leqslant C (\sqrt{\varepsilon} \delta^{-1} + \varepsilon^{\frac{3}{2}} \delta^{-4} + \varepsilon^{-1} \delta^3)(1 + \lvert \ln \delta \lvert).
\end{equation*}
\noindent The vector $\Vec{Y}_{\flat}^{\perp}$ is introduced in Equation \eqref{yflatannexe}, $\eta(y) = \diff w(q^\flat)y$, the phase $\Lambda^{\varepsilon,\alpha}$ is defined by \eqref{phase} and 
\begin{multline*}
    \alpha_1^{\text{in}}(\eta(y)) = \text{Exp} \Bigg( \dfrac{i}{\varepsilon} S_{+}^{\flat} + \dfrac{i\alpha^2}{4r \varepsilon} - \dfrac{i \alpha^2}{2r \varepsilon} \ln\Big(\dfrac{\alpha}{2\sqrt{r\varepsilon}} \Big) - \dfrac{i}{2r} (\eta(y) \cdot \Vec{e}_{\theta})^2 - \dfrac{i \alpha}{r \sqrt{\varepsilon}} \eta(y) \cdot \Vec{e}_{\theta}^{\perp} \ln\Big(\dfrac{\alpha}{2\sqrt{r\varepsilon}} \Big)\\ - \dfrac{i}{2r} (\eta(y) \cdot \Vec{e}_{\theta}^{\perp})^2 \ln\Big(\dfrac{1}{2\sqrt{r\varepsilon}} \Big) + \dfrac{i}{2} \Gamma_1 y \cdot y \Bigg) \text{Exp} \Bigg(\dfrac{i \alpha}{r \sqrt{\varepsilon}} \eta(y) \cdot \Vec{e}_{\theta} \Bigg) u_{+}^{\text{in},\alpha}(y).
\end{multline*}
\noindent with $u_{+}^{\text{in},\alpha}$ a Schwartz function constructed in Proposition \ref{prop:ingoingprofiles} and defined from the solution $u_{+}$ of Equation $\eqref{eqProfil}$ with the initial condition $u_{+}(t_0) = \varphi$.
\end{theorem}

\noindent The final step is to describe the outgoing wave packet as in Theorem~\ref{theo:outgoingWP}. This is the purpose of the following theorem.

\begin{theorem}[The outgoing wave packet] Let Assumptions \ref{hypsurV2} and \ref{hypcrossing} hold. We consider \( M, \, r_0 \) two positive real numbers and $N_0 \in \N^{*}$. There exists a constant $C > 0$ such that the following holds. For all $(\varepsilon,\delta,\alpha_0,R,k) \in (0,1) \times (0,1] \times \, \R_{+} \times \, [1, + \infty[ \, \times \, \N \) satisfying conditions \eqref{eq:conditiontheo}, for all compact interval $\mathrm{I} \subset \R$ with $t_0 = \text{min}(\mathrm{I})$ and for all $z_0 \in \mathcal{A}_{+}(M,\alpha_0,r_0,\delta,\mathrm{I})$, the solution $\psi^{\varepsilon}$ of Equation \eqref{equation}, with an initial data $\psi_0^{\varepsilon}$ as in \eqref{IDannexe}, satisfies at time $t = t^{\flat} + \delta$
\begin{equation*}
\psi^{\varepsilon} = \psi^{\varepsilon}_{+} + \psi^{\varepsilon}_{-} + \sigma(\varepsilon, \delta, R, N_0)  
\end{equation*}
\noindent with for all $x \in \R^d$
\begin{equation*}
    \psi^{\varepsilon}_{-}(t^{\flat} + \delta,x) = e^{\frac{i}{\varepsilon} \widetilde{S}_{-}(t^{\flat} + \delta;t^{\flat},\widetilde{z}^{\flat})} \mathrm{WP}_{\widetilde{\Phi}_-^{t,t^{\flat}}(\widetilde{z}^{\flat})}^{\varepsilon} \Big(\mathrm{Exp} \Big( \frac{i}{2} G_{\alpha}(t^{\flat} + \delta) x \cdot x \Big) \widetilde{u}_{-}^{\text{out},\alpha}(x) \Big) \Vec{Y}_{\flat}^{\perp} + \mathcal{O} \Big( \varepsilon^{\frac{1}{14} - \beta} \Big)
\end{equation*}
\noindent and
\begin{equation*}
    \psi^{\varepsilon}_{+}(t^{\flat} + \delta,x) = e^{\frac{i}{\varepsilon} S_{+}(t^{\flat} + \delta;t^{\flat},z^{\flat})} \mathrm{WP}_{\Phi_+^{t,t^{\flat}}(z^{\flat})}^{\varepsilon} \Big(\mathrm{Exp} \Big( - \frac{i}{2} G_{\alpha}(t^{\flat} + \delta) x \cdot x \Big) u_{+}^{\text{out},\alpha}(x) \Big) \Vec{Y}_{\flat} + \mathcal{O} \Big( \varepsilon^{\frac{1}{14} - \beta} \Big)
\end{equation*}
\noindent where
\begin{itemize}[leftmargin=*, labelindent=0pt]
    \item the flow maps $\Phi_+^{t,t^{\flat}}(z^{\flat}) = (p_{+}, q_{+})$ and $\widetilde{\Phi}_-^{t,t^{\flat}}(\widetilde{z}^{\flat}) = (\widetilde{p}_{-}, \widetilde{q}_{-})$ are defined in \eqref{eqclassicaltraj} and \eqref{eq:drifttrajannexe},
    \item the actions $S_{+}(t;t^{\flat},z^{\flat})$ and $\widetilde{S}_{-}(t;t^{\flat},\widetilde{z}^{\flat})$ are introduced in \eqref{Actions} and \eqref{Actionstildeannexe},
    \item the vectors $\Vec{Y}_{\flat}$ and $\Vec{Y}_{\flat}^{\perp}$ are given in Equation \eqref{yflatannexe},
    \item the symmetric matrix-valued function $G_{\alpha}$ is explicitly given in \eqref{def:phase},
    \item the profiles $(u_{+}^{\text{out},\alpha},\widetilde{u}_{-}^{\text{out},\alpha})$ are Schwartz functions constructed using the ingoing profile $u_{+}^{in,\alpha}$ (given in Theorem \ref{ingoingbis}) defined for all $y \in \R^d$ by
    \begin{equation*}
    \begin{pmatrix}
        u_{+}^{\mathrm{out},\alpha}(y) \\
        \widetilde{u}_{-}^{\mathrm{out},\alpha}(y)
    \end{pmatrix}
    =
    \begin{pmatrix}
        e^{i\widetilde{\Lambda}^{\varepsilon,\alpha}(\mathrm{H}(y))} b\left( \frac{\mathrm{H}_2(y)}{\sqrt{r}} + + \frac{\alpha}{\sqrt{r\varepsilon}}\right)
        & 
        a\left( \frac{\mathrm{H}_2(y)}{\sqrt{r}} + \frac{\alpha}{\sqrt{r\varepsilon}} \right) \\
        a\left( \frac{\mathrm{H}_2(y)}{\sqrt{r}} + \frac{\alpha}{\sqrt{r\varepsilon}} \right)
        &
        -e^{-i\widetilde{\Lambda}^{\varepsilon,\alpha}(\mathrm{H}(y))} \overline{b}\left( \frac{\mathrm{H}_2(y)}{\sqrt{r}} + \frac{\alpha}{\sqrt{r\varepsilon}} \right)
    \end{pmatrix}
    \begin{pmatrix}
    e^{\frac{i}{\varepsilon} S_{+}(t^{\flat};t_0,z_0)} u_{+}^{\mathrm{in},\alpha}(y)\\
    0
    \end{pmatrix},
\end{equation*}
with $\mathrm{H}(y) = \Big( \diff w(q^{\flat}) y \cdot \vec{e}_{\theta} , \diff w(q^{\flat}) y \cdot \vec{e}_{\theta}^{\perp} \Big)$, the phase $\widetilde{\Lambda}^{\varepsilon,\alpha}$ defined in \eqref{phase}and the functions $a$ and $b$ given in \eqref{fonctionaetb},
    \item \( \| \sigma(\varepsilon, \delta, R, N_0)  \|_{_{\Sigma_{\varepsilon}^{k}}} \leqslant C \Bigg( (\sqrt{\varepsilon} \delta^{-1} + \varepsilon^{\frac{3}{2}} \delta^{-4} + \delta + R \varepsilon^{-1} \delta^3 + R^3 \sqrt{\varepsilon} \delta^{-1} + \varepsilon^{-1} \delta^3 + R^{-N_0})(1 + \lvert \ln \delta \lvert) \Bigg).\)
\end{itemize}    
\end{theorem}

\section{Additional results on classical trajectories}
\label{appendix:classicalq}

\noindent We use the notations introduced throughout the paper. This appendix provides some relevant properties of the classical trajectories $z_{\pm}$ defined in \eqref{eqclassicaltraj}. All of these properties also hold for the trajectories $\widetilde{z}_{\pm}$, which are solutions of \eqref{eqclassicaltrajdrift}.

\begin{prop}
\label{lem:controlCQ}
Let \( M > 0 \), \(r_0 > 0 \) , \( \delta \in\, (0,1] \), and \( \mathrm{I} \subset \R \) a compact interval such that \(t_0 = \text{min}(\mathrm{I}) \). For all \( z_0 \in \mathcal{A}_{\pm}(M,r_0,\delta, \mathrm{I}) \), there exists a constant \( C > 0 \) such that the function \( t \in \mathrm{I} \mapsto \Phi_{\pm}^{t,t_0}(z_0) \) satisfies the following estimate
\[ |q_{\pm}(t) | + |p_{\pm}(t)| \leqslant C, \quad \forall t \in \mathrm{I}.\]
\end{prop}

\begin{proof}[Proof of Proposition \ref{lem:controlCQ}]
    Since $z_0 \in \mathcal{A}_{\pm}(M,r_0,\delta, \mathrm{I})$, the functions $(q_{\pm}, p_{\pm})$ belong to $\mathcal{C}^{\infty}(\mathrm{I}, \mathbb{R}^{2d})$ and satisfy $\dot{q}_{\pm} = p_{\pm}$. Therefore, we have
\[\ddot{q}_{\pm} = \dot{p}_{\pm} = -\nabla \lambda_{\pm}(q_{\pm}).\]

\noindent Multiplying this equation by $\dot{q}_{\pm}$ and integrating from $t_0$ to any $t \in \mathrm{I}$, we obtain the conservation of energy
\[\frac{1}{2} |\dot{q}_{\pm}(t)|^2 + \lambda_{\pm}(q_{\pm}(t)) = \frac{1}{2} |q_0|^2 + \lambda_{\pm}(q_0).\]

\noindent Using the reverse triangle inequality, we deduce that for all $t \in \mathrm{I}$
\[\frac{1}{2} |\dot{q}_{\pm}(t)|^2 - |\lambda_{\pm}(q_{\pm}(t))| \leq \left| \frac{1}{2} |q_0|^2 + \lambda_{\pm}(q_0) \right|.\]

\noindent Thus, we have
\[\frac{1}{2} |\dot{q}_{\pm}(t)|^2 \leq |\lambda_{\pm}(q_{\pm}(t))| + |C_{1}|, \quad \text{with } C_{1} = \left| \frac{1}{2} |q_0|^2 + \lambda_{\pm}(q_0) \right|.\]

\noindent  According to Assumption \ref{hypsurV2}, the eigenvalues \(\lambda_{\pm}\) grow at most quadratically. It follows that there exists a constant $C_{2} > 0$ such that
\[|\lambda_{\pm}(q_{\pm})| \leq C_{2} (1 + |q_{\pm}|^2).\]

\noindent Therefore, for all $t \in \mathrm{I}$,
\begin{equation}
\label{majqpoint}
    |\dot{q}_{\pm}(t)|^2 \leqslant C_{3} + C_{2} |q_{\pm}(t)|^2, \quad \text{with } C_{3} = |C_{1}| + C_{2} > 0.
\end{equation}
 Using Gronwall's lemma, it follows that for all $t \in \mathrm{I}$,
\[|q_{\pm}(t)|^2 \leqslant C_{4} (1 + e^{C_{2} |t|}) \leqslant C_5 , \quad \text{with } C_4, C_5 > 0.\]

\noindent Since $\dot{q}_{\pm}$ satisfies \eqref{majqpoint} and $\dot{q}_{\pm} = p_{\pm}$, it follows that
\[ |p_{\pm}(t)| \leqslant C_{6}, \quad \text{with } C_6 > 0 \text{ and for all } t \in \mathrm{I}\]
\noindent and the proposition holds.
\end{proof}

\begin{corollary}
    \label{coro:deriveeCQ} Under the same assumptions as in Proposition \ref{lem:controlCQ}, we consider the following function $f_{\pm} : t \in \mathrm{I} \mapsto w(q_{\pm}(t))$. For $n \in [\![ 0, 2 ]\!]$, there exists $C> 0$ such that \[ \| f_{\pm}^{(n)} \|_{\infty, \mathrm{I}} \leqslant C. \]
\end{corollary}

\begin{proof}[Proof of Corollary \ref{coro:deriveeCQ}]
    This is obtained by applying the previous proposition, computing the first two derivatives of $f_{\pm}$, and using the fact that the function $t \in \mathrm{I} \mapsto \nabla \lambda_{\pm}(q_{\pm}(t))$ is bounded.
\end{proof}

%%%%%%%%%%%%%%%%%%%%%%%%%%%%%%%%%%%%%%%%%%%%%%%%%%%%%%%%%%%%
\newpage
\bibliography{bibliographie}
\bibliographystyle{plain}

\end{document}